\documentclass[12pt]{amsart} 
\usepackage{amsmath, amssymb,graphics}
\usepackage[colorlinks=true]{hyperref}
\usepackage{fullpage}
 
\newcommand{\bbC}{\mathbb{C}}
\newcommand{\bbK}{\mathbb{K}}
\newcommand{\bbQ}{\mathbb{Q}}
\newcommand{\bbR}{\mathbb{R}}
\newcommand{\bbZ}{\mathbb{Z}}

\newcommand{\cC}{{\mathcal{C}}}
\newcommand{\cD}{{\mathcal{D}}}
\newcommand{\cN}{{\mathcal{N}}}
\newcommand{\cO}{{\mathcal{O}}}

\newcommand{\Chr}{\mathrm{Chr}}

\DeclareMathOperator{\core}{sf}
\DeclareMathOperator{\Gal}{Gal}

\DeclareMathOperator{\Imag}{Im}
\DeclareMathOperator{\Real}{Re}

\newtheorem{theorem}{Theorem}[section]

\newtheorem{lemma}[theorem]{Lemma}

\theoremstyle{remark}
\newtheorem*{note-nonum}{Note}
\newtheorem{remark}{Remark}

\numberwithin{equation}{section}

\begin{document}

\title[Improved Constants for the Hypergeometric Method]{Improved Constants for Effective Irrationality Measures from Hypergeometric Functions}

\author{Paul M. Voutier}
\address{London, UK \\Paul.Voutier@gmail.com}

\begin{abstract}
We simplify and improve the constant, $c$, that appears in
effective irrationality measures,
\[
\left| (a/b)^{m/n}-p/q \right|>c|q|^{-(\kappa+1)},
\]
obtained
from the hypergeometric method for $a/b$ near $1$. The dependence of $c$ on
$|a|$ in our result is best possible (as is the dependence on $n$ in many cases).
For some applications, the dependence of this constant on $|a|$ becomes important.
We also establish some new inequalities for hypergeometric
functions that are useful in other diophantine settings.
\end{abstract}

\subjclass[2010]{11J82, 11J68, 33C05}
\keywords{Diophantine Approximation, Effective Irrationality Measures, Hypergeometric Functions}

\maketitle

\section{Introduction}

Hypergeometric functions have played an important role in addressing diophantine
problems since the work of Thue. It was Siegel \cite{Si1} who first recognised
that the functions Thue used were hypergeometric functions. Siegel also
refined Thue's ideas and used hypergeometric functions himself. For example, he
used them to investigate the integer solutions of Thue equations involving binomial
forms (i.e., $ax^{n}-by^{n}=c$).
This work was developed further by Evertse \cite{Ev} and others, most notably by
Bennett \cite{Ben1}.

Baker \cite{Bak} was the first to show that hypergeometric functions can also be
used to obtain effective irrationality measures for rational powers of certain
rational numbers (although see Section~3.5 of \cite{FN} for how close Thue \cite{Thue2}
came to establishing some such results nearly 50 years earlier when he obtained
explicit upper bounds for the size of solutions of some Thue inequalities of the
form $\left| aq^{r}-bp^{r} \right| \leq k$). For example, he proved that
\[
\left| 2^{1/3} - \frac{p}{q} \right| > \frac{10^{-6}}{|q|^{2.955}},
\]
for all integers $p$ and $q$ with $q \neq 0$. Since then, his technique has
been improved, notably through Chudnovsky's analysis of denominators of the
coefficients of the associated hypergeometric functions \cite{Chud}. 

There is also a generalisation of the ordinary hypergeometric method developed
by Baker, known as Thue's
Fundamentaltheorem (from the title of his paper on it \cite{Thue1}). It can apply to cases not
covered by the former.

In previous work
\cite{Vout2, Vout3, Vout4}, we simplified the statement of Thue's
Fundamentaltheorem and investigated the conditions under which it yields
effective irrationality measures for algebraic numbers.
The focus in these papers was primarily on the irrationality exponent.
But for some problems, it can also be important to have good values for the constant
term too ($c$ in Theorem~\ref{thm:gen-hypg2} below), in particular a good dependence
on the quantity $a$ in Theorem~\ref{thm:gen-hypg2}. We consider that in this paper.

Furthermore, we obtain some lower bounds for the hypergeometric functions involved,
as well as lower bounds for their denominators (where appropriate). These have
played an important role in some forthcoming works of the author, so hopefully
they will be helpful for other diophantine problems and perhaps even in other areas too.

\section{Results}
\label{sect:results}

To present our results, we begin with some notation.
For relatively prime positive integers $m$ and $n$ with $0<m<n/2$ and $n \geq 3$,
and a non-negative integer $r$, we put
\[
X_{m,n,r}(z) = {} _{2}F_{1}(-r,-r-m/n;1-m/n;z)
\hspace*{3.0mm} \text{and} \hspace*{3.0mm} Y_{m,n,r}(z)=z^{r}X_{m,n,r}(1/z),
\]
where $_{2}F_{1}$ denotes the classical hypergeometric function. The condition
$m<n/2$, rather than $m<n$, poses no real restriction, and is necessary for the
proof of Lemma~\ref{lem:hypg-LB}.

Since $-r$ is a non-positive integer, $X_{m,n,r}(z), Y_{m,n,r}(z) \in \bbQ[z]$.
We let $D_{m,n,r}$ be a positive integer such that $D_{m,n,r} X_{m,n,r}(z) \in \bbZ[z]$.

For a non-negative integer $r$ and non-zero $d \in \bbZ$, we let $N_{d,m,n,r}$
be a positive integer such that $\left( D_{m,n,r}/ N_{d,m,n,r} \right)X_{m,n,r}\left( 1-\sqrt{d}\,z \right)
\in \bbZ \left[ \sqrt{\core(d)} \right] [z]$.
Here $\core(d)$ is the unique squarefree integer
such that $d/\core(d)$ is a square, with $\core(1)=1$.

This fixes a notational error in \cite{Vout4} (fixed in arXiv link provided),
although the proofs and the results there are correct and unaffected. This is
also a slight improvement on the definition of what
should be denoted as $N_{m,d,n,r}$ in \cite{Vout4}, affecting only the constant
in our results. In practice, when applying our results below in $\bbQ \left( \sqrt{t} \right)$,
we will take $d$ as a suitable square multiple of $t$. In this way, the sequence
of approximations we obtain in the course of the proof will be algebraic integers
in $\bbQ \left( \sqrt{t} \right)$, as required. This explains the choice of $d$
in Theorem~\ref{thm:gen-hypg2}.

We will use
$v_{p}(x)$ to denote the exponent of the largest power of a prime $p$ which divides into the rational number $x$. We put
\begin{equation}
\label{eq:ndn-defn}
\cN_{d,n} =\prod_{p|n} p^{\min(v_{p}(d)/2, v_{p}(n)+1/(p-1))},
\end{equation}
and choose real numbers $\cC_{n} \geq 1$ and $\cD_{n}>0$ such that
\begin{equation}
\label{eq:cndn-defn}
\max_{\stackrel{0<m<n/2}{\gcd(m,n)=1}} \left( \max \left( 1, \frac{\Gamma(1-m/n) \, r!}{\Gamma(r+1-m/n)},
\frac{n\Gamma(r+1+m/n)}{m\Gamma(m/n)r!} \right)
\frac{D_{m,n,r}}{N_{d,m,n,r}} \right)
< \cC_{n} \left( \frac{\cD_{n}}{\cN_{d,n}} \right)^{r}
\end{equation}
holds for all non-negative integers $r$, where $\Gamma(x)$ is the Gamma function.
This condition on $\cC_{n}$ and $\cD_{n}$ also corrects the one given in \cite{Vout4}.

In what follows, for $z$ not on the negative real line, when we take
a root of $z$, we mean the principal value of the root. I.e., writing $z=se^{i\varphi}$,
where $s$ is a non-negative real number and $-\pi<\varphi \leq \pi$ (with $\varphi=0$
when $s=0$), $z^{1/n}$ will signify $s^{1/n}e^{i\varphi /n}$ for a positive integer $n$,
where $s^{1/n}$ is the unique non-negative real $n$-th root of $s$.

\begin{theorem}
\label{thm:gen-hypg2}
Let $\bbK$ be an imaginary quadratic field with $m$ and $n$ as above.
Let $a$ and $b$ be algebraic integers in $\bbK$ with
either $0<b/a<1$ a rational number or $|b/a|=1$ with $0<|b/a-1|<1$. Let
$\cC_{n}$, $\cD_{n}$ and $\cN_{d,n}$ be as above with $d=(a-b)^{2}$. Put
\begin{eqnarray*}
E      & = & \frac{\cN_{d,n}}{\cD_{n}} \left\{ \min \left( \left| \sqrt{a}-\sqrt{b} \right|, \left| \sqrt{a}+\sqrt{b} \right| \right) \right\}^{-2}, \\
Q      & = & \frac{\cD_{n}}{\cN_{d,n}} \left\{ \max \left( \left| \sqrt{a}-\sqrt{b} \right|, \left| \sqrt{a}+\sqrt{b} \right| \right) \right\}^{2}, \\
\kappa & = & \frac{\log Q}{\log E} \hspace{3.0mm} \text{ and }\\
c      & = & 3|a|\cC_{n} \left( 20\cC_{n} \right)^{\kappa} \max \left( n, \cN_{d,n}^{\kappa} \right).
\end{eqnarray*}

If $E > 1$, then 
\[
\left| (a/b)^{m/n} - p/q \right| > \frac{1}{c |q|^{\kappa+1}} 
\]
for all algebraic integers $p$ and $q$ in $\bbK$ with $q \neq 0$. 
\end{theorem}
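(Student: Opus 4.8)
The plan is to run the classical hypergeometric (Thue–Siegel) method, using the polynomials $X_{m,n,r}$ and $Y_{m,n,r}$ to manufacture a dense sequence of exceptionally good rational (or $\bbK$-rational) approximations to $(a/b)^{m/n}$, and then to convert this into a lower bound for $|(a/b)^{m/n}-p/q|$ in the standard way. The starting point is the Padé-type identity for the hypergeometric function: writing $\omega = (b/a)^{m/n}$ (or the appropriate algebraic number determined by the principal root), one has
\[
X_{m,n,r}(z) - \omega\, Y_{m,n,r}(z) = c_{m,n,r}\, (1-z)^{2r+1}\, R_{m,n,r}(z),
\]
where $R_{m,n,r}$ is the remainder hypergeometric function $_2F_1(r+1-m/n, r+1; 2r+2; 1-z)$ up to normalization, and $c_{m,n,r}$ is an explicit constant built from Gamma values. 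Evaluating at $z = b/a$ (in the rational case) or at the point forced by $|b/a|=1$, and clearing the denominators $D_{m,n,r}$ and the ramified part $N_{d,m,n,r}$ using the hypothesis $d=(a-b)^2$, produces a sequence $p_r/q_r$ of approximations whose numerators and denominators are algebraic integers in $\bbK$.

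The key steps, in order, are: (i) establish the Padé identity above and identify $R_{m,n,r}$; (ii) obtain the \emph{upper} bound for the "large'' linear form — i.e.\ bound $|q_r|$ from above by $\cC_n (\cD_n/\cN_{d,n})^r \max(|\sqrt a-\sqrt b|,|\sqrt a+\sqrt b|)^{2r}$ up to the factor $|a|^{\text{something}}$ — using the defining inequality \eqref{eq:cndn-defn} for $\cC_n,\cD_n$ together with the substitution $1-z = \sqrt d\, z'$ that brings in $\cN_{d,n}$; this is exactly where $Q$ and the factor $\cN_{d,n}^\kappa$ come from. (iii) Obtain the \emph{lower} bound for the same quantities and an \emph{upper} bound for the remainder $|X_{m,n,r}(z)-\omega Y_{m,n,r}(z)|$, controlled by $(\cN_{d,n}/\cD_n)^{-r}\min(|\sqrt a-\sqrt b|,|\sqrt a+\sqrt b|)^{2r}$ — this is where $E>1$ enters and gives the geometric decay. (iv) Check the non-degeneracy/linear-independence step: that $p_r q_{r+1} - p_{r+1} q_r \neq 0$ so that among $p_r/q_r, p_{r+1}/q_{r+1}$ at least one differs from a given $p/q$; this uses the Wronskian-type relation for the hypergeometric polynomials. (v) Assemble: given $p/q$, choose $r$ so that $|q_r|$ is comparable to $|q|$, apply the triangle inequality in the usual "gap'' argument
\[
\left|\omega' - \frac{p}{q}\right| \ge \frac{1}{|q q_r|} - \left|\omega' - \frac{p_r}{q_r}\right|,
\]
and balance the two terms; the exponent $\kappa = \log Q/\log E$ and the constant $c$ fall out of this optimization, with the factors $3$, $20$, $\max(n,\cN_{d,n}^\kappa)$ tracking the crude numerical slack in the inequalities and the denominator bounds. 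The role of $|a|$ is linear because $\omega' = (a/b)^{m/n}$ and its conjugate scale like $|a|$, and that single power of $|a|$ propagates through to $c$.

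The main obstacle I expect is step (ii)–(iii): getting the arithmetic of the denominators exactly right so that the constant $c$ has the clean form stated, with the \emph{best-possible} linear dependence on $|a|$ and the asserted dependence on $n$. The naive approach would clear denominators too generously and introduce spurious factors of $n^r$ or $\cN_{d,n}^{-r}$ in the wrong places; the point of the carefully engineered quantity $\cN_{d,n}$ in \eqref{eq:ndn-defn} (involving $\min(v_p(d)/2, v_p(n)+1/(p-1))$) is precisely to capture the extra cancellation coming from the substitution $z \mapsto 1-\sqrt d\, z$, and one must verify that $N_{d,m,n,r}$ really does absorb this. A secondary technical point is handling the two cases $0<b/a<1$ and $|b/a|=1$ uniformly: in the second case $\sqrt a \pm \sqrt b$ are genuinely complex and one must be careful that $\min$ and $\max$ of the two moduli are what controls the size of the evaluated hypergeometric polynomials, which is why the hypotheses are phrased in terms of $|\sqrt a - \sqrt b|$ and $|\sqrt a + \sqrt b|$ rather than $|b/a|$ alone. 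Once these denominator and modulus estimates are nailed down, the rest is the routine pigeonhole/gap argument.
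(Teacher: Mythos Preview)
Your outline is the standard hypergeometric-method skeleton and matches the paper's overall shape, but it misses the one genuinely new idea that produces the stated constant $c$. In your step~(iv)--(v) you plan to use the Wronskian relation to pass from $r$ to $r+1$ when $p/q=p_r/q_r$, and then run the usual gap argument. That is exactly what the paper avoids: doing it that way costs an extra factor of $Q$ in $c$ (see the Note after Lemma~\ref{lem:approx}), and more importantly it does not explain the shape $3|a|\cC_n(20\cC_n)^{\kappa}\max(n,\cN_{d,n}^{\kappa})$. The paper instead treats the case $p/q=p_r/q_r$ \emph{separately}, by proving a \emph{lower} bound
\[
\left| q_r(a/b)^{m/n} - p_r \right| > \frac{|a-b|}{60\,n\,|a|\,|q_r|}
\]
(Lemma~\ref{lem:thm2-8}), which immediately gives $|\theta-p_r/q_r|>1/(60n|a||q_r|^{\kappa+1})$; this is where the factor $n|a|$ (and hence the $\max(n,\cdot)$) actually comes from. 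Establishing that lower bound is the real work and requires three ingredients you do not mention: a lower bound $\ge 1$ for the remainder hypergeometric function ${}_2F_1(r+1-m/n,r+1;2r+2;1-z)$ on the relevant arc (Lemma~\ref{lem:7}), a lower bound for $|X_{m,n,r}(z)|$ obtained by locating all its zeros on the negative real axis via Jacobi-polynomial zero counts (Lemma~\ref{lem:hypg-LB}), and a lower bound for the arithmetic denominators $D_{m,n,r}$ (Lemmas~\ref{lem:denom-LB1}--\ref{lem:denom-LB2}). With that case disposed of, Lemma~\ref{lem:approx} is applied only to $p/q\neq p_r/q_r$, giving a constant \emph{independent of $|a|$}; the final $c$ is then the maximum of the two.

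Relatedly, your explanation of the linear $|a|$-dependence is wrong: $(a/b)^{m/n}$ has modulus essentially $1$ under either hypothesis, so nothing ``scales like $|a|$''. The $|a|$ enters solely through the lower bound on the remainder just described, via the factor $(a-b)/a$ in the Pad\'e identity.
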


\begin{note-nonum}
The dependence on both $a$ and $n$ in $c$ is required. For example, if $n$ is an odd
integer, $a$ is a large positive integer and $b=a-1$, then the $0$-th convergent,
$p_{0}/q_{0}$, in the continued fraction expansion of $(a/b)^{1/n}$ is $1$
and the next partial quotient is $na-(n+3)/2$. So
$\left| (a/b)^{1/n} - p_{0}/q_{0} \right|$ is approximately $1/ \left( na \left| q_{0} \right|^{2} \right)$.
%
Similar results hold for other small-index convergents too.

An examination of the continued-fraction expansions of such numbers, suggests
that $O(|a|n)$ is the right size for $c$ for any value of $\kappa$ likely
to be obtained in the near-future. We obtain such a value here in cases that
commonly arise in applications like $a-b=1$, $(a-b,n)=1, \ldots$, when $\cN_{d,n}=1$,
so $c=3\cC_{n}\left( 20\cC_{n} \right)^{\kappa}|a|n$.
%
\end{note-nonum}

To make our theorem easier to use, we provide values for $\cC_{n}$ and $\cD_{n}$.
Since it is sometimes useful to have smaller values of $\cC_{n}$, we also give
$\cD_{2,n}$, the smallest calculated value of $\cD_{n}$ that allows us to take
$\cC_{n}=100$. For large $n$, $\cC_{1,n}<100$ for our choice of $\cD_{1,n}$.
For such $n$, we put $\cD_{2,n}=\cD_{1,n}$.

It is known (see Theorem~4.3 in \cite{Chud}) that $D_{m,n,r}$ has the
asymptotic behaviour
\[
\overline{\lim_{r \rightarrow \infty}} \frac{\log D_{m,n,r}}{r} \leq (\Chr)_{n}^{2},
\]
where
\[
(\Chr)_{n}^{2} = \frac{\pi}{\varphi(n)} \sum_{\substack{j=1\\(j,n)=1}}^{\lfloor n/2 \rfloor} \cot (\pi j/n).
\]

\begin{theorem}
\label{thm:dn-UB}
{\rm (a)} If $3 \leq n \leq 100$, then we can take
\[
\left( \cC_{n}, \cD_{n} \right) = \left( \cC_{1,n}, \cD_{1,n} \right)
\text{ or } \left( 100, \cD_{2,n} \right),
\]
where $\cC_{1,n}$, $\cD_{1,n}$ and $\cD_{2,n}$ are in Tables~$\ref{table:1}$--$\ref{table:3}$.

{\rm (b)} If $101 \leq n \leq 1009$ is prime and we consider only $m=1$ in \eqref{eq:cndn-defn},
then we can take
\[
\left( \cC_{n}, \cD_{n} \right) = \left( \cC_{1,n}, \cD_{1,n} \right)
\text{ or } \left( 100, \cD_{2,n} \right),
\]
where $\cC_{1,n}$, $\cD_{1,n}$ and $\cD_{2,n}$ are in Tables~$\ref{table:4}$--$\ref{table:7}$.

{\rm (c)} Otherwise, let $d_{1}=\gcd \left( d, n^{2} \right)$ and $d_{2}=\gcd \left( d/d_{1}, n^{2} \right)$.
If $d_{2}=1$, then
\[
\left( \cC_{n}, \cD_{n} \right) = \left( n, n \mu_{n} \right),
\]
where $\mu_{n}=\displaystyle\prod_{\stackrel{p|n}{p \, \, \mathrm{ prime}}} p^{1/(p-1)}$.
\end{theorem}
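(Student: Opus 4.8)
The plan is to reduce \eqref{eq:cndn-defn} to an effective upper bound for $D_{m,n,r}$ together with a lower bound for $N_{d,m,n,r}$ that already absorbs the factor $\cN_{d,n}^{r}$ on the right. Writing out ${}_{2}F_{1}(-r,-r-m/n;1-m/n;z)$, the coefficient of $z^{k}$ in $X_{m,n,r}(z)$ is $\binom{r}{k}$ times $\prod_{j=0}^{k-1}(n(r-j)+m)$ divided by $\prod_{j=1}^{k}(nj-m)$; since $\gcd(m,n)=1$, every prime dividing such a denominator is coprime to $n$, so $D_{m,n,r}$ and hence $N_{d,m,n,r}$ are coprime to $n$. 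A Pfaff transformation writes $X_{m,n,r}(1+w)$ as an explicit finite sum in which the coefficient of $w^{l}$, after clearing a denominator coprime to $n$, is divisible by $n^{\,r-l}$ (the $p$-adic estimate for $p\mid n$ reducing, after a cancellation inside the sum, to the integrality of a multinomial coefficient). Since the coefficient of $z^{l}$ in $X_{m,n,r}(1-\sqrt{d}\,z)$ is the coefficient of $w^{l}$ in $X_{m,n,r}(1+w)$ times $(-\sqrt{d})^{l}$, and $(-\sqrt{d})^{l}$ contributes $l\,v_{p}(d)/2$ to the $p$-adic valuation for $p\mid n$, the uniform lower bound over $l$ is $r\min(v_{p}(n),v_{p}(d)/2)$. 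When $d_{2}=1$ — equivalently $v_{p}(d)\le 2v_{p}(n)$ for all $p\mid n$ — this minimum is $v_{p}(d)/2$, so one may take $N_{d,m,n,r}$ to be at least $\cN_{d,n}^{\,r}$ (up to a bounded factor absorbed later by $\cC_{n}$). Hence $D_{m,n,r}/N_{d,m,n,r}\le D_{m,n,r}/\cN_{d,n}^{\,r}$, and \eqref{eq:cndn-defn} with $(\cC_{n},\cD_{n})=(n,n\mu_{n})$ follows once we show $D_{m,n,r}\cdot\max(1,A_{m,n,r},B_{m,n,r})\le n\,(n\mu_{n})^{r}$ for every $r\ge 0$, where $A_{m,n,r}=\Gamma(1-m/n)r!/\Gamma(r+1-m/n)$ and $B_{m,n,r}=n\Gamma(r+1+m/n)/(m\Gamma(m/n)r!)$.

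The two remaining bounds come from standard but quantitative estimates. For the Gamma ratios, $0<m/n<1/2$ gives $A_{m,n,r}=\prod_{k=1}^{r}k/(k-m/n)\le\prod_{k=1}^{r}2k/(2k-1)<\sqrt{\pi(r+1)}$, and a similar Wallis-type argument bounds $B_{m,n,r}=\prod_{k=1}^{r}(1+m/(nk))$; both grow only like $\sqrt{r}$. For the denominator, I would make the estimate behind Theorem~4.3 of \cite{Chud} effective: splitting the primes dividing $D_{m,n,r}$ at $\sqrt{n(r+1)}$ and using the interval/Legendre count to measure, for each prime $p\nmid n$, the net contribution of $\prod_{j=1}^{k}(nj-m)$ after cancellation against $\prod_{j=0}^{k-1}(n(r-j)+m)$ and $\binom{r}{k}$, one obtains $\log D_{m,n,r}\le (\Chr)_{n}^{2}\,r+E(r)$ with an explicit $E(r)=o(r)$.

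To finish part (c), one checks that $(\Chr)_{n}^{2}<\log(n\mu_{n})$, with a uniformly positive gap $\delta_{n}$, for every $n$ outside the ranges covered by the tables: the cotangent sum defining $(\Chr)_{n}^{2}$ is, by the harmonic asymptotics, at most $\log n$ minus a positive absolute constant for large prime $n$, while for other $n$ the contribution $\sum_{p\mid n}\log p/(p-1)=\log\mu_{n}$ of the small prime factors more than compensates any excess of $(\Chr)_{n}^{2}$ over $\log n$. Given this, $D_{m,n,r}\le C_{0}\,e^{(\Chr)_{n}^{2}r}\le C_{0}\,e^{-\delta_{n}r}(n\mu_{n})^{r}$, so $D_{m,n,r}\max(1,A_{m,n,r},B_{m,n,r})\le C_{0}\,e^{-\delta_{n}r}\sqrt{\pi(r+1)}\,(n\mu_{n})^{r}$, and $C_{0}\,e^{-\delta_{n}r}\sqrt{\pi(r+1)}\le n$ for all $r\ge 0$ once the finitely many small $r$ are checked by hand; this is \eqref{eq:cndn-defn} with $(\cC_{n},\cD_{n})=(n,n\mu_{n})$. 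Parts (a) and (b) are handled in the same framework by computation: for each $n$ in the stated range (taking only $m=1$ in (b)) one computes the coefficients of $X_{m,n,r}(z)$, hence $D_{m,n,r}$, the Gamma ratios, and, via the valuation formula above, the dependence on $d$ of $N_{d,m,n,r}$ and $\cN_{d,n}$, for all $r$ up to a threshold $r_{0}(n)$, verifying \eqref{eq:cndn-defn} directly; for $r>r_{0}(n)$ the effective bound on $\log D_{m,n,r}$ shows the tabulated exponential rate dominates and the tabulated constant ($\cC_{1,n}$, respectively $100$) absorbs the sub-exponential remainder. Tables~\ref{table:1}--\ref{table:6} record the outcome, with $\cD_{1,n}$ taken as small as possible for a workable $\cC_{1,n}$ and $\cD_{2,n}$ the smallest rate compatible with $\cC_{n}=100$.

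The step I expect to be the main obstacle is the effective denominator estimate: Chudnovsky's asymptotic is not enough by itself, and one must carry an explicit, $m$-uniform error term through the prime-by-prime analysis — in particular controlling primes $p$ with $p^{i}$ close to $nr$, where the cancellation between the numerator product $\prod(n(r-j)+m)$ and the denominator product $\prod(nj-m)$ is delicate — and then confirming that for $p\mid n$ the valuation gained from $z\mapsto 1-\sqrt{d}\,z$ actually reaches $r\,v_{p}(d)/2$, which is exactly where the hypothesis $d_{2}=1$ enters.
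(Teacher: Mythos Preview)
Your sketch for parts~(a) and~(b) matches the paper's framework: compute the left-hand side of~\eqref{eq:cndn-defn} directly for $r$ below a threshold $r_{\mathrm{comp}}$, and use analytic upper bounds beyond it. The paper is more explicit---it splits $D_{m,n,r}$ into contributions from primes $\le\sqrt{nr}$ (bounded via Rosser--Schoenfeld) and primes $>\sqrt{nr}$ (bounded via the explicit estimates for $\theta(x;n,k)$ in~\cite{Ben2}, supplemented by interval-by-interval computation up to $X_n$)---but the shape is the same.

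The gap is in part~(c). You propose to reach $\cD_n=n\mu_n$ by first proving an effective Chudnovsky bound $\log D_{m,n,r}\le(\Chr)_n^2\,r+E(r)$, then exploiting the gap $\delta_n=\log(n\mu_n)-(\Chr)_n^2>0$, and finally ``checking finitely many small $r$ by hand''. This cannot yield $\cC_n=n$ uniformly in $n$: making Chudnovsky's rate effective produces enormous constants---the tables for parts~(a) and~(b) record $\cC_{1,n}$ up to $6\cdot10^{56}$ even with $\cD_{1,n}$ taken slightly \emph{above} $e^{(\Chr)_n^2}$---and your ``small $r$'' check has no uniform meaning over the infinitely many $n$ that part~(c) must cover. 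The paper avoids Chudnovsky entirely here. It quotes the elementary inequality
\[
\max\!\left(1,\ \frac{\Gamma(1-m/n)\,r!}{\Gamma(r+1-m/n)},\ \frac{n\,\Gamma(r+1+m/n)}{m\,\Gamma(m/n)\,r!}\right)D_{m,n,r}\le(n\mu_n)^r,
\]
valid for every $r\ge0$ and proved by a direct $p$-adic count in Lemma~7.4(d) of~\cite{Vout2}, and combines it with~\eqref{eq:numerbnd-1}, which under $d_2=1$ collapses to $\cN_{d,n}^{\,r}/N_{d,m,n,r}\le d_1^{\,r/2-\lfloor r/2\rfloor}\le\sqrt{d_1}\le n$. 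The point you are missing is that $n\mu_n$ is the \emph{crude} rate, well above $e^{(\Chr)_n^2}$, and is reached directly from the hypergeometric coefficients without any of Chudnovsky's cancellation analysis; routing through $(\Chr)_n^2$ and then giving ground is both harder and, as stated, not uniform.
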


Since there are $\varphi(n)/2$ values of $m$ to consider for each value of $n$,
the work required to continue part~(a) for larger values of $n$ soon becomes prohibitive.
It is for this reason that we restrict to considering only $m=1$ for $101 \leq n \leq 1009$,
and also only consider $n$, prime, in this interval. Certainly for $n$ this large,
prime values of $n$ are the most important ones.

We stop at $n=1009$ only for the rather arbitrary reason that it is the smallest
prime greater than $1000$. In theory, using
Lemma~\ref{lem:bennett}, one could extend part~(b) to $n<1289$, as well as obtain
smaller values of $\cD_{n}$ in parts~(a) and (b).

\vspace*{1.0mm}

Before turning to the proof of these theorems, we also mention here some results obtained
in the course of the proof that may be of use to other researchers.\\
$\bullet$ Lemma~\ref{lem:approx}
improves on previous versions of this ``folklore lemma'' that is crucial for
obtaining effective irrationality measures from sequences of good approximations.
Here we use efficiently the $0$-th element in the sequence of good approximations
to replace the usual lower bound on $|q|$ with a (typically weak) condition on $\ell_{0}$
and $E$.\\
$\bullet$ Lemma~\ref{lem:7} provides a new lower bound for the hypergeometric
functions arising in analysis of the quality of our sequence of good
approximations. Moreover, it is best-possible where the hypergeometric method is
applicable.\\
$\bullet$ Lemma~\ref{lem:hypg-LB} provides a new lower bound for the hypergeometric
functions used in the construction of our sequence of good approximations.\\
$\bullet$ Lemma~\ref{lem:denom-LB1} provides a lower bound for the denominators of
the hypergeometric functions used in the construction of our sequence of good approximations.
It has the correct dependence on $n$.

\section{Preliminary Results}

The following lemma is used to obtain an effective approximation measure for
a complex number $\theta$ from a sequence of good approximations in an
imaginary quadratic field.

\begin{lemma}
\label{lem:approx}
Let $\theta \in \bbC$ and let $\bbK$ be an imaginary quadratic
field. Suppose that for all non-negative integers $r$, there are algebraic integers
$p_{r}$ and $q_{r}$ in $\bbK$ satisfying $p_{r}q_{r+1} \neq p_{r+1}q_{r}$ with
$\left| q_{r} \right|<k_{0}Q^{r}$ and $\left| q_{r}\theta-p_{r} \right| \leq \ell_{0}E^{-r}$,
for some real numbers $k_{0},\ell_{0} > 0$ and $E,Q > 1$ with $2\ell_{0}E \geq 1$.
Then for any algebraic integers $p$ and $q$ in $\bbK$ with $q \neq 0$ and
$p/q \neq p_{r}/q_{r}$ for all
non-negative integers $r$, we have
\[
\left| \theta - \frac{p}{q} \right| > \frac{1}{c |q|^{\kappa+1}},
\text{ where } c=2k_{0} \left( 2\ell_{0}E \right)^{\kappa}
\text{ and } \kappa = \frac{\log Q}{\log E}.
\]
\end{lemma}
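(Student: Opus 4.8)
The plan is to run the standard ``gap principle plus triangle inequality'' argument, but tuned so that the lower bound on $|q|$ that usually appears is replaced by the hypothesis $2\ell_0 E \geq 1$. First I would fix an algebraic integer quotient $p/q$ with $q \neq 0$ and $p/q \neq p_r/q_r$ for all $r$, and assume for contradiction that $|\theta - p/q| \leq 1/(c|q|^{\kappa+1})$ with $c = 2k_0(2\ell_0 E)^{\kappa}$. I would then choose the index $r$ as the unique non-negative integer with
\[
E^{r-1} \leq 2\ell_0 |q| \cdot \frac{2k_0 Q^{r}}{\text{(something)}} \quad\Longleftrightarrow\quad E^{r} < 2k_0 \ell_0 E Q^{r}|q| \leq E^{r+1}\,,
\]
i.e. essentially $r = \big\lfloor \log(2k_0\ell_0 E|q|)/\log(E/Q)\big\rfloor$ rearranged so that $Q^{r}/ (E^{-r})$ straddles a multiple of $|q|$; the condition $2\ell_0 E \geq 1$ together with $|q| \geq 1$ (a non-zero algebraic integer in an imaginary quadratic field has $|q|\geq 1$) guarantees that the relevant quantity is $\geq 1$, so that a non-negative such $r$ exists and $r=0$ is available when $|q|$ is small. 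This is the place where the paper's stated improvement lives, and it is the step I expect to require the most care: getting the inequalities to chain correctly so that no separate ``$|q|$ large'' hypothesis is needed.

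Next I would use the non-vanishing $p_r q_{r+1} \neq p_{r+1} q_r$ to get the Liouville-type inequality
\[
1 \leq |p_r q - p q_r| = |q(q_r\theta - p_r) - q_r(q\theta - p)| \leq |q|\cdot \ell_0 E^{-r} + |q_r|\cdot |q\theta - p|\,,
\]
valid because $p_r q - p q_r$ is a non-zero algebraic integer in $\bbK$, hence of absolute value $\geq 1$; here one must check $p_r q - p q_r \neq 0$, which follows from $p/q \neq p_r/q_r$. Then substituting $|q\theta - p| = |q|\cdot|\theta - p/q| \leq |q|/(c|q|^{\kappa+1}) = 1/(c|q|^{\kappa})$ and $|q_r| < k_0 Q^{r}$, I obtain
\[
1 \leq \ell_0 |q| E^{-r} + \frac{k_0 Q^{r}}{c|q|^{\kappa}}\,.
\]
The choice of $r$ is designed precisely so that both terms on the right are at most $1/2$: the first term is $\leq 1/2$ because $E^{r} \geq 2\ell_0|q|$ by the lower part of the defining inequality for $r$, and the second term is $\leq 1/2$ because $Q^{r} \leq E^{\kappa r} \cdot(\text{const})$ combined with $c = 2k_0(2\ell_0 E)^{\kappa}$ and the upper part of the defining inequality, using $\kappa = \log Q/\log E$ so that $Q^{r} = E^{\kappa r}$. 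Summing the two bounds gives $1 \leq 1/2 + 1/2$, but in fact one of the inequalities is strict (the one coming from the strict inequality $|q_r| < k_0 Q^{r}$, or from strictness in the choice of $r$), yielding $1 < 1$, a contradiction. Hence $|\theta - p/q| > 1/(c|q|^{\kappa+1})$, as claimed.

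The bookkeeping to watch: (i) confirming $|q| \geq 1$ for non-zero algebraic integers in an imaginary quadratic field, so the existence of $r \geq 0$ is unconditional given $2\ell_0 E \geq 1$; (ii) handling the edge case where $|q|$ is so small that the natural index would be negative, forcing $r = 0$, and checking the argument still closes there (this is exactly where $2\ell_0 E \geq 1$ is used rather than a hypothesis $|q| \geq |q|_0$); and (iii) tracking which inequality is strict to upgrade ``$\leq$'' to ``$>$'' in the conclusion. None of these is deep, but arranging the exponents in the definition of $r$ so that the two error terms are simultaneously controlled is the crux, and I would write that inequality out explicitly as $2k_0\ell_0 E \cdot (Q/E)^{r} \leq |q|^{-1}\cdot(\dots)$ before committing to signs.
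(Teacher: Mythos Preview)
Your approach is correct and essentially the same as the paper's: both use the Liouville inequality $|p_r q - p q_r| \geq 1$ for non-zero algebraic integers in $\bbK$, together with a choice of index $r$ that makes $\ell_0|q|E^{-r} < 1/2$, and then bound $|q_r| < k_0 Q^r$ using $Q^r = E^{\kappa r} \leq (2\ell_0 E|q|)^{\kappa}$. The paper argues directly rather than by contradiction, but that is cosmetic.

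One point worth tightening: your displayed defining inequality for $r$, namely $E^{r} < 2k_0 \ell_0 E Q^{r}|q| \leq E^{r+1}$, is garbled --- it mixes $Q^r$ into the bracket in a way that makes the existence of $r$ depend on the sign of $\log(E/Q)$, and the constant $k_0$ should not appear at all. The clean choice (which is what the paper uses) is simply
\[
r_0 = \left\lfloor \frac{\log(2\ell_0|q|)}{\log E} \right\rfloor + 1 \quad\text{if } 2\ell_0|q|\geq 1, \qquad r_0 = 0 \text{ otherwise},
\]
so that $E^{r_0} > 2\ell_0|q|$ and $E^{r_0} \leq 2\ell_0 E|q|$ (the latter using $2\ell_0 E|q| \geq 1$ in the edge case). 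From there $Q^{r_0} = E^{\kappa r_0} \leq (2\ell_0 E|q|)^{\kappa}$ drops out with no further work, and your two ``$\leq 1/2$'' terms follow immediately. Everything else in your outline --- the use of $|q|\geq 1$, the handling of the $r_0=0$ edge case via $2\ell_0 E \geq 1$, and the strictness coming from $|q_r| < k_0 Q^r$ --- matches the paper.
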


\begin{note-nonum}
This is Lemma~6.1 in \cite{Vout2} with two changes. It has an improved value of
$c$ due to the additional condition that $p/q \neq p_{r}/q_{r}$ for all non-negative
integers $r$. We have also replaced the lower bound on $|q|$ with a lower bound
for $2\ell_{0}E$.

If we remove the restriction that $p/q \neq p_{r}/q_{r}$ for all non-negative
integers $r$, then the lemma still holds, but with $c$ above replaced by
$c=2k_{0}Q \left( 2\ell_{0}E \right)^{\kappa}$.
\end{note-nonum}

\begin{proof}
The proof is quite similar to that of Lemma~6.1 in \cite{Vout2}.

Let $p$, $q$ be algebraic integers in $\bbK$. If $|q| \geq 1/\left( 2\ell_{0} \right)$,
put
$\displaystyle r_{0} = \left\lfloor \frac{\log(2\ell_{0}|q|)}{\log E} \right\rfloor + 1$.
If $|q|<\left( 2\ell_{0} \right)$, then put $r_{0}=0$.
Note that in the first case, since $E>1$ and $2\ell_{0}|q| \geq 1$, we have
$r_{0} \geq 1$.

In the first case, it follows that $0 \leq \log \left( 2\ell_{0}|q| \right)/\log(E) < r_{0}$.
Hence, for all $r \geq r_{0}$,
\[
\ell_{0}E^{-r} < \ell_{0}E^{-(\log(2\ell_{0}|q|))/(\log E)} = 1/(2|q|) < 1,
\]
since $E>1$.

When $r_{0}=0$, then for all $r \geq r_{0}$,
\[
\ell_{0}E^{-r} \leq \ell_{0} <1/(2|q|) < 1,
\]
since $E>1$ and every non-zero algebraic integer in $\bbK$ has absolute value at least $1$.

In both cases, we have
\begin{equation}
\label{eq:approx1}
\ell_{0}E^{-r} < 1/(2|q|) < 1,
\end{equation}
for all $r \geq r_{0}$.

If we have $q_{r}=0$ for some $r \geq r_{0}$, then from \eqref{eq:approx1},
$\left| p_{r} \right| = \left| q_{r} \theta - p_{r} \right| < \ell_{0}E^{-r}<1$,
which implies that $p_{r}=0$ (again, using the fact that all non-zero algebraic
integers in these fields are of absolute value at least $1$). This contradicts
the supposition that $p_{r}q_{r+1} \neq p_{r+1}q_{r}$.
Therefore, $q_{r} \neq 0$ for all $r \geq r_{0}$. 

So, for any $r \geq r_{0}$ with $p/q \neq p_{r}/q_{r}$, we have 
\begin{equation}
\label{eq:approx2}
\left| \theta - \frac{p}{q} \right| 
\geq \left| \frac{p_{r}}{q_{r}} - \frac{p}{q} \right| 
     - \left| \theta - \frac{p_{r}}{q_{r}} \right| 
\geq \frac{1}{|qq_{r}|} - \frac{\ell_{0}}{E^{r}|q_{r}|}
> \frac{1}{2|qq_{r}|},
\end{equation}
again using \eqref{eq:approx1} and the fact that $p_{r}q-q_{r}p$ is a non-zero
algebraic integer and hence of absolute value at least $1$ in such fields.

If $|q| \geq 1 / \left( 2 \ell_{0} \right)$, then the choice of $r_{0}$ yields
\begin{equation}
\label{eq:approx3}
Q^{r_{0}} \leq \exp \left( \frac{\log (2\ell_{0}|q|) + \log (E)}{\log(E)} \log (Q) \right)
= \left( 2E\ell_{0}|q| \right)^{\kappa}.
\end{equation}

If $|q| < 1 / \left( 2 \ell_{0} \right)$, so that $r_{0}=0$, then the same upper
bound holds for $Q^{r_{0}}=1$ by our assumption that $2E\ell_{0} \geq 1$ and hence
that $2E\ell_{0}|q| \geq 1$, since $q \neq 0$ implies that $|q| \geq 1$.

Combining \eqref{eq:approx2} and \eqref{eq:approx3} with our upper bound in the
lemma for $\left| q_{r_{0}} \right|$, we have
\[
\left| \theta - \frac{p}{q} \right| 
> \frac{1}{2|q q_{r_{0}}|} 
\geq \frac{1}{2|q|k_{0}Q^{r_{0}}} 
\geq \frac{1}{2k_{0}(2E\ell_{0})^{\kappa}|q|^{\kappa+1}},
\]
when $p/q \neq p_{r_{0}}/q_{r_{0}}$.
\end{proof}

For any non-negative integer, $r$, let
\begin{equation}
\label{eq:Rmnr-defn}
R_{m,n,r}(z) = \frac{(m/n) \cdots (r+m/n)}{(r+1) \cdots (2r+1)} 
	       {} _{2}F_{1} \left( r+1-m/n, r+1; 2r+2; 1-z \right).
\end{equation}

The next lemma contains the relationship that allows the hypergeometric method to provide
good sequences of rational approximations.

\begin{lemma}
\label{lem:relation}
Let $m,n$ and $r$ be non-negative integers with $0<m<n$ and $\gcd(m,n)=1$.
If $z$ is any complex number with $|z| \leq 1$ and $|z-1|<1$, then
\begin{equation}
\label{eq:approx}
Y_{m,n,r}(z) - (1/z)^{m/n}X_{m,n,r}(z) = z^{-m/n}(z-1)^{2r+1} R_{m,n,r}(z). 
\end{equation}
\end{lemma}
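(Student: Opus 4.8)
The plan is to verify the identity \eqref{eq:approx} as an identity between formal power series in $(z-1)$ (equivalently, in a neighbourhood of $z=1$), since both sides are analytic there for $|z|\le 1$, $|z-1|<1$ once we fix the principal branch of $z^{-m/n}$. Writing $w=1-z$, I would expand all three ingredients in powers of $w$: the term $X_{m,n,r}(z)={}_2F_1(-r,-r-m/n;1-m/n;1-w)$, the term $Y_{m,n,r}(z)=z^r X_{m,n,r}(1/z)$, and the factor $z^{-m/n}=(1-w)^{-m/n}$ multiplying ${}_2F_1(r+1-m/n,r+1;2r+2;w)$ on the right-hand side. The natural tool is one of the classical contiguous/connection formulas for ${}_2F_1$ — specifically the fact that the three functions ${}_2F_1(-r,-r-m/n;1-m/n;z)$, $z^r\,{}_2F_1(-r,-r-m/n;1-m/n;1/z)$, and $(z-1)^{2r+1}$ times a ${}_2F_1$ in argument $1-z$ span the solution space of a single second-order hypergeometric ODE (after the substitution that sends the equation for parameters $(-r,-r-m/n;1-m/n)$ to standard form). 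Concretely, I would identify \eqref{eq:approx} as an instance of the Gauss relation connecting solutions around the singular points $z=0$, $z=1$, $z=\infty$ of the hypergeometric equation with exponents governed by $m/n$ and $-r$.

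The cleanest route is probably this: recognise that $X_{m,n,r}$ is (up to the Rodrigues-type normalisation) the Jacobi-style polynomial appearing in Padé approximation to $(1-z)^{m/n}$, so that \eqref{eq:approx} is exactly the statement that $Y_{m,n,r}(z)-(1/z)^{m/n}X_{m,n,r}(z)$ is the Padé remainder, which is known to equal a constant times $(z-1)^{2r+1}$ times a ${}_2F_1$ with the shifted parameters in \eqref{eq:Rmnr-defn}. I would pin down the leading coefficient: the left-hand side vanishes to order exactly $2r+1$ at $z=1$ (the defining Padé order condition — $X$ and $Y$ have degree $r$, giving $2r+2$ free coefficients minus normalisation), and matching the coefficient of $(z-1)^{2r+1}$ on both sides forces the prefactor $(m/n)\cdots(r+m/n)/\big((r+1)\cdots(2r+1)\big)$ in $R_{m,n,r}$. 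So the structure of the proof is: (i) show LHS has a zero of order $\ge 2r+1$ at $z=1$; (ii) show LHS, divided by $z^{-m/n}(z-1)^{2r+1}$, is a polynomial-type / hypergeometric expression of the right shape by checking it satisfies the ODE with the shifted parameters, or by a direct series computation; (iii) fix the normalising constant by comparing the lowest surviving Taylor coefficient at $z=1$.

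The main obstacle I anticipate is step (i) — proving that the difference $Y_{m,n,r}(z)-z^{-m/n}X_{m,n,r}(z)$ actually vanishes to order $2r+1$ at $z=1$, rather than some smaller order. This is where the specific choice of the parameter $1-m/n$ in the lower entry of the ${}_2F_1$ and the fact that we use the \emph{same} polynomial $X_{m,n,r}$ in both terms (via the $z\mapsto 1/z$ symmetry defining $Y$) really matter; it is essentially the Hermite–Padé orthogonality of $X_{m,n,r}$ against powers of $z$ up to degree $r$. I would establish this either by an Euler integral representation — writing $z^{-m/n}X_{m,n,r}(z)-Y_{m,n,r}(z)$ as a contour or Beta-type integral of $(z-1)^{2r+1}$ against a weight, which makes the order of vanishing manifest — or, if one prefers to stay purely formal, by invoking the known contiguous relations among ${}_2F_1$'s to rewrite the left-hand side and read off the factor $(z-1)^{2r+1}$ directly. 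Once the order of vanishing and the normalising constant are in hand, the remaining identification of the quotient with the ${}_2F_1(r+1-m/n,r+1;2r+2;1-z)$ in \eqref{eq:Rmnr-defn} is a routine match of hypergeometric series coefficients, valid in the stated region $|z|\le 1$, $|z-1|<1$ where all series converge and the branch of $z^{-m/n}$ is unambiguous.
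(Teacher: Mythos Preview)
Your proposal is correct but takes a genuinely different route from the paper. The paper's proof consists entirely of a citation: it observes that \eqref{eq:approx} is equation~(4.2) of Chudnovsky \cite{Chud} (with $\nu=m/n$) multiplied through by $(1/z)^{m/n}$, and says nothing further. You, by contrast, propose to derive the identity from first principles --- via the Pad\'e-approximation interpretation of $X_{m,n,r}$ (order of vanishing $2r+1$ at $z=1$), the hypergeometric ODE/connection formulae, or an Euler integral representation --- and then match the normalising constant. Your outline is sound: these are exactly the standard ways such Pad\'e remainder identities are established, and in particular the Euler integral route makes the factor $(z-1)^{2r+1}$ manifest while simultaneously producing the ${}_{2}F_{1}(r+1-m/n,r+1;2r+2;1-z)$ with the correct Pochhammer prefactor. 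What the paper's approach buys is brevity (the work is outsourced to \cite{Chud}); what your approach buys is self-containment and, arguably, more insight into why the specific parameters $(-r,-r-m/n;1-m/n)$ force the high-order vanishing. Either is acceptable here, since the lemma is a classical fact and the paper treats it as such.
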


\begin{proof}
This is a slight variation on equation (4.2) in \cite{Chud} with $\nu=m/n$.
We multiply that equation by $(1/z)^{m/n}$ to obtain \eqref{eq:approx}.

The reason for this change is that we have an easy upper bound for
$\left| X_{m,n,r}(z) \right|$ when $0<b/a<1$ is a real number, so we will
use $X_{m,n,r}(z)$ to define our $q_{r}$ in Lemma~\ref{lem:approx}.
\end{proof}

\begin{lemma}
\label{lem:7}
Let $a$ and $b$ be positive real numbers with $b<2a$.
If $|z|=1$ and $|z-1|<1$, then we have
\begin{equation}
\label{eq:LB}
\left| {} _{2}F_{1} \left( a, b; 2a; 1-z \right) \right| \geq 1,
\end{equation}
with the minimum value occurring at $z=1$.
\end{lemma}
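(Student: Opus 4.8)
The plan is to use a quadratic transformation to carry the argument $1-z$ (which lies on the circle $|w-1|=1$) onto the non‑positive real axis, and then to estimate the transformed hypergeometric function by an Euler integral in which the triangle inequality points in the useful direction.

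The key observation is that, writing $a=r+1-\nu$, $b=r+1$, $c=2r+2$, we have $c=2b$, so the classical quadratic transformation
\[
{}_2F_1(a,b;2b;w)=\left(1-\frac{w}{2}\right)^{-a}{}_2F_1\!\left(\frac{a}{2},\,\frac{a+1}{2};\,b+\frac12;\,\left(\frac{w}{2-w}\right)^{2}\right)
\]
applies. With $w=1-z$ one has $1-w/2=(1+z)/2$ and $w/(2-w)=(1-z)/(1+z)$, so
\[
{}_2F_1(r+1-\nu,\,r+1;\,2r+2;\,1-z)=\left(\frac{1+z}{2}\right)^{-(r+1-\nu)}{}_2F_1\!\left(A,\,A+\tfrac12;\,C;\,\left(\frac{1-z}{1+z}\right)^{2}\right),
\]
where $A=(r+1-\nu)/2$ and $C=2A+\nu+\tfrac12=r+\tfrac32$; here the hypothesis $0<\nu<1$ guarantees $A>0$. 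For $z=e^{i\phi}$ with $\Real(z)\ge0$, so $|\phi|\le\pi/2$, one computes $(1+z)/2=e^{i\phi/2}\cos(\phi/2)$ with $\cos(\phi/2)>0$, and $(1-z)/(1+z)=-i\tan(\phi/2)$, whence $\bigl((1-z)/(1+z)\bigr)^{2}=-\tan^{2}(\phi/2)=:-x\le0$. Since $\bigl|(1+z)/2\bigr|^{-(r+1-\nu)}=\cos(\phi/2)^{-2A}=(1+x)^{A}$, and since $F(x):={}_2F_1(A,A+\tfrac12;C;-x)$ is real, the bound \eqref{eq:LB} will follow as soon as we show
\[
F(x)={}_2F_1\!\left(A,\,A+\tfrac12;\,C;\,-x\right)\ \ge\ (1+x)^{-A}\qquad(x\ge0),
\]
with strict inequality for $x>0$: multiplying by $(1+x)^{A}$ gives $\bigl|{}_2F_1(r+1-\nu,r+1;2r+2;1-z)\bigr|\ge1$, with equality precisely when $x=0$, i.e.\ $z=1$.

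To prove the displayed inequality I would use Euler's integral with $A$ in the exponent of the variable factor: since $\Real(C)>\Real(A+\tfrac12)>0$ and $C-A-\tfrac12=A+\nu>0$,
\[
F(x)=\frac{\Gamma(C)}{\Gamma\!\left(A+\tfrac12\right)\Gamma(A+\nu)}\int_{0}^{1}t^{\,A-1/2}(1-t)^{\,A+\nu-1}(1+xt)^{-A}\,dt .
\]
In particular the integrand is positive, so $F(x)>0$. Because $A>0$ and $0\le xt\le x$ on $[0,1]$ we have $(1+xt)^{-A}\ge(1+x)^{-A}$, strictly on $[0,1)$; pulling the constant $(1+x)^{-A}$ out of the integral and using $\int_{0}^{1}t^{\,A-1/2}(1-t)^{\,A+\nu-1}\,dt=\Gamma\!\left(A+\tfrac12\right)\Gamma(A+\nu)/\Gamma(C)$ leaves exactly $(1+x)^{-A}$, and the resulting inequality is strict once $x>0$.

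Two routine technical points remain. The quadratic transformation is a priori valid only near $w=0$, so I would extend it to the whole arc $\{|z|=1,\ \Real(z)\ge0\}$ by analytic continuation: this arc is connected, contains $z=1$, and on it all the quantities involved stay away from the relevant branch cuts (in particular $\Real\bigl((1+z)/2\bigr)\ge\tfrac12$), while the transformed argument $-x$ lies in $[-1,0]$, inside the region where ${}_2F_1(A,A+\tfrac12;C;\cdot)$ is analytic. I expect the only real obstacle to be the conceptual one: a direct approach fails, since estimating an Euler integral for ${}_2F_1(r+1-\nu,r+1;2r+2;1-z)$ by the triangle inequality bounds its modulus from \emph{above}, not below; the role of the identity $c=2b$ is exactly to replace the problem by one with a real, non‑positive argument, for which the elementary pointwise bound $(1+xt)^{-A}\ge(1+x)^{-A}$ can be integrated against a positive probability measure.
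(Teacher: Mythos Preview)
Your proof is correct, and it takes a genuinely different route from the paper's.

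The paper works directly with an integral representation: it writes ${}_2F_1(a,b;c;1-z)$ as a Pochhammer-type integral over $(0,\infty)$, then deforms the contour to the ray $\{\tau\sqrt{z}:\tau>0\}$. Using $c=2a$ (after swapping the first two parameters, so that $a=r+1$), the factors $(t+w)^{-\beta}(t+\bar w)^{-\gamma}$ collapse to $|t+w|^{-2\beta}$, a positive real integrand which is pointwise bounded below by $(t+|w|)^{-2\beta}$; this yields the comparison with $z=1$.

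You exploit the same structural feature $c=2b$ but via the quadratic transformation rather than a contour shift, reducing to ${}_2F_1(A,A+\tfrac12;C;-x)$ at a non-positive real argument, where the ordinary Euler integral on $[0,1]$ and the monotonicity of $(1+xt)^{-A}$ in $t$ give the bound immediately. Your argument is arguably more elementary---no path deformation, no choice of branch along a ray---and gives the equality case cleanly. The paper's approach, by contrast, is set up to generalize (as noted in the Remark following the lemma) to ${}_2F_1(a,b;c;1-z)$ with $a,b>0$ and $c\geq\max(a,b)$ and to all $|z|\leq1$, situations where no quadratic transformation is available; in that sense it buys extra reach at the price of a more delicate contour argument. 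For the lemma as stated, though, both are complete.
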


\begin{remark}
In fact, \eqref{eq:LB} appears to hold much more generally. Numerical experiments
suggest the following is true.
For all $z \in \bbC$ with
$|z| \leq 1$, $|1-z|<1$ and all $a,b,c \in \bbR$ satisfying $a,b>0$ and $\max (a,b)<c$,
we have $\left| {}_{2} F_{1} \left( a,b; c; 1-z \right) \right| \geq 1$.
\end{remark}

\begin{proof}
We proceed more generally initially.

Using Pochhammer's integral (see equation~(1.6.6) of \cite{Sl}), along with the
transformation $t=1/s$, we can write
\begin{align*}
{}_{2} F_{1} \left( a,b; c; z \right)
&=\frac{\Gamma(c)}{\Gamma(b)\Gamma(c-b)} \int_{1}^{\infty} (s-1)^{c-b-1}s^{a-c}(s-z)^{-a}ds \\
&=\frac{\Gamma(c)}{\Gamma(b)\Gamma(c-b)} \int_{0}^{\infty} s^{c-b-1}(s+1)^{a-c}(s+1-z)^{-a}ds,
\end{align*}
provided that $|z|<1$, $\Real(c-b)>0$ and $\Real(b)>0$.

Therefore, we can write
\[
{}_{2} F_{1} \left( a,b; c; 1-z \right)
=\frac{\Gamma(c)}{\Gamma(b)\Gamma(c-b)} \int_{0}^{\infty} s^{c-b}(s+1)^{a-c}(s+z)^{-a}ds/s
\]
for $|1-z|<1$ and our problem becomes one of showing that the absolute value of the function
\begin{equation}
\label{eq:hypg-integral1}
\int_{0}^{\infty} t^{\alpha} (t+1)^{-\beta}(t+z)^{-\gamma}\frac{dt}{t}
\end{equation}
with $\alpha, \beta, \gamma>0$ and $\beta+\gamma>\alpha$ attains its minimum for
$|z|=1$ with $\Real(z) \geq 0$ at $z=1$.

Note that here we have $\alpha=c-b$, $\beta=c-a$ and $\gamma=a$.

We can change the integration path to any path that avoids the singularities of
the integrand in \eqref{eq:hypg-integral1}, i.e., any path that stays in the open
angle bounded by the rays $\{-\tau z:\tau>0\}$ and $\{-\tau:\tau>0\}$ containing
the positive semi-axis. So we will change it to the ray $\{\tau\sqrt{z}: \tau>0\}$
(here, as elsewhere, we use the principal value of the square root).
Thus the integral in \eqref{eq:hypg-integral1} becomes
\[
\int_{0}^{\infty} \left( \sqrt{z} t \right)^{\alpha}
\left( \sqrt{z}t+1 \right)^{-\beta} \left( \sqrt{z}t+z \right)^{-\gamma}
\frac{dt}{t}
=
z^{(\alpha-\beta-\gamma)/2}
\int_{0}^{\infty} t^{\alpha}
\left( t+1/\sqrt{z} \right)^{-\beta} \left( t+z/\sqrt{z} \right)^{-\gamma}
\frac{dt}{t}.
\]

Putting $w=1/\sqrt{z}$ and recalling that $|z|=1$, we have
\[
\left| z^{(\alpha-\beta-\gamma)/2}
\int_{0}^{\infty} t^{\alpha}
\left( t+1/\sqrt{z} \right)^{-\beta} \left( t+z/\sqrt{z} \right)^{-\gamma}
\frac{dt}{t} \right|
=
\left| \int_{0}^{\infty} t^{\alpha}
\left( t+w \right)^{-\beta} \left( t+wz \right)^{-\gamma}
\frac{dt}{t} \right|,
\]
so the problem is reduced to establishing the following:\\
for $w',z' \in \bbC$ with $\Real(w'), \Real(z')>0$ and $w'z' \in \bbR_{+}$, show
\begin{equation}
\label{eq:hypg-integral2}
\left| \int_{0}^{\infty} t^{\alpha}(t+w')^{-\beta}(t+z')^{-\gamma}\frac {dt}t \right|
\geq \left| \int_{0}^{\infty} t^{\alpha}(t+|w'|)^{-\beta}(t+|z'|)^{-\gamma}\frac {dt}t\right|.
\end{equation}

We now establish \eqref{eq:hypg-integral2} in the case of interest to us here.

Since $c=2a$, we have $\beta=\gamma$. By the definition above of $w$, we take
$w'=w=1/\sqrt{z}$ and $z'=zw=z/\sqrt{z}=\sqrt{z}$ in \eqref{eq:hypg-integral2}.
From the hypotheses that $|z|=1$ and $|1-z|<1$, it follows that $\Real(w'), \Real(z')>0$
and $w'z'=1 \in \bbR_{+}$. The integrand on the left-hand side of
\eqref{eq:hypg-integral2} is positive, since
$\left( t+w' \right)^{-\beta} \left( t+z' \right)^{-\gamma}
=\left( t^{2}+ \left( w'+z' \right) t + w'z' \right)^{-\gamma}
=\left( t^{2}+ 2\Real(w') t + 1 \right)^{-\gamma}$ and $0<2\Real(w')$.
That integrand is also greater
than the one on the right-hand side, since $2\Real(w') \leq |w'|+|z'|$. Hence
\eqref{eq:hypg-integral2} holds in this case.

Since the right-hand side of \eqref{eq:hypg-integral2} here is
\[
\left| \int_{0}^{\infty} t^{\alpha}(t+1)^{-\beta}(t+1)^{-\gamma}\frac {dt}t \right|,
\]
from Pochhammer's integral above, we see that the minimum value of 
$\left| {} _{2}F_{1} \left( a, b; 2a; 1-z \right) \right|$ occurs at $z=1$, where
it is equal to $1$.
\end{proof}

\begin{lemma}
\label{lem:hypg-UB}
Let $m,n$ and $r$ be non-negative integers with $0<m<n$ and $\gcd(m,n)=1$.

\noindent
{\rm (a)} If $|z|=1$ and $|z-1|<1$, then
\[
\left| X_{m,n,r}(z) \right|
< 1.072\frac{r!\Gamma(1-m/n)}{\Gamma(r+1-m/n)} \left| 1 + \sqrt{z} \right|^{2r}.
\]

\noindent
{\rm (b)} If $z \in \bbR$ with $0 \leq z \leq 1$, then
\[
\left| X_{m,n,r}(z) \right| < \left| 1 + \sqrt{z} \right|^{2r}.
\]
\end{lemma}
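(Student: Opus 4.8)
The plan is to start from the explicit power series for $X_{m,n,r}(z)={}_{2}F_{1}(-r,-r-m/n;1-m/n;z)$, which terminates because $-r$ is a non-positive integer, and to bound it term-by-term. Writing
\[
X_{m,n,r}(z)=\sum_{k=0}^{r}\frac{(-r)_{k}\,(-r-m/n)_{k}}{(1-m/n)_{k}\,k!}\,z^{k},
\]
the key observation is that $(-r)_{k}=(-1)^{k}\binom{r}{k}k!$, so the $k$-th coefficient has absolute value $\binom{r}{k}\cdot|(-r-m/n)_{k}|/|(1-m/n)_{k}|$. For part~(b), where $0\le z\le1$ is real and all the coefficients are in fact positive once the sign $(-1)^{k}$ is absorbed, I would show that the ratio $|(-r-m/n)_{k}|/|(1-m/n)_{k}|=\prod_{j=0}^{k-1}\frac{r+m/n-j}{\,|1-m/n+j|\,}$ is bounded above by a product that is at most, say, $1$ term-by-term against $\binom{r}{k}z^{k}$ versus $\binom{2r}{k}$ — more precisely I expect the clean route is to recognise $X_{m,n,r}$ as (a rescaling of) a Jacobi polynomial and to use the fact that for $0\le z\le 1$ the polynomial is dominated by its value structure, giving $|X_{m,n,r}(z)|\le\sum_{k}\binom{2r}{k}z^{k}\le(1+\sqrt z)^{2r}$ after comparing $\binom{r}{k}$-type coefficients to binomial coefficients of $(1+\sqrt z)^{2r}=\sum_{j}\binom{2r}{j}z^{j/2}$. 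The inequality $\left|1+\sqrt z\right|^{2r}=(1+\sqrt z)^{2r}$ here is just the real case, so (b) reduces to a coefficient comparison.

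For part~(a), with $|z|=1$ and $|z-1|<1$, the same series is used but now one cannot simply drop phases. The natural tool is the Euler-type integral representation for the terminating series, or better, the contour/Cauchy-estimate approach: write $X_{m,n,r}(z)$ via its integral representation and bound $|z-w|$ on the relevant contour by $|1+\sqrt z|$-type quantities, exactly as in the manipulation already used in the proof of Lemma~\ref{lem:7} (the substitution to the ray $\{\tau\sqrt z:\tau>0\}$). Concretely I would use
\[
{}_{2}F_{1}(-r,-r-m/n;1-m/n;z)=\frac{\Gamma(1-m/n)}{\Gamma(-r-m/n)\,\Gamma(1+r)}\int\cdots,
\]
or, avoiding the awkward Gamma values at negative arguments, I would instead bound $|X_{m,n,r}(z)|$ by $\sum_{k}\binom{r}{k}\frac{|(-r-m/n)_{k}|}{|(1-m/n)_{k}|}$ and then use the crude but clean estimate $|(1-m/n)_{k}|\ge \frac{\Gamma(r+1-m/n)}{\Gamma(1-m/n)}\big/\binom{r}{k}\cdot(\text{something})$ — the point being to extract the factor $\frac{r!\,\Gamma(1-m/n)}{\Gamma(r+1-m/n)}$ cleanly and leave behind a sum that is at most $1.072\,|1+\sqrt z|^{2r}$. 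The constant $1.072$ strongly suggests the residual sum is $\sum_{k}\binom{2r}{k}|z|^{k/2}=2^{2r}$-type on $|z|=1$ but with a small correction, and that $1.072$ absorbs a bound like $\sup_{r}\big(\text{ratio of products}\big)$, perhaps of the form $\prod(1+\tfrac{m/n}{r+\cdots})$, whose sup over all admissible $m,n,r$ is a concrete number just above $1$.

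The main obstacle I anticipate is part~(a): controlling the phases of $(t+z)$-type or $(1-z)^{k}$-type factors uniformly on the arc $|z|=1$, $|z-1|<1$, so that $|z-1|^{2r}$ or the series in powers of $(1-z)$ gets correctly absorbed into $|1+\sqrt z|^{2r}$ rather than something larger. I would handle this by the same $\sqrt z$-ray trick as in Lemma~\ref{lem:7}: after writing $X_{m,n,r}$ through a contour integral, rotate the contour to the ray through $\sqrt z$, pull out a unimodular factor (using $|z|=1$), and reduce to an integral with a manifestly positive or easily-bounded integrand; the bound $|1+\sqrt z|^{2r}$ then emerges from $|t+\sqrt z|\le t+1$-type estimates along that ray. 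Verifying that the accumulated multiplicative constant is below $1.072$ for all $r\ge0$ and all coprime $0<m<n$ is then a finite check plus a monotonicity argument in $r$ (the ratio $\frac{r!\,\Gamma(1-m/n)}{\Gamma(r+1-m/n)}$ has already been isolated, so what remains should be decreasing or bounded in $r$), which is routine once the structural inequality is in place.
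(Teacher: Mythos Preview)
Your plan for part~(b) via coefficient comparison is sound and is essentially what the cited reference does; the coefficients of $X_{m,n,r}(z)$ are positive and are termwise bounded by those of $(1+z)^{r}$, which in turn is bounded by $(1+\sqrt{z})^{2r}$ for $0\le z\le 1$.

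Part~(a), however, has a genuine gap: you have misidentified where the constant $1.072$ comes from. It is \emph{not} a supremum of Pochhammer ratios over $m,n,r$, nor a residual from a $2^{2r}$-type sum. It is purely geometric. The underlying inequality (proved in \cite{Vout2} via an integral representation and the substitution $t\mapsto t/(1+\sqrt{z})^{2}$) is
\[
\left| X_{m,n,r}(z) \right|
\leq \frac{4}{\left| 1+\sqrt{z}\right|^{2}}\,
\frac{r!\,\Gamma(1-m/n)}{\Gamma(r+1-m/n)}\,
\left| 1+\sqrt{z}\right|^{2r},
\]
valid for $|z|=1$, and the whole content of the refinement here is the elementary observation that on the arc $|z|=1$, $|z-1|<1$ one has $\arg z\in(-\pi/3,\pi/3)$, hence $\left|1+\sqrt{z}\right|^{2}=2\bigl(1+\cos(\arg z/2)\bigr)>2\bigl(1+\cos(\pi/6)\bigr)=2+\sqrt{3}$, so that $4/\left|1+\sqrt{z}\right|^{2}<4/(2+\sqrt{3})<1.072$.

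Your term-by-term series bound cannot recover this: for $|z|=1$ it collapses to $\sum_{k}|a_{k}|=X_{m,n,r}(1)$, which carries no $|1+\sqrt{z}|$-dependence at all. Your contour-rotation idea is closer in spirit, but you would need to actually carry out the integral-representation estimate to see that the leftover factor is exactly $4/|1+\sqrt{z}|^{2}$, not some $r$-dependent product; your proposed ``finite check plus monotonicity in $r$'' is chasing the wrong quantity.
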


\begin{proof}
(a) This is a slight refinement of Lemma~7.3(a) of \cite{Vout2}.
In the proof of that lemma, we showed that in our notation here
\[
\left| X_{m,n,r}(z) \right|
\leq \frac{4}{\left| 1 + \sqrt{z} \right|^{2}} \frac{\Gamma(1-m/n) \, r!}{\Gamma(r+1-m/n)}
\left| 1+\sqrt{z} \right|^{2r}.
\]

Since $z$ is on the unit circle, we can write $1+\sqrt{z}=1+z_{1} \pm \sqrt{1-z_{1}^{2}}i$,
where $0 \leq z_{1} \leq 1$. Here we have $|\theta|<\pi/3$ in order that $|z-1|<1$
holds. Hence $z_{1}=\cos(\theta/2)>\cos(\pi/6)$, and so
\[
\frac{4}{\left| 1 + \sqrt{z} \right|^{2}}<1.072.
\]

(b) This is Lemma~5.2 of \cite{Vout1}, noticing that $Y_{m,n,r}(z)$ there is our
$X_{m,n,r}(z)$.
\end{proof}

In order to obtain the simplified constant in our effective irrationality measure,
we will also need lower bounds for the hypergeometric functions and for their
denominators. We now establish these results.

\begin{lemma}
\label{lem:hypg-LB}
Let $m,n$ and $r$ be non-negative integers with $0<m<n/2$ and $\gcd(m,n)=1$.
For $z \in \bbC$ with $\Real(z) \geq 0$, we have
\[
\left( 1 +\Real(z) \right)^{r} \leq \left| X_{m,n,r}(z) \right|,
\]
\[
\left( 1 +\Real(z) \right)^{r} \leq \left| Y_{m,n,r}(z) \right|.
\]
\end{lemma}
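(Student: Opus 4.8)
The plan is: (i) factor $X_{m,n,r}(z)$ completely into real linear factors $1+\lambda_{i}z$ with $\lambda_{i}>0$; (ii) prove both inequalities when $z$ is a non-negative real number; (iii) deduce the general case by comparing each factor $|1+\lambda_{i}z|$ with $1+\lambda_{i}\Real(z)$. The one genuinely delicate point is step (i); steps (ii) and (iii) are routine once the factorisation is available.

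For (i), write $\nu=m/n$, so that $0<\nu<1/2$ and in particular $-1<-\nu<\nu<1$. Applying Pfaff's transformation to the terminating series (legitimate since $c=1-\nu$ is not a non-positive integer) gives
\[
X_{m,n,r}(z)={}_{2}F_{1}(-r,-r-\nu;1-\nu;z)=(1-z)^{r}\,{}_{2}F_{1}\!\left(-r,\,r+1;\,1-\nu;\,\frac{z}{z-1}\right),
\]
and ${}_{2}F_{1}(-r,r+1;1-\nu;w)$ is a nonzero constant (depending only on $r$ and $\nu$) times the Jacobi polynomial $P_{r}^{(-\nu,\nu)}(1-2w)$, so that $X_{m,n,r}(z)$ is a nonzero constant times $(1-z)^{r}P_{r}^{(-\nu,\nu)}\!\left((z+1)/(1-z)\right)$. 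Since $-\nu>-1$ and $\nu>-1$, the classical localisation theorem for zeros of Jacobi polynomials (Szeg\H{o}) shows $P_{r}^{(-\nu,\nu)}$ has $r$ simple zeros, all in $(-1,1)$; and the M\"obius map $x\mapsto z=(x-1)/(x+1)$ carries $(-1,1)$ bijectively onto $(-\infty,0)$. Hence the degree-$r$ polynomial $X_{m,n,r}$ has its $r$ zeros real and negative, and since $X_{m,n,r}(0)=1$ this forces $X_{m,n,r}(z)=\prod_{i=1}^{r}(1+\lambda_{i}z)$ with $\lambda_{i}>0$. Consequently $Y_{m,n,r}(z)=z^{r}X_{m,n,r}(1/z)=\prod_{i=1}^{r}(z+\lambda_{i})$.

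For (ii), a short Pochhammer-symbol computation gives the coefficient of $z^{k}$ in $X_{m,n,r}$ as
\[
c_{k}=\binom{r}{k}\prod_{l=1}^{k}\frac{r-k+l+\nu}{l-\nu}\ \geq\ \binom{r}{k},
\]
since for $0\leq k\leq r$ each factor is at least $1$ (the inequality $r-k+l+\nu\geq l-\nu$ is just $r-k+2\nu\geq 0$, while the denominators $l-\nu>0$ because $\nu<1$). As $Y_{m,n,r}$ has these same coefficients in reverse order, both $X_{m,n,r}$ and $Y_{m,n,r}$ dominate $(1+t)^{r}=\sum_{k}\binom{r}{k}t^{k}$ coefficientwise, hence $X_{m,n,r}(t),\,Y_{m,n,r}(t)\geq(1+t)^{r}$ for every real $t\geq 0$.

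For (iii), fix $z$ with $\Real(z)\geq 0$. For any $\lambda>0$, squaring gives $|1+\lambda z|^{2}=1+2\lambda\Real(z)+\lambda^{2}|z|^{2}\geq\left(1+\lambda\Real(z)\right)^{2}$ and $|z+\lambda|^{2}=|z|^{2}+2\lambda\Real(z)+\lambda^{2}\geq\left(\Real(z)+\lambda\right)^{2}$; since the right-hand sides are squares of non-negative reals, $|1+\lambda z|\geq 1+\lambda\Real(z)$ and $|z+\lambda|\geq\Real(z)+\lambda$. Multiplying these over $i=1,\dots,r$ and using the factorisations from (i),
\[
\left|X_{m,n,r}(z)\right|\geq\prod_{i=1}^{r}\left(1+\lambda_{i}\Real(z)\right)=X_{m,n,r}\!\left(\Real(z)\right)\geq\left(1+\Real(z)\right)^{r},
\]
and likewise $\left|Y_{m,n,r}(z)\right|\geq Y_{m,n,r}\!\left(\Real(z)\right)\geq\left(1+\Real(z)\right)^{r}$, where the final inequality in each chain is (ii) applied at $t=\Real(z)\geq 0$. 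I expect the identification of $X_{m,n,r}$ with a M\"obius-transformed Jacobi polynomial (and hence the factorisation into factors $1+\lambda_{i}z$ with $\lambda_{i}>0$) to be the only step requiring care.
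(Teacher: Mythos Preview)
Your proof is correct and follows the same overall architecture as the paper (show the zeros of $X_{m,n,r}$ are negative reals, deduce $|X_{m,n,r}(z)|\ge X_{m,n,r}(\Real z)$, then bound the latter by $(1+\Real z)^r$), but the execution of the key step differs in a useful way. The paper identifies $X_{m,n,r}$ directly with $P_r^{(-m/n,\,-2r-1)}(1-2z)$; since $\beta=-2r-1<-1$, the classical interval-of-orthogonality argument is unavailable, and the paper must invoke the Hilbert--Klein formulas (Szeg\H{o}, Theorem~6.72) to count the zeros lying to the right of $1$. It is precisely in making $Z=\lfloor r+1/2+m/n\rfloor=r$ there that the hypothesis $m<n/2$ is used. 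By first applying Pfaff's transformation you land instead on $P_r^{(-\nu,\nu)}$ with both parameters in $(-1,1)$, so the elementary zero-localization theorem for Jacobi polynomials applies immediately; the M\"obius map $x\mapsto (x-1)/(x+1)$ then carries $(-1,1)$ to $(-\infty,0)$ as you note. This is cleaner, and in fact your argument goes through for the full range $0<m<n$, not only $m<n/2$. Your step~(ii) is also self-contained (the paper instead cites Lemma~5.2 of \cite{Vout1} for the same inequality), and the factorwise estimate in step~(iii) is a tidy replacement for the paper's distance-to-zeros remark.
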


\begin{proof}
We start by showing that all the zeroes of $X_{m,n,r}(z)$ are negative real
numbers. Equation~(4.21.2) in \cite[p.~62]{Sz} defines the Jacobi polynomials,
$P_{n}^{(\alpha,\beta)}$ by
\[
P_{n}^{(\alpha,\beta)}(1-2z)= \binom{n+\alpha}{n}
{} _{2}F_{1} \left( -n, n+\alpha+\beta+1; \alpha+1; z \right).
\]

Here Szeg\H{o}'s $n$, $\alpha$ and $\beta$ correspond to our $r$, $-m/n$ and
$-2r-1$, respectively.

The zeroes of $X_{m,n,r}(z)$ will all be negative if the zeroes of
$P_{n}^{(\alpha,\beta)}(z)$ are all real and larger than $1$ for this choice of
$n$, $\alpha$ and $\beta$. The number of such zeroes is the quantity $N_{3}$ in
Theorem~6.72 in \cite[p.145]{Sz}. In the notation of this theorem,
$Z=\left[ r+1/2+m/n \right]=r$ -- this is one of the reasons why we need the
condition $m<n/2$. Furthermore, we see that Szeg\H{o}'s
\[
\binom{2n+\alpha+\beta}{n} \binom{n+\alpha}{n}
\]
is equal to
\[
\binom{-1-m/n}{n} \binom{r-m/n}{r}
\]
for our choice of $n$, $\alpha$ and $\beta$.
This quantity is negative if $r$ is odd and positive if $r$ is
even. Thus, by equation~(6.72.8) of \cite[p.146]{Sz}, $N_{3}=2 \left[ (r+1)/2 \right]=r$,
if $r$ is even and $N_{3}=2 \left[ r/2 \right]+1=r$ if $n$ is odd.

Since all the zeroes of $X_{m,n,r}$ are real, $z$ is always at least as
far from each of these zeroes as $\Real(z)$ is. Therefore,
$\left| X_{m,n,r}(z) \right| \geq \left| X_{m,n,r}(\Real(z)) \right|$. From
Lemma~5.2 of \cite{Vout1}, we have $\left| X_{m,n,r}(\Real(z)) \right| \geq (1+\Real(z))^{r}$,
as stated (note that $Y_{m,n,r}(z)$ in \cite{Vout1} is the same as $X_{m,n,r}(z)$
here).

Since $Y_{m,n,r}(z)=z^{r}X_{m,n,r} \left( z^{-1} \right)$, all its zeroes are
also negative real numbers, so we have
$\left| Y_{m,n,r}(z) \right| \geq \left| Y_{m,n,r}(\Real(z)) \right|$ too.
Since the coefficient of $z^{k}$ in $(1+z)^{r}$ equals the coefficient of $z^{r-k}$,
the argument in Lemma~5.2 of \cite{Vout1} showing that
$\left| X_{m,n,r}(\Real(z)) \right| \geq (1+\Real(z))^{r}$ also shows that
$\left| Y_{m,n,r}(\Real(z)) \right| \geq (1+\Real(z))^{r}$.
\end{proof}

Important for our work will be the following result of \cite{Ben2}.

\begin{lemma}
\label{lem:bennett}
Suppose $m$ and $n$ are relatively prime rational integers with $3 \leq n \leq 10^{4}$
and $0<m<n$. Recall that $\displaystyle\theta(x;n,m)=\sum_{\stackrel{p \leq x}{p \equiv m \bmod n}} \log(p)$,
where the sum is over all such primes $p$.

We have
\begin{equation}
\label{eq:theta-UB}
\left| \theta(x;n,m) - \frac{x}{\varphi(n)} \right| < \frac{x}{840\log(x)} <
\left\{
\begin{array}{ll}
4.31 \cdot 10^{-5}x & \text{for $x \geq 10^{12}$},\\
3.98 \cdot 10^{-5}x & \text{for $x \geq 10^{13}$}.
\end{array}
\right.
\end{equation}

Furthermore,
\begin{equation}
\label{eq:theta-UBsqrt}
| \theta(x;n,m) - x/\varphi(n)| < 1.818 \sqrt{x},
\end{equation}
for $x \leq 10^{12}$ and each $3 \leq n \leq 10^{4}$ (for $x \leq 10^{13}$ when
$3 \leq n \leq 100$).
\end{lemma}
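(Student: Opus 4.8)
The plan is to treat \eqref{eq:theta-UB} and \eqref{eq:theta-UBsqrt} by the standard analytic machinery for primes in arithmetic progressions, made fully explicit; this is precisely the content of \cite{Ben2}, so in the present paper it suffices to quote that work, but the sketch below indicates why the stated constants are the ones that emerge. First I would pass from $\theta(x;n,m)$ to the weighted sum $\psi(x;n,m)=\sum_{p^{k}\le x,\ p^{k}\equiv m \bmod n}\log p$, noting that the difference $\psi(x;n,m)-\theta(x;n,m)$ is supported on prime powers with exponent at least $2$, hence is $O(\sqrt{x})$ with an explicit and, for the present purpose, negligible constant. Using orthogonality of Dirichlet characters modulo $n$, write $\psi(x;n,m)=\varphi(n)^{-1}\sum_{\chi \bmod n}\bar\chi(m)\psi(x,\chi)$, where $\psi(x,\chi)=\sum_{k\ge 1}\sum_{p^{k}\le x}\chi(p)^{k}\log p$. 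The principal character contributes essentially $\varphi(n)^{-1}\psi(x)$, controlled by an explicit form of the ordinary prime number theorem (e.g.\ the explicit bounds for $\psi(x)$ of Dusart and B\"uthe), and the remaining task is to bound $\psi(x,\chi)$ for each non-principal $\chi$.

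For the non-principal characters I would use the truncated explicit formula $\psi(x,\chi)=-\sum_{|\Imag\rho|\le T} x^{\rho}/\rho+(\text{bounded error})$, with $\rho$ running over the non-trivial zeros of $L(s,\chi)$. Three explicit ingredients then do the work: (i) an explicit zero-free region for $L(s,\chi)$ of de la Vall\'ee Poussin type, in the sharp forms due to McCurley, Kadiri and others; (ii) an explicit bound for the zero-counting function $N(T,\chi)$ of $L(s,\chi)$ in rectangles of height $T$; and (iii) control of a possible Siegel (exceptional real) zero. Since the statement is restricted to $3\le n\le 10^{4}$, the exceptional-zero issue is settled by a finite computation verifying that no $L(s,\chi)$ of conductor at most $10^{4}$ has a real zero dangerously close to $s=1$. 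Optimizing $T$ against $x$ and collecting all the explicit constants yields a bound of the shape $|\psi(x;n,m)-x/\varphi(n)|<c\,x/\log x$, and after removing the prime-power contribution one obtains \eqref{eq:theta-UB} with the coefficient $1/840$ in $x/\log x$, together with the cleaner numerical consequences for $x\ge 10^{12}$ and $x\ge 10^{13}$.

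For the $\sqrt{x}$-type bound \eqref{eq:theta-UBsqrt} in the range $x\le 10^{12}$ (respectively $x\le 10^{13}$ when $n\le 100$) the analytic approach above is too lossy; instead one relies on the numerical verification of the Generalized Riemann Hypothesis up to a sufficiently large height for every Dirichlet $L$-function of conductor at most $10^{4}$ (as carried out by Platt and others), together with the explicit formula. Under GRH up to height $H$ each zero contributes $x^{1/2}/|\rho|$, so both the tail beyond $H$ and the truncation error are $O(\sqrt{x})$; a careful accounting of the constants, calibrated by direct computation of $\theta(x;n,m)$ for moderate $x$, produces the constant $1.818$.

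I expect the main obstacle to be the bookkeeping of explicit constants rather than any single conceptual step: one must simultaneously control the width of the zero-free region, the zero-count, the truncation height, the contribution of a possible exceptional zero, and the passage $\theta\leftrightarrow\psi$, and then balance these so as to land on the clean uniform constant $840$ for all moduli $n\le 10^{4}$. The exceptional-zero verification across the full range of $n$ is the most delicate analytic point, and the GRH-verification height required for \eqref{eq:theta-UBsqrt} is the most computationally demanding one; both are handled in \cite{Ben2}.
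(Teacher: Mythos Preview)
Your proposal is correct and in spirit the same as the paper's: this lemma is simply a quotation of results from \cite{Ben2}, and the paper's proof consists of two citations --- Theorem~1.2 of \cite{Ben2} for \eqref{eq:theta-UB}, and Theorem~1.9 together with equation~(A.2) of \cite{Ben2} for \eqref{eq:theta-UBsqrt}. Your sketch of the explicit-formula machinery underlying \cite{Ben2} is accurate and more informative than what the paper actually records, but for the purposes of this lemma the bare citations suffice.
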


\begin{proof}
Equation~\eqref{eq:theta-UB} follows from Theorem~1.2 of \cite{Ben2}.

Equation~\eqref{eq:theta-UBsqrt} follows from Theorem~1.9 and equation~(A.2) of
\cite{Ben2}.
\end{proof}

\begin{lemma}
\label{lem:denom-LB1}
Let $m$, $n$ and $r$ be non-negative integers with $0<m<n/2$, $\gcd(m,n)=1$ and $n \geq 3$.

\noindent
{\rm (a)} We have
\begin{equation}
\label{eq:denom-LB1a}
D_{m,n,r} > (n/4)^{r} \cdot \prod_{\stackrel{p|n}{\text{$p$, prime}}} p^{v_{p}((2r)!)-v_{p}(r!)}
\geq \left( n\mu_{n}/4 \right)^{r}(2r+1)^{-\omega(n')/2},
\end{equation}
where $\mu_{n}$ is as in Theorem~$\ref{thm:dn-UB}$, $n'$ is the largest odd factor
of $n$ and $\omega(n')$ is the number of distinct prime factors of $n'$.

\noindent
{\rm (b)} We have
\begin{equation}
\label{eq:denom-LB1b}
D_{m,n,r} > \left\{
\begin{array}{ll}
0.08  \cdot 2.1^{r}  & \text{if $n=3$},\\
0.02  \cdot 3.77^{r} & \text{if $n=4$},\\
0.3   \cdot 2.54^{r} & \text{if $n=5$},\\
0.3   \cdot 10.9^{r} & \text{if $n=6$},\\
0.7   \cdot 2.63^{r} & \text{if $n=7$},\\
0.2   \cdot 5.53^{r} & \text{if $n=8$}.
\end{array}
\right.
\end{equation}
\end{lemma}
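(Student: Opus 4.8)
The plan is to analyze the denominator $D_{m,n,r}$ via the $p$-adic valuations of the coefficients of $X_{m,n,r}(z)$, exactly as in Chudnovsky's treatment. Recall that the $k$-th coefficient of $X_{m,n,r}(z) = {}_2F_1(-r,-r-m/n;1-m/n;z)$ is
\[
c_k = \binom{r}{k}\frac{(-r-m/n)(-r-m/n+1)\cdots(-r-m/n+k-1)}{(1-m/n)(2-m/n)\cdots(k-m/n)}.
\]
Writing $m/n$ over the common structure, the numerator and denominator of $c_k$ (in lowest terms) are controlled by products of the form $\prod_{j}(jn\pm m)$ divided by $n^k k!$-type quantities; a prime $p \nmid n$ contributes to the denominator only through $k!$-style factors already accounted for, while a prime $p \mid n$ contributes a factor of $p^{v_p(n)}$ for each of the $k$ steps in the denominator product, offset by the $p$-adic valuations coming from the $\binom{r}{k}$ and the rising-factorial numerator. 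Collecting these, one gets that $D_{m,n,r}$ must be divisible by $n^r$ up to a correction supported on primes dividing $n$, and the standard bound $v_p((2r)!) - v_p(r!) \le r \cdot \frac{1}{p-1} + (\text{small})$ combined with the crude $\binom{2r}{r} < 4^r$ gives the first inequality in \eqref{eq:denom-LB1a}. More precisely, for each prime $p \mid n$ one isolates the "extra" power $p^{v_p((2r)!)-v_p(r!)}$ that survives beyond the $(n/4)^r$ already extracted; since $v_p((2r)!)-v_p(r!) \ge r v_p(n) - v_p((2r)!/r!^2) \ge r\bigl(v_p(n)-1/(p-1)\bigr) - (\text{something like } \log_p(2r)/(p-1))$ after using Legendre's formula, the product over $p\mid n$ of $p^{v_p((2r)!)-v_p(r!)}$ is at least $\mu_n^r$ times a factor no smaller than $\prod_{p\mid n'}p^{-v_p(2r+1)}$, and bounding $\prod_{p\mid n'} p^{v_p(2r+1)} \le (2r+1)^{\omega(n')}$ — actually $\prod p^{v_p(2r+1)} \le 2r+1$ trivially, but splitting the loss symmetrically between odd primes yields the exponent $\omega(n')/2$ — gives the second inequality. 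The only genuine subtlety is that the power of $2$ behaves differently (hence "$n'$ the largest odd factor" and the restriction to odd primes in $\omega(n')$), because $2^{1/(2-1)} = 2$ is already as large as one could want and the $2$-adic valuation of $(2r)!/r!$ is essentially $r$ with no deficit.

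For part (b) the plan is simply to specialize: for $n \in \{3,4,5,6,7,8\}$, the bound in (a) has an explicit clean form, but the base of the exponential it produces, $n\mu_n/4$, is sometimes weak (e.g. for $n=3$ this is $3 \cdot 3^{1/2}/4 \approx 1.30$, far below $2.1$). So instead I would return to the exact formula for $D_{m,n,r}$, compute $v_p(D_{m,n,r})$ for the primes $p \mid n$ directly using Legendre's formula rather than the crude estimate, and also retain the contribution from primes $p \nmid n$ that nonetheless divide denominators of the $c_k$ — for small $n$ there is only one value of $m$ (namely $m=1$) to consider, or at most two, so this is a finite, explicit computation. Taking the $\liminf$-type behaviour one extracts a growth rate strictly larger than $n\mu_n/4$: for instance for $n=6$, $\mu_6 = 2 \cdot 3^{1/2}$, and the $2$-adic and $3$-adic valuations combine with the binomial denominators to give growth rate $> 10.9$. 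These six constants, together with explicit small-$r$ checks to pin down the leading constant ($0.08$, $0.02$, etc.), follow from a short computation that I would carry out case by case; no new idea is needed beyond part (a)'s analysis applied sharply.

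The main obstacle is the bookkeeping in part (a): one must correctly identify which primes $p \mid n$ contribute a genuine surplus to $D_{m,n,r}$ beyond what $n^r$ already supplies, and track the interplay between the factor $p^{v_p(n)}$ appearing $k$ times in the denominator product $(1-m/n)\cdots(k-m/n)$, the cancellation from $\binom{r}{k}$, and the cancellation from the numerator rising factorial $(-r-m/n)\cdots$. The cleanest route is to follow Chudnovsky (Theorem 4.3 and its proof in \cite{Chud}) and the refinement in \cite{Vout1}: write $D_{m,n,r} = \operatorname{lcm}$-type quantity, factor out $\prod_{p \mid n} p^{v_p(\text{denom of } c_k) \text{ maximized over } k}$, show this maximum is $\ge r v_p(n) + v_p\bigl((2r)!/r!\bigr) - r$ (the $-r$ absorbing the $\binom{r}{k}$ loss and being where the $4^{-r}$ comes from, via $\binom{2r}{r}<4^r$), and then invoke Legendre. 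Once the prime-by-prime accounting is set up correctly, both displayed inequalities in \eqref{eq:denom-LB1a} are elementary, and part (b) is a matter of redoing the accounting without the crude $4^r$ wastage for the six listed values of $n$.
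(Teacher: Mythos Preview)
Your proposal has a fundamental misconception in part~(a) and misses the key idea in part~(b).

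For part~(a): you claim that ``$D_{m,n,r}$ must be divisible by $n^r$ up to a correction supported on primes dividing $n$.'' This is false: $D_{m,n,r}$ is \emph{coprime} to $n$. After clearing the obvious $n^k$, the $k$-th coefficient is $\binom{r}{k}\prod_{j=0}^{k-1}((r-j)n+m)\big/\prod_{j=1}^{k}(jn-m)$; since $\gcd(m,n)=1$, every factor $jn-m$ is coprime to $n$, so no prime $p\mid n$ ever appears in the denominator of any coefficient. Your $p$-adic bookkeeping for primes dividing $n$ is therefore tracking a quantity that is identically zero. The paper's route is entirely different and much cleaner: evaluate at $z=1$ via the Chu--Vandermonde identity to obtain $X_{m,n,r}(1)=n^r(r+1)\cdots(2r)\big/\bigl[(n-m)\cdots(rn-m)\bigr]$. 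Since $D_{m,n,r}X_{m,n,r}(1)\in\bbZ$ and $(n-m)\cdots(rn-m)$ is coprime to $n$, this forces $D_{m,n,r}\geq (n-m)\cdots(rn-m)\big/\bigl[(r+1)\cdots(2r)\bigr]>(n/4)^r$. The extra factor $\prod_{p\mid n}p^{v_p((2r)!)-v_p(r!)}$ then arises not because these primes divide $D_{m,n,r}$ (they do not) but because they divide the numerator $(r+1)\cdots(2r)=(2r)!/r!$ and can be stripped out before comparing sizes.

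For part~(b): your claim that ``no new idea is needed beyond part~(a)'s analysis applied sharply'' is wrong. Part~(a) gives exponential base $n\mu_n/4$, which for $n=3$ is $3\sqrt{3}/4\approx 1.30$, well below the required $2.1$; sharpening the same estimate cannot close this gap. The paper instead uses a genuinely different tool: by Lemma~3.3(b) of \cite{Vout1}, every prime $p\equiv -m\pmod n$ with $(nr+m+n)/(n-1)\le p\le nr-m$ divides $D_{m,n,r}$, and the explicit bounds on $\theta(x;n,-m)$ from \cite{Ben2} then lower-bound the product of such primes. This prime-counting-in-arithmetic-progressions argument, followed by direct computation of $D_{m,n,r}$ for small $r$, is what produces the bases and constants in \eqref{eq:denom-LB1b}.
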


\begin{note-nonum}
The lower bound in \eqref{eq:denom-LB1a}, while smaller than the asymptotics of $D_{m,n,r}$,
does show the right dependence on $n$. In Remark~7.7 of \cite{Vout2}, we stated
that $n\mu_{n}$ is approximately $\left( \pi/e^{\gamma} \right) (\Chr)_{n}^{2}$,
so $n\mu_{n}/4$ is approximately $0.44 (\Chr)_{n}^{2}$.
\end{note-nonum}

\begin{proof}
(a) Our proof uses the fact that if $f(z) \in \bbQ[z]$, then the least common multiple
of the denominators of its coefficients must be at least the reciprocal of the
absolute value of $f(v)$ for an integer $v$. Since ${}_{2}F_{1}(a,b;c;1)$ has a
nice value, we consider $v=1$ here.

Using the Chu-Vandermonde identity (see equation~(15.5.24) 
of \cite{DLMF}) with $b=-r-m/n$, $c=1-m/n$ and $n$ there equal to our $r$, we have
\[
X_{m,n,r}(1)
=\frac{(c-b)_{r}}{(c)_{r}}
=\frac{(r+1)\cdots(2r)}{(1-m/n)\cdots (r-m/n)}
=n^{r}\frac{(r+1)\cdots(2r)}{(n-m)\cdots (rn-m)},
\]
where $(a)_{r}=a \cdots (a+r-1)$ is Pochhammer's symbol. Since $n$ is relatively
prime to the denominators of the coefficients of $X_{m,n,r}(z)$ (all the terms
in the denominators are of the form $in-m$ and $\gcd(m,n)=1$), we need only
consider $(r+1)\cdots(2r)/\left[ (n-m)\cdots (rn-m) \right]$.

Now
\[
\frac{(n-m)\cdots (nr-m)}{(r+1)\cdots (2r)}
>\frac{(n/2)\cdots (nr-n/2)}{(r+1)\cdots (2r)}
=\frac{n^{r}(2r-1)!}{2^{2r-1}(r-1)!}\frac{r!}{(2r)!}
=(n/4)^{r},
\]
since $m<n/2$.

But we can remove more powers of prime divisors of $n$ from $(r+1)\cdots (2r)$ too:
\[
\prod_{\stackrel{p|n}{\text{$p$, prime}}} p^{v_{p}((2r)!)-v_{p}(r!)} | \left( (r+1)\cdots(2r) \right).
\]

Observe that $(2r)!/r!=2^{r} \cdot 1 \cdot 3 \cdots (2r-1)$, so
$2^{v_{2}((2r)!)-v_{2}(r!)}=2^{r}$. Letting $s_{p}(r)$ be the sum of the digits
in the base $p$ expansion of $r$, we find that $v_{p}((2r)!)-v_{p}((r!))=(2r-s_{p}(2r))/(p-1)
-(r-s_{p}(r))/(p-1)$ (see Exercise~14 on page~7 of \cite{Kob}). So
\[
v_{p}((2r)!)-v_{p}((r!))=r/(p-1)+ \left( s_{p}(r)-s_{p}(2r) \right)/(p-1),
\]
for primes $p>2$. The maximum of $p^{(s_{p}(2r)-s_{p}(r))/(p-1)}$ occurs when all the
base $p$ digits of $2r$ are equal to $p-1$, in which case it is $\sqrt{2r+1}$.
Part~(a) of the lemma follows.

\vspace*{3.0mm}

(b) Taking $A=0$ and $\ell=1$ in Lemma~3.3(b) of \cite{Vout1}, we know that if
$p>(nr+m)^{1/2}$ is a prime such that $p \equiv -m \bmod n$ and
\[
\frac{nr+m+n}{n-1} \leq p \leq nr-m,
\]
then $p | D_{m,n,r}$. Furthermore, if $r \geq n$, then $(nr+m+n)/(n-1)>(nr+m)^{1/2}$.
This is why we calculate the values for $r \leq n$.

Thus
\begin{align}
\label{eq:dmnr-LB}
\log D_{m,n,r}
& \geq \sum_{\stackrel{(nr+m+n)/(n-1) \leq p \leq nr-m}{p \equiv -m \bmod n}} \log(p) \\
&= \theta \left( nr-m; n, -m \right) - \theta \left( (nr+m+n)/(n-1); n, -m \right), \nonumber
\end{align}
where $\theta(x;n,-m)$ is the sum of the logarithms of all primes $p \leq x$
with $p \equiv -m \bmod n$ and $\varphi(n)$ is Euler's phi function.

From Corollary~1.7 of \cite{Ben2}, we have
\[
\left| \theta(x; n, -m) - \frac{x}{\varphi(n)} \right|<0.00174x,
\]
for $x \geq 10^{6}$ and the pairs $(m,n)$ being considered here.

We apply this inequality to \eqref{eq:dmnr-LB} to obtain the bounds in the lemma
for $r \geq 10^{6}$. In fact, we obtain inequalities where the constants in front
of the exponential terms are slightly larger.

Using a program written in Java, we computed the denominators for the remaining
polynomials. This took just under 500 seconds on a Windows laptop with an Intel
i7-9750H 2.60GHz CPU. It is from this calculation that the constants in front
of the exponential terms arise. E.g., for $n=3$, we had to replace the constant
$0.1$ with $0.08$, which is required for $m=1$ and $r=13$. Part~(b) follows.
\end{proof}

The following lemma is much weaker than Lemma~\ref{lem:denom-LB1} permits (roughly
$\left( n\mu_{n}/8 \right)^{2r}$), but it suffices for our needs here.

\begin{lemma}
\label{lem:denom-LB2}
Let $m,n$ and $r$ be non-negative integers with $0<m<n/2$, $n \geq 3$ and $\gcd(m,n)=1$.
We have
\begin{equation}
\label{eq:denom-LB2}
\frac{m}{60n} < D_{m,n,r}^{2}\frac{(m/n) \cdots (r+m/n)}{(r+1) \cdots (2r+1)}
\end{equation}
\end{lemma}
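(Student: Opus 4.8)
The plan is to bound from below the product
\[
P_{m,n,r} := D_{m,n,r}^{2}\,\frac{(m/n)\cdots(r+m/n)}{(r+1)\cdots(2r+1)}
\]
by combining the denominator lower bound of Lemma~\ref{lem:denom-LB1}(a) with an elementary lower estimate for the ``raw'' Pochhammer ratio $(m/n)\cdots(r+m/n)/\bigl((r+1)\cdots(2r+1)\bigr)$. First I would write the numerator Pochhammer product as $\prod_{j=0}^{r}(jn+m)/n^{r+1}$ and observe that, since $n$ is coprime to every factor $jn+m$, the power of each prime $p\mid n$ appearing in $D_{m,n,r}^{2}$ multiplies cleanly against the $n^{-(r+1)}$; more precisely, the second inequality in \eqref{eq:denom-LB1a} gives $D_{m,n,r}\geq (n\mu_n/4)^{r}(2r+1)^{-\omega(n')/2}$, so $D_{m,n,r}^{2}\geq (n\mu_n/4)^{2r}(2r+1)^{-\omega(n')}$, and I expect the $n^{2r}$ here to absorb the $n^{-(r+1)}$ from the numerator with room to spare.

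The core of the argument is then to show that
\[
\frac{\prod_{j=0}^{r}(jn+m)}{n\cdot(r+1)\cdots(2r+1)}\cdot\Bigl(\frac{n\mu_n}{4}\Bigr)^{2r}(2r+1)^{-\omega(n')}
\]
stays bounded below by $m/(60n)$. I would factor this as $\frac{m}{n}\cdot\prod_{j=1}^{r}\frac{jn+m}{n(r+j+1)}\cdot\bigl(n\mu_n/4\bigr)^{2r}(2r+1)^{-\omega(n')}$, and for each $j\geq1$ use $jn+m>jn$ (and $jn+m\ge jn+1$ when one wants a touch more), so $\prod_{j=1}^{r}\frac{jn+m}{n(r+j+1)}\geq \prod_{j=1}^{r}\frac{j}{r+j+1}=\frac{r!\,(r+1)!}{(2r+1)!}$. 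Now $(n\mu_n/4)^{2r}\ge (n/4)^{2r}\ge(3/4)^{2r}$ since $\mu_n\ge1$ and $n\ge3$; combined with $\frac{r!(r+1)!}{(2r+1)!}$, whose reciprocal $\binom{2r+1}{r}\cdot\frac{1}{r+1}\sim 4^{r}/(r+1)^{3/2}\sqrt{\pi}$ grows like $4^{r}$ up to a polynomial factor, the product $(9/16)^{r}\cdot\frac{r!(r+1)!}{(2r+1)!}$ decays like $(9/16)^{r}(4^{-r})\cdot\mathrm{poly}(r)$ — wait, that is too lossy, so I must instead keep the full $\mu_n$ and exploit that $n\mu_n\ge n\cdot 2^{1/1}=2n$ whenever $2\mid n$, and in general $n\mu_n$ is large enough that $(n\mu_n/4)^{2}\ge n^{2}/16\cdot\mu_n^{2}$ beats $4$ comfortably for $n\ge 3$ (indeed $(3\mu_3/4)^{2}=(3^{1+1/2}/4)^{2}=27/16>1$). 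So the product telescopes against a geometric factor $>1$ per step times the central-binomial decay; balancing these, the minimum over $r$ is attained at a small value of $r$ (or $r=0$, giving exactly $m/n$), and a short case check for $r=1,2,3$ and the tail estimate for large $r$ finishes it, with the constant $60$ chosen generously to cover the worst small-$r$ case and the polynomial factor $(2r+1)^{\omega(n')}$.

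For the large-$r$ tail I would make the geometric-versus-polynomial comparison explicit: writing $P_{m,n,r}/P_{m,n,r-1}=\Bigl(\frac{rn+m}{n}\Bigr)\Bigl(\frac{n\mu_n}{4}\Bigr)^{2}\frac{1}{(2r)(2r+1)}\cdot\frac{(2r-1)^{\omega(n')}}{(2r+1)^{\omega(n')}}$ (after reinstating $D_{m,n,r}^{2}\ge(n\mu_n/4)^{2r}(2r+1)^{-\omega(n')}$ as the running bound), the ratio of the $D$-part alone contributes $(n\mu_n/4)^{2}\cdot\frac{rn}{4r^{2}n}= (n\mu_n)^{2}/(64 r)$ to leading order, which exceeds $1$ once $r$ is below $(n\mu_n)^{2}/64$ and falls below $1$ afterward — so $P_{m,n,r}$ is unimodal in $r$ and its infimum is at one of the two endpoints $r=0$ or $r\to\infty$. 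At $r=0$ it equals $m/n$. As $r\to\infty$, Stirling on $r!(r+1)!/(2r+1)!$ and the worst-case estimate $(2r+1)^{\omega(n')}$ show the limit is $0$ only if $(n\mu_n)^2/16<4$, i.e. never for $n\ge3$; hence $P_{m,n,r}\to\infty$ and the infimum is governed by small $r$. The \textbf{main obstacle} is getting the constants in this unimodality/tail analysis tight enough that $60$ actually works uniformly in $m$ and $n$ — in particular handling $n=3,4$ where $\mu_n$ is smallest and $\omega(n')$ contributes, and the finitely many small $r$; this is a bookkeeping matter rather than a conceptual one, and as the authors note the bound is far from optimal, so slack in the constant makes it routine.
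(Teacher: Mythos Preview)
Your overall strategy---combine the denominator lower bound from Lemma~\ref{lem:denom-LB1}(a) with an elementary lower bound for the Pochhammer ratio, then argue that the resulting expression grows geometrically in $r$---is exactly the paper's strategy. But the claim that closes your argument is false. You assert that your lower bound tends to $\infty$ as $r\to\infty$ ``only if $(n\mu_n)^2/16<4$, i.e.\ never for $n\ge 3$.'' Since $r!(r+1)!/(2r+1)!\asymp 4^{-r}r^{-1/2}$, your lower bound behaves like $\bigl((n\mu_n)^2/64\bigr)^r$ times a polynomial in $r$, so you need $n\mu_n>8$. This fails for $n=3$ (where $n\mu_n=3\sqrt{3}\approx 5.20$), for $n=5$ (where $n\mu_n=5^{5/4}\approx 7.48$), and is borderline for $n=4$ (where $n\mu_n=8$). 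For $n=3,5$ your lower bound therefore tends to $0$, and no amount of bookkeeping at small $r$ will rescue it: the gap is for \emph{all} large $r$, not just a finite check.

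The paper hits the same wall. Its version of your bound, using $D_{m,n,r}>(n/4)^r$ and a Stirling estimate for the ratio, gives $(n/8)^{2r}\sqrt{m/(8nr)}$, which works only for $n\ge 9$. The missing idea is that for $3\le n\le 8$ one must invoke the sharper, computationally obtained bounds of Lemma~\ref{lem:denom-LB1}(b) (e.g.\ $D_{1,3,r}>0.08\cdot 2.1^r$), which push the base of the geometric factor above~$1$. After that, a finite check over the remaining small~$r$ finishes those cases. So your proposal is on the right track but genuinely incomplete: you need part~(b) of Lemma~\ref{lem:denom-LB1}, not just part~(a), for $n\in\{3,4,5,7\}$ (and a careful look at $n=4,6,8$), together with a direct computation for the small~$r$ that the analytic bound does not cover. (There is also a minor indexing slip: your factorisation drops one denominator factor of $r+1$, and your ``unimodality'' argument mixes $P_{m,n,r}$ with its lower bound; but these are secondary to the main gap.)
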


\begin{note-nonum}
We do not consider $r=0$ here, as the right-hand side is $m/n$ in this case and
hence dependent on $n$, whereas we want an absolute constant on the left-hand side
in our result.
\end{note-nonum}

\begin{proof}
We first consider $r=0$. Here $X_{m,n,r}(z)=1$, so $D_{m,n,r}=1$ and the right-hand
side of \eqref{eq:denom-LB2} is $m/n$. So the lemma holds in this case.

Now consider $r=1$. Here $X_{m,n,r}(z)=(n+m)z/(n-m)+1$, so $D_{m,n,r}=(n-m)/2$
if $m$ and $n$ are both odd and $n-m$ otherwise, since $m$ and $n$ are relatively
prime. So right-hand side of \eqref{eq:denom-LB2} is at least
$m(n+m)(n-m)^{2}/\left( 24n^{2} \right)$. Taking the derivative of this quantity
with respect to $m$, we obtain
\[
\frac{(n-m) \left( n^{2}-nm-4m^{2} \right)}{24n^{2}}.
\]

Its numerator is zero when $m=n$ and $m= \left( -1 \pm \sqrt{17} \right)n/8$.
Only $m= \left( -1 + \sqrt{17} \right)n/8$ satisfies $0<m<n/2$, so we consider
this value of $m$ (where $m(n+m)(n-m)^{2}/\left( 24n^{2} \right)
=0.0084\ldots n^{2}>1/14$ for $n \geq 3$), along with $m=1$
(where $m(n+m)(n-m)^{2}/\left( 24n^{2} \right)
=(n+1)(n-1)/\left( 24n^{2} \right) \geq 2/27$ for $n \geq 3$)
and $m=(n-1)/2$ (where $m(n+m)(n-m)^{2}/\left( 24n^{2} \right)
=(n-1)(3n-1)(n+1)^{2}/\left( 384n^{2} \right) \geq 2/27$ for $n \geq 3$). So the lemma
holds for $r=1$.

We need a lower bound for
$(m/n) \cdots (r+m/n)/ \left( (r+1) \cdots (2r+1) \right)$.
We can write
\[
\frac{(m/n) \cdots (r+m/n)}{(r+1) \cdots (2r+1)}
= \frac{\Gamma(r+1+m/n)\Gamma(r+1)}{\Gamma(m/n)\Gamma(2r+2)}.
\]

We will use
\[
1<(2\pi)^{-1/2}x^{(1/2)-x}e^{x}\Gamma\left(x\right)<e^{1/(12x)},
\]
(see inequality~(5.6.1) in \cite{DLMF}).

Applying these inequalities to each of these four gamma function values,
we obtain
\[
\frac{(m/n) \cdots (r+m/n)}{(r+1) \cdots (2r+1)}
> \left( \frac{r+1+m/n}{r+1} \right)^{r+1} (r+1+m/n)^{m/n-1/2}
\frac{2^{-2r-3/2}}{(m/n)^{m/n-1/2}e^{(n/m+1/(2r+2))/12}}.
\]

Simplifying this, we find that
\begin{equation}
\label{eq:gamma1}
\frac{(m/n) \cdots (r+m/n)}{(r+1) \cdots (2r+1)}
> 4^{-r}\sqrt{m/(8n)} r^{m/n-1/2}>4^{-r}\sqrt{m/(8nr)}.
\end{equation}

We combine this with the lower bound, $D_{m,n,r}>(n/4)^{r}$, which follows from
\eqref{eq:denom-LB1a} in Lemma~\ref{lem:denom-LB1}(a):
\[
(n/8)^{2r}\sqrt{m/(8rn)}
< D_{m,n,r}^{2}\frac{(m/n) \cdots (r+m/n)}{(r+1) \cdots (2r+1)}.
%
\]

For $n \geq 9$, the left-hand side is greater than $m/(60n)$ for $r \geq 2$.

For $n \leq 8$, we use Lemma~\ref{lem:denom-LB1}(b). Writing the lower bounds
there as $d_{1}\cdot d_{2}^{r}<D_{m,n,r}$ and using \eqref{eq:gamma1}, the right-hand
side of \eqref{eq:denom-LB2} is greater than
\[
\sqrt{m/(8rn)}d_{1} \left( d_{2}/2 \right)^{2r}.
\]

For $3 \leq n \leq 8$, we can easily calculate that this quantity is greater
than $m/(60n)$ for $n=3$ with $r \geq 20$; $n=4$ with $r \geq 4$; and $n=5,6,7,8$
with $r \geq 2$. and hence the lemma holds for such $n$ and $r$.

Computing the quantity on the right-hand side of \eqref{eq:denom-LB2} directly
for $n=3$ and $n=4$ and the remaining values of $r$ completes the proof of the
lemma.

The lower bound of $1/(60n)$ is nearly attained for $n=3$, $m=1$ and $r=13$, where
the value of the right-hand side is $0.00565\ldots$.
\end{proof}

\section{The approximations and their bounds}

We start by defining our sequence of approximations to $(a/b)^{m/n}$, along
with some estimates we will require.

Let $r$ be a non-negative integer, $a$ and $b$ be algebraic integers in an imaginary
quadratic field, $\bbK$, with either $0<b/a<1$ a rational number or $|b/a|=1$ with
$|b/a-1|<1$.
Put $d=(a-b)^{2}$. Motivated by Lemma~\ref{lem:relation}, we define
\begin{equation}
\label{eq:thm2-pr-qr}
q_{r} = \frac{a^{r} D_{m,n,r}}{N_{d,m,n,r}} X_{m,n,r}(b/a) 
\hspace{3.0mm} \text{ and } \hspace{3.0mm} 
p_{r} = \frac{a^{r} D_{m,n,r}}{N_{d,m,n,r}} Y_{m,n,r}(b/a)
=\frac{b^{r} D_{m,n,r}}{N_{d,m,n,r}} X_{m,n,r}(a/b).
\end{equation}

\begin{lemma}
\label{lem:thm2-pq-ub}
Let $r$ be a non-negative integer. Let $m$ and $n$ be relatively prime positive
integers with $0<m<n/2$. Then $p_{r}$ and $q_{r}$ are algebraic integers with 
$p_{r}q_{r+1} \neq p_{r+1}q_{r}$ and 
\begin{equation}
\label{eq:thm2-q-est}
\frac{D_{m,n,r}}{N_{d,m,n,r}} \left( |a| \left| 1+\Real(b/a) \right| \right)^{r} \leq q_{r} 
< 1.072\cC_{n} \left( \frac{\cD_{n}}{\cN_{d,n}} \right)^{r} \left| a^{1/2} + b^{1/2} \right|^{2r}.
\end{equation}
\end{lemma}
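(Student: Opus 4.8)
\emph{Proof plan.} The plan is to verify the three assertions in turn: that $p_r,q_r$ lie in the ring of integers $\cO_{\bbK}$, that $p_rq_{r+1}\ne p_{r+1}q_r$, and the two-sided bound \eqref{eq:thm2-q-est} (read as a bound for $|q_r|$ in general, and as stated for $q_r$ itself once one knows $q_r>0$). Everything comes from unwinding the definition \eqref{eq:thm2-pr-qr} together with the lemmas already proved; the one step that needs a genuine idea, and which I expect to be the main obstacle, is the non-degeneracy $p_rq_{r+1}\ne p_{r+1}q_r$.

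\emph{Integrality.} Put $\sqrt d = a-b\in\cO_{\bbK}$, a square root of $d=(a-b)^2$, so that $b/a = 1-\sqrt d\,(1/a)$ and $a/b = 1+\sqrt d\,(1/b)$. By the defining property of $N_{d,m,n,r}$ the degree-$r$ polynomial $(D_{m,n,r}/N_{d,m,n,r})\,X_{m,n,r}(1-\sqrt d\,z)$ has coefficients in $\bbZ[\sqrt{\core(d)}]$, and after the substitution $z\mapsto -z$ the same holds for $(D_{m,n,r}/N_{d,m,n,r})\,X_{m,n,r}(1+\sqrt d\,z)$. Expanding $q_r = (D_{m,n,r}/N_{d,m,n,r})\,a^r X_{m,n,r}(b/a)$ and $p_r = (D_{m,n,r}/N_{d,m,n,r})\,b^r X_{m,n,r}(a/b)$ coefficient by coefficient then expresses each as a $\bbZ[\sqrt{\core(d)}]$-linear combination of the algebraic integers $a^{r-j}$, resp.\ $b^{r-j}$, for $0\le j\le r$; since $\sqrt{\core(d)}\in\bbK$, this gives $p_r,q_r\in\cO_{\bbK}$.

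\emph{Non-degeneracy.} Using $q_r = (D_{m,n,r}/N_{d,m,n,r})\,a^r X_{m,n,r}(b/a)$ and $p_r = (D_{m,n,r}/N_{d,m,n,r})\,a^r Y_{m,n,r}(b/a)$, it suffices to show that the polynomial $W(z) := X_{m,n,r}(z)Y_{m,n,r+1}(z) - X_{m,n,r+1}(z)Y_{m,n,r}(z)$, which has degree at most $2r+1$, does not vanish at $z=b/a$. Substituting the identity of Lemma~\ref{lem:relation} for $Y_{m,n,r}$ and $Y_{m,n,r+1}$, the $z^{-m/n}X_{m,n,r}X_{m,n,r+1}$ terms cancel and one is left with $W(z) = z^{-m/n}(z-1)^{2r+1}B(z)$, where $B(z) = (z-1)^2 X_{m,n,r}(z)R_{m,n,r+1}(z) - X_{m,n,r+1}(z)R_{m,n,r}(z)$ is analytic on $\bbC\setminus(-\infty,0]$. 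Since $B$ is analytic at $z=1$ while $B(z) = z^{m/n}(z-1)^{-(2r+1)}W(z)$, the polynomial $W$ must vanish to order at least $2r+1$ at $z=1$; combined with $\deg W\le 2r+1$ this forces $W(z) = c(z-1)^{2r+1}$, hence $B(z) = c\,z^{m/n}$, for some constant $c$. Evaluating at $z=1$ gives $c = -X_{m,n,r+1}(1)\,R_{m,n,r}(1)$, which is non-zero: the Chu--Vandermonde evaluation from the proof of Lemma~\ref{lem:denom-LB1}(a) gives $X_{m,n,r+1}(1) = n^{r+1}(r+2)\cdots(2r+2)\big/\big((n-m)\cdots((r+1)n-m)\big)>0$, and $R_{m,n,r}(1) = (m/n)\cdots(r+m/n)\big/\big((r+1)\cdots(2r+1)\big)>0$. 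As $b/a\ne 1$ in both admissible cases, $W(b/a) = c(b/a-1)^{2r+1}\ne 0$, so $p_rq_{r+1}-p_{r+1}q_r = -\big(D_{m,n,r}D_{m,n,r+1}/(N_{d,m,n,r}N_{d,m,n,r+1})\big)\,a^{2r+1}\,W(b/a)\ne 0$. (Should a closed form for this Wronskian be available in the hypergeometric literature, it may be quoted in place of this computation.)

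\emph{The estimate.} In both admissible cases $\Real(b/a)>0$: it equals $b/a$ when $0<b/a<1$, and $2-2\Real(b/a)=|b/a-1|^2<1$ when $|b/a|=1$. The lower bound is then immediate from Lemma~\ref{lem:hypg-LB} applied at $z=b/a$, since $|q_r| = (D_{m,n,r}/N_{d,m,n,r})\,|a|^r\,|X_{m,n,r}(b/a)| \ge (D_{m,n,r}/N_{d,m,n,r})\,\big(|a|\,|1+\Real(b/a)|\big)^r$. For the upper bound, write $|a|^r|1+(b/a)^{1/2}|^{2r} = |a^{1/2}+b^{1/2}|^{2r}$ (with $b^{1/2} = a^{1/2}(b/a)^{1/2}$, which is the principal value when $0<b/a<1$); Lemma~\ref{lem:hypg-UB}(b) bounds $|X_{m,n,r}(b/a)|$ by $(1+(b/a)^{1/2})^{2r}$ when $0<b/a<1$, while Lemma~\ref{lem:hypg-UB}(a) bounds it by $1.072\,\Gamma(1-m/n)\,r!\big/\Gamma(r+1-m/n)$ times $|1+(b/a)^{1/2}|^{2r}$ when $|b/a|=1$. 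In either case \eqref{eq:cndn-defn}, applied to our $m$ (which has $0<m<n/2$ and $\gcd(m,n)=1$), bounds $\max\big(1,\Gamma(1-m/n)\,r!/\Gamma(r+1-m/n)\big)\,D_{m,n,r}/N_{d,m,n,r}$ above by $\cC_n(\cD_n/\cN_{d,n})^r$, which yields the claimed upper bound (with the factor $1.072$ to spare in the rational case). Finally, when $0<b/a<1$ the coefficients of $X_{m,n,r}$ are all positive, so $q_r$ is a positive real number and \eqref{eq:thm2-q-est} holds as written.
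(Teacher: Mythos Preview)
Your proof is correct and follows essentially the same route as the paper: integrality from the definitions of $D_{m,n,r}$ and $N_{d,m,n,r}$, the upper bound from Lemma~\ref{lem:hypg-UB} combined with \eqref{eq:cndn-defn}, and the lower bound from Lemma~\ref{lem:hypg-LB}. The one place you diverge is the non-degeneracy $p_{r}q_{r+1}\neq p_{r+1}q_{r}$: the paper simply cites equation~(16) in Lemma~4 of Baker~\cite{Bak}, whereas you supply the underlying Wronskian computation (using Lemma~\ref{lem:relation} to force $(z-1)^{2r+1}\mid W(z)$, then identifying the nonzero constant via $X_{m,n,r+1}(1)R_{m,n,r}(1)$). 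Your argument is exactly the classical one that Baker's lemma encapsulates, so this is a matter of providing the proof in place of the citation rather than a genuinely different approach; your parenthetical remark anticipates this correctly.
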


\begin{proof}
The assertion that $p_{r}$ and $q_{r}$ are algebraic integers is just a
combination of our definitions of $p_{r}$, $q_{r}$, $D_{m,n,r}$ and $N_{d,m,n,r}$.

That $p_{r}q_{r+1} \neq p_{r+1}q_{r}$ is equation~(16) in Lemma~4 of \cite{Bak}.

We now prove the upper bound for $q_{r}$.

If $b/a$ is a rational number with $0<b<a$ relatively prime integers, then from
Lemma~5.2 of \cite{Vout1} (recalling again that $Y_{m,n,r}(z)$ there is $X_{m,n,r}(z)$
here),
\[
a^{r} X_{m,n,r}(b/a) \leq { \left( a^{1/2} + b^{1/2} \right) }^{2r}.
\]

If $|b/a|=1$, then from Lemma~\ref{lem:hypg-UB},
\[
\left| \frac{a^{r} D_{m,n,r}}{N_{d,m,n,r}} X_{m,n,r}(b/a) \right|
< 1.072 \frac{D_{m,n,r}}{N_{d,m,n,r}} \frac{r!\Gamma(1-m/n)}{\Gamma(r+1-m/n)} \left| \sqrt{a} + \sqrt{b} \right|^{2r}.
\]

The upper bound for $q_{r}$ now follows from this and \eqref{eq:cndn-defn}.

The lower bound for $q_{r}$ is an immediate consequence of the 
lower bound for $X_{m,n,r}(z)$ in Lemma~\ref{lem:hypg-LB}.
\end{proof}

We next determine how close the resulting approximations are to $(a/b)^{m/n}$.

\begin{lemma}
\label{lem:thm2-8}
Let $m$ and $n$ be relatively prime positive integers with $m<n/2$ and $n \geq 3$.
Let $r$ be a non-negative integer and let $a$ and $b$ be algebraic integers in
an imaginary quadratic field, $\bbK$, with
either $0<b/a<1$ a rational number or $|b/a|=1$ with $|b/a-1|<1$. Then
\begin{equation}
\label{eq:thm2-remest}
\left| \frac{a-b}{60naq_{r}} \right|
< \left| q_{r} (a/b)^{m/n} - p_{r} \right| 
< 1.22 \left| \frac{a-b}{b} \right| \cC_{n} \left( \frac{\cD_{n}}{\cN_{d,n}} \right)^{r} \left| \sqrt{a}-\sqrt{b} \right|^{2r}. 
\end{equation}  
\end{lemma}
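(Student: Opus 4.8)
The plan is to use the fundamental identity of Lemma~\ref{lem:relation} evaluated at $z=b/a$, multiplied through by $a^r D_{m,n,r}/N_{d,m,n,r}$, to get an exact expression for $q_r (a/b)^{m/n} - p_r$ in terms of $R_{m,n,r}(b/a)$. Indeed, from \eqref{eq:approx} with $z=b/a$,
\[
Y_{m,n,r}(b/a) - (a/b)^{m/n} X_{m,n,r}(b/a) = (b/a)^{-m/n}(b/a - 1)^{2r+1} R_{m,n,r}(b/a),
\]
and multiplying by $a^r D_{m,n,r}/N_{d,m,n,r}$ and using \eqref{eq:thm2-pr-qr} gives
\[
p_r - q_r (a/b)^{m/n} = \frac{a^r D_{m,n,r}}{N_{d,m,n,r}} (a/b)^{m/n} \left( \frac{b-a}{a} \right)^{2r+1} R_{m,n,r}(b/a).
\]
So up to a sign and the factor $|(a/b)^{m/n}| = 1$ (since $|b/a| \le 1$ and the power is real; in the rational case this needs a moment's care but is harmless), we have $|q_r (a/b)^{m/n} - p_r| = |a|^r (D_{m,n,r}/N_{d,m,n,r}) \, |a-b|^{2r+1}|a|^{-2r-1} |R_{m,n,r}(b/a)|$. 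Then I would unpack $R_{m,n,r}$ via its definition \eqref{eq:Rmnr-defn}, writing
\[
|R_{m,n,r}(b/a)| = \left| \frac{(m/n)\cdots(r+m/n)}{(r+1)\cdots(2r+1)} \right| \cdot \left| {}_2F_1(r+1-m/n, r+1; 2r+2; 1-b/a) \right|.
\]

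For the \textbf{upper bound}, I would bound the hypergeometric factor from above. The cleanest route is the standard estimate comparing $|{}_2F_1(r+1-m/n,r+1;2r+2;1-b/a)|$ against a value at a real point; since $|1-b/a| < 1$ and $\Real(b/a) \ge 0$ (which holds because $|b/a|=1$ with $|b/a-1|<1$ forces $\Real(b/a) > 1/2$, and in the rational case $0 < b/a < 1$), one gets an upper bound of the shape $C \cdot |1+\sqrt{b/a}|^{2r}$ for an absolute constant — this is exactly the type of bound underlying Lemma~\ref{lem:hypg-UB}(a), and the constant $1.072$ there is the source of the $1.22$ in the claim (with a little slack absorbed). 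Combining with the Pochhammer ratio, which I would bound using $D_{m,n,r}$-free Gamma estimates and fold into $\cC_n(\cD_n/\cN_{d,n})^r$ via \eqref{eq:cndn-defn} (the middle entry $n\Gamma(r+1+m/n)/(m\Gamma(m/n)r!)$ is precisely the reciprocal of the Pochhammer ratio up to the factor $m/n$), and using $|a|^r|1+\sqrt{b/a}|^{2r}|a-b|^{2r}|a|^{-2r} = |\sqrt a - \sqrt b|^{-2r}\cdot(\text{something})$ — wait, more carefully $|a-b|^{2r}|a|^{-r}|1+\sqrt{b/a}|^{2r}$: since $a-b = (\sqrt a - \sqrt b)(\sqrt a + \sqrt b)$ and $|a|^{1/2}|1+\sqrt{b/a}| = |\sqrt a + \sqrt b|$, the $|\sqrt a + \sqrt b|$ factors cancel leaving $|\sqrt a - \sqrt b|^{2r}$, plus the leftover $|a-b|/|a| \cdot |a|/|b|^{?}$ bookkeeping that produces the $|(a-b)/b|$ out front. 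Collecting gives the claimed RHS.

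For the \textbf{lower bound}, the key is a \emph{lower} bound on $|R_{m,n,r}(b/a)|$. Here I would invoke Lemma~\ref{lem:7}: since $|1-b/a|$ corresponds to $1-z$ with $|z|=1$, $\Real(z) \ge 0$ (or $z$ real in $[0,1]$ — either way covered by the remark after Lemma~\ref{lem:7}), we get $|{}_2F_1(r+1-m/n, r+1; 2r+2; 1-b/a)| \ge 1$. Then the Pochhammer ratio is bounded below using Lemma~\ref{lem:denom-LB2}: that lemma gives $D_{m,n,r}^2 \cdot \frac{(m/n)\cdots(r+m/n)}{(r+1)\cdots(2r+1)} > m/(60n)$, and since $m \ge 1$ this is $> 1/(60n)$. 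So $|q_r(a/b)^{m/n} - p_r| = |a|^r (D_{m,n,r}/N_{d,m,n,r})|a-b|^{2r+1}|a|^{-2r-1}|R_{m,n,r}(b/a)| \ge (D_{m,n,r}/N_{d,m,n,r}) |a-b|^{2r+1}|a|^{-r-1} \cdot \frac{1}{60n D_{m,n,r}^{?}}$ — the point is that one factor of $D_{m,n,r}$ in $q_r$ will cancel against the $D_{m,n,r}^2$ in the denominator bound, and after recognizing $|a-b|^{2r}|a|^{-r}$ against the lower bound for $q_r$ from Lemma~\ref{lem:hypg-LB}/\eqref{eq:thm2-q-est}, namely $q_r \ge (D_{m,n,r}/N_{d,m,n,r})(|a||1+\Real(b/a)|)^r$, the quantity $|a-b|^{2r}/(|a|^r|1+\Real(b/a)|^r)$ collapses. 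I'd verify $|a-b|^2 = |a|\,|1+\Real(b/a)|\cdot(\text{unit circle identity})$: when $|b/a|=1$, $|a-b|^2 = |a|^2|1-b/a|^2 = |a|^2(2 - 2\Real(b/a))$, hmm that's not quite $|a|^2(1+\Real(b/a))$, so there is a genuine constant to track — but it stays absolute, and the worst case feeds into the $60n$. The upshot is $|q_r(a/b)^{m/n} - p_r| > |a-b|/(60n|a|q_r)$ after dividing through, matching the claim with $|(a-b)/(60naq_r)|$.

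The \textbf{main obstacle} is the lower bound: one must be careful that the single power of $D_{m,n,r}$ appearing in $q_r$ (not two) is enough to absorb the $D_{m,n,r}^2$ from Lemma~\ref{lem:denom-LB2} — this works precisely because the identity writes the difference $q_r(a/b)^{m/n}-p_r$ with a factor $D_{m,n,r}/N_{d,m,n,r}$ out front \emph{and} a Pochhammer ratio inside $R_{m,n,r}$, and then we re-express everything in terms of $q_r$ itself (which already carries one $D_{m,n,r}$), so effectively we need $q_r \cdot [\text{Pochhammer ratio}] \cdot D_{m,n,r} \gtrsim$ const, i.e.\ $D_{m,n,r}^2 \cdot [\text{Pochhammer}] \gtrsim$ const, which is exactly Lemma~\ref{lem:denom-LB2}. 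A secondary nuisance is handling the two cases ($b/a$ real versus $|b/a|=1$) uniformly for the hypergeometric estimates and keeping the $|(a/b)^{m/n}|=1$ normalization honest in the rational case where the principal-value convention must be checked. None of this is deep, but the bookkeeping of absolute constants to land exactly on $1.22$ and $60n$ is where the care goes.
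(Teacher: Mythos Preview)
Your lower-bound strategy is essentially the paper's: multiply and divide by $q_r$, use the lower bound $q_r \ge (D_{m,n,r}/N_{d,m,n,r})(|a|(1+\Real(b/a)))^r$ from \eqref{eq:thm2-q-est}, invoke Lemma~\ref{lem:7} for $|{}_2F_1|\ge 1$, and then Lemma~\ref{lem:denom-LB2}. But you are missing one key ingredient that makes the algebra close with no leftover constant: the bound $N_{d,m,n,r}\le |a-b|^r$ (which follows from the definition of $N_{d,m,n,r}$ with $d=(a-b)^2$). With this, $(D_{m,n,r}/N_{d,m,n,r})^2|a-b|^{2r}\ge D_{m,n,r}^2$, and then Lemma~\ref{lem:denom-LB2} plus $(1+\Real(b/a))^r\ge 1$ give exactly $|(a-b)/(60naq_r)|$ with nothing to ``feed into the $60n$''. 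Your attempted identity $|a-b|^2=|a|\,|1+\Real(b/a)|\cdot(\ldots)$ is a red herring; no such relation is needed.

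Your upper-bound approach, however, has a genuine gap. Lemma~\ref{lem:hypg-UB} bounds $X_{m,n,r}(z)={}_2F_1(-r,-r-m/n;1-m/n;z)$, a polynomial; it says nothing about the \emph{remainder} hypergeometric ${}_2F_1(r+1-m/n,r+1;2r+2;1-z)$ appearing in $R_{m,n,r}$, which is a different function entirely. In particular your claimed shape $C|1+\sqrt{b/a}|^{2r}$ has the wrong sign in the exponent (the remainder hypergeometric behaves like $|1+\sqrt{b/a}|^{-2r}$ up to the Gamma factors). The paper instead treats the two cases separately. For $|b/a|=1$ it invokes Lemma~2.5 of \cite{CV}, which gives
\[
|R_{m,n,r}(b/a)|\le \frac{\Gamma(r+1+m/n)}{r!\,\Gamma(m/n)}\,|\varphi|\,\bigl|1-\sqrt{b/a}\bigr|^{2r},
\quad b/a=e^{i\varphi},
\]
and the constant $1.22$ arises from the elementary estimate $|\varphi|\le 1.22\,|(a-b)/b|$ (valid because $|b/a-1|<1$ forces $|\varphi|<1.05$), not from the $1.072$ in Lemma~\ref{lem:hypg-UB}. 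For $0<b/a<1$ rational, the paper bounds ${}_2F_1(r+1-m/n,r+1;2r+2;1-b/a)$ directly via Pochhammer's integral representation, estimating the integrand pointwise; this yields the bound with constant $1$ (so $1.22$ covers both cases). After applying \eqref{eq:cndn-defn} to absorb $D_{m,n,r}/N_{d,m,n,r}$ times the Gamma ratio, the $|1-\sqrt{b/a}|^{2r}$ factor combines with $|a|^r$ to give $|\sqrt a-\sqrt b|^{2r}$ directly.
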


\begin{proof}
Using our definitions of $p_{r}$ and $q_{r}$ in \eqref{eq:thm2-pr-qr}, and of
$R_{m,n,r}(z)$ in \eqref{eq:Rmnr-defn}, along with the relation in \eqref{eq:approx}
in Lemma~\ref{lem:relation} with $z=b/a$, we find that
\begin{eqnarray*}
p_{r}-q_{r} (a/b)^{m/n}
& = & \left( \frac{a}{b} \right)^{m/n} \frac{a^{r} D_{m,n,r}}{N_{d,m,n,r}}
      { \left( \frac{b-a}{a} \right) }^{2r+1} 
      \frac{(m/n) \cdots (r+m/n)}{(r+1) \cdots (2r+1)} \\
&   & \times {}_{2} F_{1} \left( r+1-m/n, r+1; 2r+2; (a-b)/a \right). 
\end{eqnarray*}

Multiplying the top and bottom of the right-hand side by $q_{r}$, applying the
lower bound for $q_{r}$ in \eqref{eq:thm2-q-est} in Lemma~\ref{lem:thm2-pq-ub},
along with \eqref{eq:LB} and then simplifying, we obtain
\[
\left| q_{r} (a/b)^{m/n} - p_{r} \right|
> \left| \left( \frac{D_{m,n,r}}{N_{d,m,n,r}} \right)^{2} 
  \frac{(m/n) \cdots (r+m/n)}{(r+1) \cdots (2r+1)}
  (a-b)^{2r}(1+\Real(b/a))^{r}
  \frac{a-b}{aq_{r}} \right|.
\]

Recalling the definition of $N_{d,m,n,r}$, that $d=(a-b)^{2}$ and that $X_{m,n,r}(z)$
is a monic polynomial, it follows that $N_{d,m,n,r} \leq (a-b)^{r}$. Using this,
along with Lemma~\ref{lem:denom-LB2}, yields
\[
\left| q_{r} (a/b)^{m/n} - p_{r} \right|
> \left| (1+\Real(b/a))^{r}
  \frac{a-b}{60naq_{r}} \right|.
\]

The desired lower bound now follows since $\Real(b/a) \geq 0$.

For the upper bound, we consider $b/a$ rational with $0<b/a<1$ and
$|b/a|=1$ separately.

To obtain the upper bound when $|b/a|=1$, we start by writing
$b/a=\exp \left( \varphi i \right)$ with $-\pi<\varphi \leq \pi$ and showing
that
\begin{equation}
\label{eq:phi-UB}
|\varphi| \leq 1.22|(a-b)/b|
\end{equation}
for $|b/a-1|<1$.

If $|b/a-1|<1$, then $|\varphi|<1.05$. Using the well-known inequality
$2|\varphi|/\pi \leq |\sin(\varphi)|$ for all $|\varphi| \leq \pi$, we obtain
$|\varphi|<1.22|\sin(\varphi)|=1.22|\Imag(b/a)|$ for $|\varphi|<1.05$.
We can write $b/a=1-(a-b)/a$ and so $\Imag(b/a)=\Imag( 1-(a-b)/a )
=-\Imag( (a-b)/a )$. Since $|\Imag(z)| \leq |z|$ for any complex number, $z$,
equation~\eqref{eq:phi-UB} follows.

We apply Lemma~2.5 of \cite{CV}:
\[
\left| R_{m,n,r}(b/a) \right|
\leq \frac{\Gamma(r+1+m/n)}{r!\Gamma(m/n)} |\varphi| \left| 1-\sqrt{b/a} \right|^{2r},
\]
where $|b/a-1|<1$ and $b/a=\exp \left( \varphi i \right)$. So
\begin{align*}
\left| \left( \frac{a}{b} \right)^{m/n} \frac{a^{r} D_{m,n,r}}{N_{d,m,n,r}} R_{m,n,r}(b/a) \right|
& \leq \frac{a^{r} D_{m,n,r}}{N_{d,m,n,r}} \frac{\Gamma(r+1+m/n)}{r!\Gamma(m/n)} |\varphi| \left| 1-\sqrt{b/a} \right|^{2r} \\
& < 1.22 \left| \frac{a-b}{b} \right| \cC_{n} \left( \frac{\cD_{n}}{\cN_{d,n}} \right)^{r} \left| \sqrt{a}-\sqrt{b} \right|^{2r},
\end{align*}
by \eqref{eq:phi-UB} and using \eqref{eq:cndn-defn}.

To obtain the upper bound when $0<b/a<1$ is rational, we use Pochhammer's integral
(see equation~(1.6.6) of \cite{Sl}). We can write
\[
{}_{2} F_{1} \left( r+1-m/n, r+1; 2r+2; z \right)
=\frac{\Gamma(2r+2)}{\Gamma(r+1)\Gamma(r+1)} \int_{0}^{1} t^{r}(1-t)^{r}(1-zt)^{-r-1+m/n}dt.
\]

If $0<z<1$, then $(1-zt)^{-1+m/n}$ is monotonically increasing as $t$ goes from
$0$ to $1$, since $m/n<1$, so its maximum value occurs at $t=1$. I.e.,
$(1-z)^{-1+m/n} \leq (1-z)^{-1+m/n}$. Here $z=1-b/a$, so this maximum is
$(b/a)^{-1+m/n}=(a/b)^{1-m/n}$.

Also, the function $t(1-t)(1-t(a-b)/a)^{-1}$ takes its maximum value at
$t=\sqrt{a}/\left( \sqrt{a}+\sqrt{b} \right)$, where it takes the value
$a/ \left( \sqrt{a}+\sqrt{b} \right)^{2}$.

Hence
\[
\left| \int_{0}^{1} t^{r}(1-t)^{r}(1-zt)^{-r-1+m/n}dt \right|  
\leq (a/b)^{1-m/n} \left\{ a { \left( a^{1/2}+b^{1/2} \right) }^{-2} \right\}^{r}
\]
and so
\begin{eqnarray*}
\left| q_{r} (a/b)^{m/n} - p_{r} \right|
& \leq & a^{r} \frac{D_{m,n,r}}{N_{d,m,n,r}} { \left( \frac{a}{b} \right) }^{m/n} 
         { \left( \frac{a-b}{a} \right) }^{2r+1}
      \frac{\Gamma(r+1+m/n) \Gamma(r+1)}{\Gamma(2r+2)\Gamma(m/n)} \\
&      & \times \frac{\Gamma(2r+2)}{\Gamma(r+1)\Gamma(r+1)}
         (a/b)^{1-m/n} \left\{ a { \left( a^{1/2}+b^{1/2} \right) }^{-2} \right\}^{r} \\
&   =  & \frac{D_{m,n,r}}{N_{d,m,n,r}}
         { \left( \frac{a-b}{b} \right) }
         \frac{\Gamma(r+1+m/n)}{\Gamma(r+1)\Gamma(m/n)} \left( a^{1/2}-b^{1/2} \right)^{2r} \\
&   =  & \cC_{n} \left( \frac{\cD_{n}}{\cN_{d,n}} \right)^{r}
         \left( \frac{a-b}{b} \right) \left( a^{1/2}-b^{1/2} \right)^{2r},
\end{eqnarray*}
after simplifying and using \eqref{eq:cndn-defn}.
\end{proof}

\section{Proof of Theorem~\ref{thm:gen-hypg2}}

By the lower bound in Lemma~\ref{lem:thm2-8}, we can take $c$ in Theorem~\ref{thm:gen-hypg2}
to be $60n|a|$ when $p/q=p_{i}/q_{i}$ for some non-negative integer $i$. So we
need only prove Theorem~\ref{thm:gen-hypg2} for those numbers $p/q \neq 
p_{i}/q_{i}$ for any non-negative integer $i$. 

All that is required is a simple application of Lemma~\ref{lem:approx} using
Lemmas~\ref{lem:thm2-pq-ub} and \ref{lem:thm2-8} to provide the values of
$k_{0},\ell_{0},E$ and $Q$. 

From these last two lemmas, we can choose $k_{0} = 1.072\cC_{n}$, 
$E = \left( \cN_{d,n}/\cD_{n} \right) \left| a^{1/2} - b^{1/2} \right|^{-2}$ 
and $Q = \left( \cD_{n}/\cN_{d,n} \right) \left| a^{1/2} + b^{1/2} \right|^{2}$.

From equation~\eqref{eq:thm2-remest} in Lemma~\ref{lem:thm2-8}, an obvious choice for
$\ell_{0}$ would be $\ell_{0} = 1.22|(a-b)/b|\cC_{n}$. However, with this choice
$2\ell_{0}E<1$ if $|a-b|$ is not small, so the conditions in Lemma~\ref{lem:approx}
are not satisfied. We do not want to increase the size of $2\ell_{0}E$ too much,
as otherwise the dependence of $c$ and $a$ will increase. So we will choose
$\ell_{0}$ proportional to $1/E$, namely $\ell_{0}=c_{1}\cC_{n}\cD_{n}\left| a^{1/2} - b^{1/2} \right|^{2}$
for some absolute constant $c_{1}>1$ such that
$\ell_{0}>1.22|(a-b)/b|\cC_{n}$. With such a choice, the condition
$\left| q_{r}\theta-p_{r} \right| \leq \ell_{0}E^{-r}$ in Lemma~\ref{lem:approx}
will hold and $2\ell_{0}E=c_{1}n\mu_{n}\cC_{n} \geq 1$ will also hold.

We first determine $c_{2} \geq 1$ such that
\[
\frac{|a-b|}{b} \leq c_{2} \left| a^{1/2} - b^{1/2} \right|^{2}
=c_{2} |b| \left| 1-(a/b)^{1/2} \right|^{2}
=c_{2} |b| \left| 1-(1+(a-b)/b)^{1/2} \right|^{2}.
\]

If $a=a_{R}+a_{I}i$, then $(a-b)/b=2a_{I}i/ \left( a_{R}-a_{I}i \right)$, so
$|(a-b)/b| \leq 2$.
Using the series expansion of $\sqrt{1+z}$, we find that
$\left| 1-\sqrt{1+z} \right| \geq \left( \sqrt{3}-1 \right)|z|/2$ for all
$|z| \leq 2$. Applying this inequality with $z=(a-b)/b$, we have
\[
\left| 1-(1+(a-b)/b)^{1/2} \right| \geq \left( \sqrt{3}-1 \right) |(a-b)/b|/2.
\]

So $\left| a^{1/2} - b^{1/2} \right|^{2}
\geq \left( \sqrt{3}-1 \right)^{2} \left| (a-b)^{2}/(4b) \right|
\geq \left( \sqrt{3}-1 \right)^{2} \left| (a-b)/(4b) \right|$.

Hence we can take $c_{2}=2\left( 2+\sqrt{3} \right)$ and put
\[
\ell_{0} = 9.2 \cC_{n} \cD_{n} \left| a^{1/2} - b^{1/2} \right|^{2},
\]
so that $\ell_{0}>1.22|(a-b)/b|\cC_{n}$.
Also,
\[
2\ell_{0}E=2 \cdot 9.2 \cC_{n} \cD_{n} \left| a^{1/2} - b^{1/2} \right|^{2}
\left( \cN_{d,n}/\cD_{n} \right) \left| a^{1/2} - b^{1/2} \right|^{-2}
=18.4\cC_{n}\cN_{d,n} \geq 18.4.
\]

Lemma~\ref{lem:thm2-pq-ub} ensures that 
$p_{r}q_{r+1} \neq p_{r+1}q_{r}$. In addition, as we saw above, $\cN_{d,n} \leq |a-b|$
and $\cD_{n} \geq 1$, so $Q \geq \left| a^{1/2} + b^{1/2} \right|^{2}/|a-b| > 1$ 
and $\ell_{0} > 0$ since $a \neq b$.
If $E > 1$, then we can use Lemma~\ref{lem:approx}. 

Lastly, we consider the quantity $c$ in Lemma~\ref{lem:approx}. Using the above
expressions for $2\ell_{0}E$, we can write it
as
\[
2.15\cC_{n}\left( 18.4 \cC_{n}\cN_{d,n} \right)^{\kappa}
\leq 3|a|\cC_{n}\left( 20 \cC_{n}\cN_{d,n} \right)^{\kappa},
\]
since $\cN_{d,n} \leq n\mu_{n}$. Noting that $\kappa>1$, we see that this is
also larger than $60n|a|$, completing the proof of Theorem~\ref{thm:gen-hypg2}.

\section{Proof of Theorem~\ref{thm:dn-UB}}
\label{sect:thm2-proof}

For $n=3$, we use the bounds already established in Lemma~5.1(b) of \cite{Vout1}.

The first two subsections of this section provide the proof of parts~(a) and (b)
of the theorem. We proceed in two steps for each $4 \leq n \leq 1009$.\\
(1) we determine $r_{\text{comp}}$, such that for all $m$
and all $r \geq r_{\text{comp}}$, we can use more-or-less analytic techniques to
show that our choice of $\cD_{n}$ works with $\cC_{n}=1$. So in this step, we
also determine the value of $\cD_{n}$ we will use.\\
(2) for all $m$ and all
$r<r_{\text{comp}}$, we essentially calculate directly the quantity within the
outer $\max$ on the left-hand side of equation~\eqref{eq:cndn-defn}. It is in this way
that we determine the value of $\cC_{n}$ that we need.

We prove part~(c) in the last subsection of this section.

\subsection{Determining $\cD_{n}$ and $r_{\text{comp}}$}
\label{subsect:cDn}

We shall use estimates for each of the quantities on the left-hand side of
\eqref{eq:cndn-defn}: the $\Gamma$ function quantities, the numerator and the
denominator.

To estimate the denominator, $D_{m,n,r}$, we divide the prime divisors of $D_{m,n,r}$
into two sets, according to their size. We let $D_{m,n,r}^{(S)}$ denote the
contribution to $D_{m,n,r}$ from primes at most $(nr)^{1/2}$ and let $D_{m,n,r}^{(L)}$
denote the contribution from the remaining, larger, primes. 

\subsubsection{Numerator upper bounds}

Put $d_{1}=\gcd \left( d, n^{2} \right)$ and $d_{2}=\gcd \left( d/d_{1}, n^{2} \right)$,
as in \cite{Vout4}.
By Lemma~6 of \cite{Vout4}, we have
\[
\frac{\cN_{d,n}^{r}}{N_{d,m,n,r}}
\leq \frac{\prod_{p \mid n} p^{r\min( v_{p}(d)/2, v_{p}(n)+1/(p-1))}}
{d_{1}^{\lfloor r/2 \rfloor}
\prod_{p|d_{2}} p^{\min ( \lfloor v_{p}(d_{2})r/2 \rfloor, v_{p}(r!))}}.
\]

We examine the terms in the products on the right-hand side and consider three
possibilities.

(i) If $v_{p}(n)>0$ and $v_{p} \left( d_{2} \right)=0$, then $v_{p}(d)/2 \leq v_{p}(n)$,
so
\[
p^{r\min( v_{p}(d)/2, v_{p}(n)+1/(p-1))}
=p^{rv_{p}(d)/2}=p^{rv_{p}(d_{1})/2}.
\]

(ii) If $p \geq 3$ and $p \mid d_{2}$, or if $p=2$ and
$v_{2}\left( d_{2} \right) \geq 2$, then
$p^{\min( v_{p}(d)/2, v_{p}(n)+1/(p-1))}=p^{v_{p}(n)+1/(p-1)}$.
Furthermore, if $p \geq 3$ and $p \mid d_{2}$, then
\[
v_{p}(r!) \geq \min \left( \lfloor v_{p}\left( d_{2} \right)r/2 \rfloor, v_{p}(r!) \right)
\geq \min \left( \lfloor r/2 \rfloor, v_{p}(r!) \right)=v_{p}(r!),
\]
since
$v_{p}(r!) \leq r/(p-1)$. Similarly, if $p=2$ and
$v_{2}\left( d_{2} \right) \geq 2$, then
\[
v_{p}(r!) \geq \min \left( \lfloor v_{p}\left( d_{2} \right)r/2 \rfloor, v_{p}(r!) \right)
\geq \min \left( \lfloor r \rfloor, v_{p}(r!) \right)=v_{p}(r!).
\]
Thus
\[
\frac{p^{r\min( v_{p}(d)/2, v_{p}(n)+1/(p-1))}}
{p^{\min ( \lfloor v_{p}(d_{2})r/2 \rfloor, v_{p}(r!))}}
=\frac{p^{r\left( v_{p}(n)+1/(p-1)\right)}}{p^{v_{p}(r!)}}
=\frac{p^{r\left( v_{p}(d_{1})/2+1/(p-1)\right)}}{p^{v_{p}(r!)}},
\]
the last equality holding because $v_{p} \left( d_{1} \right)v_{p} \left( n^{2} \right)$
when $v_{p} \left( d_{2} \right) \geq 1$.

(iii) Lastly, if $p=2$ and $v_{p} \left( d_{2} \right)=1$, then
\[
\min \left( v_{p}(d)/2, v_{p}(n)+1/(p-1) \right)
=\min \left( v_{p}\left( d_{1} \right)+1/2, v_{p}(n)+1 \right).
\]
Since
$v_{p} \left( d_{2} \right)>0$, it follows that
\[
\min \left( v_{p}\left( d_{1} \right)+1/2, v_{p}(n)+1 \right)
=v_{p}\left( d_{1} \right)+1/2.
\]

Also $\min \left( \lfloor v_{p}\left( d_{2} \right)r/2 \rfloor, v_{p}(r!) \right)
=\min \left( \lfloor r/2 \rfloor, v_{p}(r!) \right)
=\lfloor r/2 \rfloor$. So
\[
\frac{p^{r\min( v_{p}(d)/2, v_{p}(n)+1/(p-1))}}
{p^{\min ( \lfloor v_{p}(d_{2})r/2 \rfloor, v_{p}(r!))}}
= 2^{rv_{2}(d_{1})/2} 2^{r/2-\lfloor r/2 \rfloor}
\leq 2^{rv_{2}(d_{1})/2} 2^{r/(2-1)-v_{2}(r!)}.
\]

So we always have
\begin{equation}
\label{eq:numerbnd-1}
\frac{\cN_{d,n}^{r}}{N_{d,m,n,r}}
\leq 
\frac{d_{1}^{r/2}}
{d_{1}^{\lfloor r/2 \rfloor}}
\prod_{p|d_{2}} p^{r/(p-1)-v_{p}(r!)}.
\end{equation}

For $r \geq 1$, we have
\[
0 \leq r/(p-1)-v_{p}(r!) \leq (\log r)/(\log p)+1/(p-1)
\]
(the worst case being $r=1$). Therefore,
\begin{equation}
\label{eq:numerbnd-2}
\frac{\cN_{d,n}^{r}}{N_{d,m,n,r}} \leq n\mu_{n}r^{\omega(n)},
\end{equation}
where $\omega(n)$ is the number of distinct prime factors of $n$.

At least for $r=1$, there are examples showing that this upper bound is sharp.
For larger $r$, it can also be not bad.

\subsubsection{$\Gamma$-term upper bounds}

When considering the $\Gamma$-term estimates in the proof of Lemma~7.4(c0 of
\cite{Vout2}, we showed that
\[
\max \left( 1, \frac{\Gamma(1-m/n) \, r!}{\Gamma(r+1-m/n)},
      \frac{n\Gamma(r+1+m/n)}{m\Gamma(m/n)r!} \right) \frac{\cN_{d,n}^{r}}{N_{d,m,n,r}}
< \frac{n}{n-m} e^{m^{2}/n^{2}}r^{m/n},
\]
for $n \geq 2$. Since $m<n/2$, we have
\begin{equation}
\label{eq:fact}
\max \left( 1, \frac{\Gamma(1-m/n) \, r!}{\Gamma(r+1-m/n)},
      \frac{n\Gamma(r+1+m/n)}{m\Gamma(m/n)r!} \right) \frac{\cN_{d,n}^{r}}{N_{d,m,n,r}}
< (n/2) e^{1/4}r^{1/2}.
\end{equation}

\subsubsection{$D_{m,n,r}^{(S)}$ upper bounds}
\label{subsubsect:DmnrS-UB}

From Lemma~3.3(a) of \cite{Vout1}, we know that 
\[
D_{m,n,r}^{(S)} \leq \prod_{(nr)^{1/3}< p \leq (nr)^{1/2}} p^{2}
\prod_{(nr)^{1/4} < p \leq (nr)^{1/3}} p^{3}
\prod_{p \leq (nr)^{1/4}} p^{\lfloor \log(nr)/(\log(p)) \rfloor}. 
\]
So
\[
\log D_{m,n,r}^{(S)} \leq 2\theta \left( (nr)^{1/2} \right)
+\theta \left( (nr)^{1/3} \right)-3\theta \left( (nr)^{1/4} \right)
+\sum_{p \leq (nr)^{1/4}} \lfloor \log(nr)/(\log(p)) \rfloor \log(p). 
\]

Now $\lfloor x \rfloor \leq 4 \lfloor x/4 \rfloor +3$, so
\[
\sum_{p \leq (nr)^{1/4}} \lfloor \log(nr)/(\log(p)) \rfloor \log(p)
\leq 4\psi \left( (nr)^{1/4} \right) + 3\theta \left( (nr)^{1/4} \right),
\]
where $\displaystyle\theta(x)=\sum_{\stackrel{p \leq x}{p, \text{ prime}}} \log(p)$ and
$\displaystyle\psi(x)=\sum_{\stackrel{p^{n} \leq x}{p, \text{ prime}}} \log(p)$.

Thus,
\begin{eqnarray}
\label{eq:dnrs}
D_{m,n,r}^{(S)}
& \leq & \exp \left\{ 2\theta \left( (nr)^{1/2} \right)
+\theta \left( (nr)^{1/3} \right) +4\psi \left( (nr)^{1/4} \right) \right\} \nonumber \\
&  <   & \exp \left\{ 2.033(nr)^{1/2}+1.017(nr)^{1/3}+4.156(nr)^{1/4} \right\},
\end{eqnarray}
from Theorems~9 and 12 of \cite{RS}.

From equations \eqref{eq:numerbnd-2}, \eqref{eq:fact} and \eqref{eq:dnrs}, we
obtain
\begin{eqnarray}
\label{eq:small}
& & \max \left( 1, \frac{\Gamma(1-m/n) \, r!}{\Gamma(r+1-m/n)},
\frac{n\Gamma(r+1+m/n)}{m\Gamma(m/n)r!} \right) \frac{\cN_{d,n}^{r}}{N_{d,m,n,r}}
D_{m,n,r}^{(S)} \nonumber \\
& < & 0.65n^{2}\mu_{n}r^{\omega(n)+1/2}\exp \left\{ 2.033(nr)^{1/2}+1.017(nr)^{1/3}+4.156(nr)^{1/4} \right\}.
\end{eqnarray}
For convenience in what follows, we will denote this last quantity as $S(n,r)$.

\subsubsection{Upper and lower bounds for $\theta(x;n,k)$}

To obtain an upper bound for $D_{m,n,r}^{(L)}$, we need upper and lower bounds
for $\theta \left( x; n, k \right)$. We want bounds
of the form $\epsilon_{x,n,k}^{(L)} x < \theta(x;n,k)-x/\varphi(n) <\epsilon_{x,n,k}^{(U)} x$.
For this, we use the results in \cite{Ben2} and some computation.

Combining equations~\eqref{eq:theta-UB} and \eqref{eq:theta-UBsqrt} in
Lemma~\ref{lem:bennett}, we find that
the upper bound for
$\left| \theta(x;n,k) - x/\varphi(n) \right|$ in \eqref{eq:theta-UB} holds for
$x \geq 1.8 \cdot 10^{9}$ when $101 \leq n \leq 1009$ and for $x \geq 2.1 \cdot 10^{9}$
when $4 \leq n \leq 100$.

For each $4 \leq n \leq 1009$, we compute $\theta(x;n,k)$ for all $1 \leq k<n$ with
$\gcd(k,n)=1$ and for all $x \leq 2.1 \cdot 10^{9}$ to find the last value of
$x$, $X_{n}$, that breaches \eqref{eq:theta-UB} for any $1 \leq k<n$ with
$\gcd(k,n)=1$.

However, these $X_{n}$'s are still quite large (e.g., $X_{4}=1,472,117,809$),
which means that $r$ would have to be quite large for \eqref{eq:theta-UB} to give
a good upper bound for $D_{m,n,r}^{(L)}$. So we break the interval $\left[ 1, X_{n}+2000 \right]$
into $\lfloor X_{n}/2000 \rfloor +2$ subintervals of size $2000$, $I_{i}=[2000(i-1)+1,2000i]$,
and compute to obtain values $\epsilon_{LB,i}$ and
$\epsilon_{UB,i}$ such that if $x \geq 2000(i-1)+1$, then
$\epsilon_{LB,i} x < \theta(x; n,k)-x/\varphi(n) < \epsilon_{UB,i} x$ for all
$1 \leq k \leq n$ with $\gcd(k,n)=1$.

For any positive real number $x$, let $i$ be the largest positive integer such
that $x \geq 2000(i-1)+1$. we will let $\theta_{UB}(x;n)=x/\varphi(n)+\epsilon_{UB,i}x$
and $\theta_{LB}(x;n)=x/\varphi(n)-\epsilon_{LB,i}x$. Note that
$\theta_{UB}(x;n) > \theta(x;n,k) > \theta_{LB}(x;n)$ for all
$1 \leq k \leq n$ with $\gcd(k,n)=1$. This notation will
be convenient for us in what follows.

\subsubsection{$D_{m,n,r}^{(L)}$ upper bounds}

From Lemma~3.3(b) of \cite{Vout1}, we see that for any positive integer $N$
satisfying $nr/(nN+n/2) \geq (nr)^{1/2}$, we have 
\begin{eqnarray}
\label{eq:DL-defn}
D_{m,n,r}^{(L)} 
& \leq & \exp \left\{ \sum_{A=0}^{N-1} \sum_{\ell=1,(\ell,n)=1}^{n/2} \left( \theta(nr/(nA+\ell);n,k_{\ell})  
		      - \theta(nr/(nA+n-\ell);n,k_{\ell}) \right) \right\} \nonumber \\
& & \times \exp \left\{ \sum_{\ell=1,(\ell,n)=1}^{n/2} \theta(nr/(nN+\ell);n, k_{\ell}) \right\} \nonumber \\
& < & \exp \left\{ \sum_{A=0}^{N-1} \sum_{\ell=1,(\ell,n)=1}^{n/2} \left( \theta_{UB}(nr/(nA+\ell);n)  
		      - \theta_{LB}(nr/(nA+n-\ell);n) \right) \right\} \\
& & \times \exp \left\{ \sum_{\ell=1,(\ell,n)=1}^{n/2} \theta_{UB}(nr/(nN+\ell);n) \right\}, \nonumber
\end{eqnarray}
where $k_{\ell} \equiv (-m)\ell^{-1} \bmod n$. We will denote the last quantity
as $D^{(L)}(N,n,r)$.

\subsubsection{Combining the bounds}

Combining equations \eqref{eq:small} and \eqref{eq:DL-defn}, we find that the
left-hand side of \eqref{eq:cndn-defn} is less than $S(n,r)D^{(L)}(N,n,r)$.

Incrementing $r$ in steps of size $100,000$ and checking positive integers $N$
up to $200$,
we determined $\log \left( S(n,r)D^{(L)}(N,n,r) \right)$ for each pair
$(r,N)$ and then chose the values of $r$ and $N$ (we denote this $r$ by
$r_{{\rm comp}}$) such that
$\log \left( S(n,r)D^{(L)}(N,n,r) \right)/r$ is as small as possible to obtain
an upper bound for left-hand side of \eqref{eq:cndn-defn}
once
$r \geq r_{{\rm comp}}$. For $n \geq 223$, we also cap $r_{{\rm comp}}$ by $10^{7}$
to make the computations more feasible.
This is the value we will use for $\log \cD_{n}$.
E.g., for $n=4$, $N=90$ and $r_{\rm comp}=39,900,000$, this
suggests using $\log \cD_{3}=1.58$.

We now know $\cD_{n}$ as well as how much computation is required to establish
our desired inequalities for all $r \geq 0$ (a computation which will yield
$\cC_{n}$), so we are ready to describe the required computations.

\subsection{Determining $\cC_{n}$ and checking $r<r_{\text{comp}}$}

For each pair $(m,n)$ with $1 \leq m<n/2$, $4 \leq n \leq 1009$ and $\gcd(m,n)=1$,
we take the following steps for each $0 \leq r < r_{{\rm comp}}$.

(1) We compute directly the $\Gamma$ terms in \eqref{eq:cndn-defn}, noting that
the value for $r$ can be computed from the value for $r+1$.

(2) We initially estimate the numerator, $\cN_{d,n}^{r}/N_{d,m,n,r}$ in fact,
using \eqref{eq:numerbnd-1}, where we
bound $d_{1}^{r/2-\lfloor r/2 \rfloor}$ from above by $n$ and take the product
over all primes dividing $n$, rather than $d_{2}$.

This is much faster than calculating the maximum possible value of
$\cN_{d,n}^{r}/N_{d,m,n,r}$ precisely over all values of $d$. However, if, for a
particular value of $r$, after the denominator steps that follow, this estimate
leads to a large value of $\cC_{n}$, then we do calculate $\cN_{d,n}^{r}/N_{d,m,n,r}$
more precisely using the expression for $X_{m,n,r} \left( 1-\sqrt{d}\, x \right)$
in terms of $d_{1}$, $d_{2}$ and $d_{3}$ in the proof of Lemma~6 in \cite{Vout4}.

(3) we initially use the upper bound
\[
D_{m,n,r}^{(S)} \leq \prod_{p \leq (nr)^{1/2}} p^{\lfloor \log(nr)/(\log(p)) \rfloor},
\]
which holds by Lemma~3.3(a) of \cite{Vout1}. We calculate the right-hand side
directly for each value of $m$, $n$ and $r$.

As in step~(2), if this upper bound leads to a large value of $\cC_{n}$, then
we calculate $D^{(S)}_{m,n,r}$ directly using Proposition~3.2 of \cite{Vout1}.

(4) we compute $D^{(L)}_{m,n,r}$ exactly using the same technique as in \cite{Vout1}
(see Step~(5) of the proof of Lemma~5.1(b) there) of using Lemma~3.3(b) there
and calculating the contributions from each interval and congruence class via the
endpoints of these intervals. The only difference is that here we grow what is
called $A(r)$ in \cite{Vout1} over the course of the calculation so
that $A(r)$ is the largest integer such that $nr/(nA(r)+n-\ell)>\sqrt{nr}$.

In this manner, for all $r<r_{{\rm comp}}$, we estimate the left-hand side of
\eqref{eq:cndn-defn} and hence find a value of $\cC_{n}$ that would work with
the value of $\cD_{n}$. If for any such $r$, the value of $\cC_{n}$ exceeds the
value of $\cC_{n}$ found for smaller values of $r$, then we use the more precise
methods for bounding $\cN_{d,n}^{r}/N_{d,m,n,r}$ and $D^{(S)}(m,n,r)$ described
in steps~(2) and (3) above to get a more precise upper bound for $\cC_{n}$. So
the maximum value of $\cC_{n}$ obtained in this way is the one that we use.

As part of these calculations, we also determined $\cD_{2,n}$ in Tables~\ref{table:1}
through \ref{table:7}.

All these calculations were performed using code written in the Java programming
language (JDK 16) and run on a Windows laptop with an Intel i7-9750H 2.60GHz CPU.
Unsurprisingly, the amount of time required for each value of $n$
increased with $n$. For example, for $n=229$, $2,175$ seconds of CPU time was used,
whereas for $n=1009$, the CPU time was $16,643$ seconds. The code is
available upon request.

\subsection{Proof of Theorem~\ref{thm:dn-UB}(c)}

It was shown in the proof of Lemma~7.4(d) of \cite{Vout2} that
\[
\max \left( 1, \frac{\Gamma(1-m/n) \, r!}{\Gamma(r+1-m/n)},
\frac{n\Gamma(r+1+m/n)}{m\Gamma(m/n)r!} \right) D_{m,n,r}
\leq n^{r} \mu_{n}^{r}.
\]

Applying equation~\eqref{eq:numerbnd-1}, if $d_{2}=1$, then part~(c) follows as
$d_{1} \leq n^{2}$.

\section{Thue's Fundamentaltheorem}

The initial hope for this work was to improve the constant not just for the
usual hypergeometric method, but for Thue's Fundamentaltheorem too (e.g.,
Theorem~1 in \cite{Vout4}, as well as the theorems in \cite{Vout2,Vout3}).

The two key parts of Thue's Fundamentaltheorem are the following.

(1) let $t$ be a rational integer which is not a perfect square and put
$\bbK = \bbQ \left( \sqrt{t} \right)$. Suppose that $\eta \in \cO_{\bbK}$
and that $\sigma$ is the non-trivial element of
$\Gal \left( \bbK/\bbQ \right)$. Then
$\sigma(\eta)^{r} X_{m,n,r} \left( \eta / \sigma(\eta) \right)$ and
$\sigma(\eta)^{r} Y_{m,n,r} \left( \eta / \sigma(\eta) \right)$
are algebraic conjugates in $\bbK$.

(2) the classical observation (see Lemma~\ref{lem:relation}) that
\[
\left( \eta/\sigma(\eta) \right)^{m/n} Y_{m,n,r}\left( \eta/\sigma(\eta) \right)
- X_{m,n,r} \left( \eta/\sigma(\eta) \right)
= \left( \eta/\sigma(\eta)-1 \right)^{2r+1} R_{m,n,r} \left( \eta/\sigma(\eta) \right),
\]
for $\left| \eta/\sigma(\eta)-1 \right|<1$.

\vspace*{3.0mm}

Due to (1), we can write
\[
\left( \eta/\sigma(\eta) \right)^{m/n} \sigma \left( \sigma(\eta)^{r}X_{m,n,r} \left( \eta/\sigma(\eta) \right) \right)
- \sigma(\eta)^{r}X_{m,n,r} \left( \eta/\sigma(\eta) \right)
= \left( \eta/\sigma(\eta)-1 \right)^{2r+1} \sigma(\eta)^{r}R_{m,n,r} \left( \eta/\sigma(\eta) \right),
\]

To simplify our notation, we will write $q_{r}=\sigma \left( \sigma(\eta)^{r}X_{m,n,r} \left( \eta/\sigma(\eta) \right) \right)$.

Let $\beta$ and $\gamma$ be two distinct non-rational algebraic integers in $\bbK$
and put
\[
\alpha = \frac{\beta+\sigma(\beta)\left( \eta/\sigma(\eta) \right)^{m/n}}
{\gamma+\sigma(\gamma)\left( \eta/\sigma(\eta) \right)^{m/n}}.
\]

Using the idea from \cite{Vout3}, we can write
\[
\left( \gamma q_{r} + \sigma(\gamma)\sigma \left( q_{r} \right) \right) \alpha
- \left( q_{r} \beta + \sigma(\beta)\sigma \left( q_{r} \right) \right)
= \left( \sigma(\beta) - \alpha \sigma(\gamma) \right) \left( \eta/\sigma(\eta)-1 \right)^{2r+1} \sigma(\eta)^{r}R_{m,n,r} \left( \eta/\sigma(\eta) \right).
\]

This gives us a sequence of good approximations to $\alpha$ from our sequence
of good approximations to $\left( \eta/\sigma(\eta) \right)^{m/n}$.

As above with the usual hypergeometric method, to get improved constants
we need a lower bound for 
$\left| \gamma q_{r} + \sigma(\gamma)\sigma \left( q_{r} \right) \right|$.
Notice that if $\gamma q_{r}=a_{r}+b_{r}\sqrt{t}$, then
$\gamma q_{r} + \sigma(\gamma)\sigma \left( q_{r} \right)=2a_{r}$.
How can we bound $\left| 2a_{r} \right|$ from below?

Unfortunately, it is easy to compute examples with the real
parts of the values of the above hypergeometric functions having sign changes on the
unit circle that get closer to $1$ as $r$ gets larger. This seems to suggest that
our approach here will not provide better constants for Thue's Fundamentaltheorem.
But perhaps it is only some fresh ideas that are required.

\section*{Acknowledgements}

Some of the ideas in this paper resulted from discussions during the
``Transcendence and Diophantine Problems'' conference celebrating 100 years since
Feldman's birth, held at MIPT, Moscow, 10--14 June 2019. The author thanks Professor
Moshchevitin for the invitation, as well as for his generosity and assistance during
this conference, and for providing a stimulating environment.

\appendix
\section{Values of $\cC_{n}$, $\cD_{n}$ and supporting data}

In the following tables, we provide the values of $\cC_{1,n}$, $\log \cD_{1,n}$ and
$\log \cD_{2,n}$ for parts~(a) and (b) of Theorem~\ref{thm:dn-UB}. We also provide
information about the calculations used in the proof of this theorem, as described
in Section~\ref{sect:thm2-proof}.

Here is a description of the other fields in these tables.\\
$\bullet$ $m_{1,\text{max}}$: the value of $m$ where the maximum value of $\cC_{1,n}$ occurred.\\
$\bullet$ $\log \cD_{\text{Chud},n}$: the value of $\log \cD_{n}$ for Chudnovsky's asymptotic estimate.\\
$\bullet$ $\log n\mu_{n}$: the value of $\log \cD_{n}$ used by Baker and defined in Theorem~\ref{thm:dn-UB}.\\
These two values are provided for comparison with our own values.\\
$\bullet$ $r_{1,\text{max}}$: the value of $r$ where the maximum value of $\cC_{1,n}$ occurred.\\
$\bullet$ $m_{2,\text{max}}$: the value of $m$ where $\cC_{n}=100$ with $\cD_{n}=\cD_{2,n}$ occurred.\\
$\bullet$ $r_{2,\text{max}}$: the value of $r$ where $\cC_{n}=100$ with $\cD_{n}=\cD_{2,n}$ occurred.\\
$\bullet$ $r_{\text{comp}}$: defined at the start of Section~\ref{sect:thm2-proof}.

Note that $m_{1,\text{max}}$ and $m_{2,\text{max}}$ are not included in Tables~\ref{table:4}--\ref{table:7},
since we only consider $m=1$ for such values of $n$.

In some cases, especially for large $n$, $\cC_{n}=100$ is never attained with the
values of $\cD_{n}$ that we can use. In these cases, the entries of $m_{2,\text{max}}$
and $r_{2,\text{max}}$ in the tables are ``--''.

\begin{table}[h!]
\centering
\scalebox{0.9}{%
\begin{tabular}{|c|c|c|c|c|c|c|c|c|c|c|c|}  \hline
 $n$ & $\cC_{1,n}$ & $\log \cD_{\text{Chud},n}$ & $\log \cD_{1,n}$ & $\log \cD_{2,n}$ & $\log n\mu_{n}$ & $m_{1,\text{max}}$ & $m_{2,\text{max}}$ & $r_{1,\text{max}}$ & $r_{2,\text{max}}$ & $r_{\text{comp}}$ & $N$ \\ \hline \hline
3 & $2 \cdot 10^{14}$ & 0.907 & 0.916 & 0.953 & 1.648 & 1 & 1 & 19,946 & 66 & $200 \cdot 10^{6}$ & 201 \\ \hline
4 & $3 \cdot 10^{26}$ & 1.571 & 1.579 & 1.635 & 2.080 & 1 & 1 & 14,983 & 165 & $50 \cdot 10^{6}$ & 99 \\ \hline
5 & $10^{45}$ & 1.337 & 1.348 & 1.410 & 2.012 & 1 & 2 & 7060 & 200 & $45 \cdot 10^{6}$ & 77 \\ \hline
6 & $7 \cdot 10^{24}$ & 2.721 & 2.729 & 2.761 & 3.035 & 1 & 1 & 9912 & 271 & $36 \cdot 10^{6}$ & 65 \\ \hline
7 & $10^{26}$ & 1.625 & 1.638 & 1.716 & 2.271 & 1 & 2 & 12364 & 293 & $47 \cdot 10^{6}$ & 65 \\ \hline
8 & $8 \cdot 10^{20}$ & 2.222 & 2.235 & 2.348 & 2.773 & 1 & 1 & 3529 & 61 & $41 \cdot 10^{6}$ & 57 \\ \hline
9 & $5 \cdot 10^{31}$ & 2.155 & 2.169 & 2.288 & 2.747 & 1 & 2 & 13,953 & 52 & $40 \cdot 10^{6}$ & 58 \\ \hline
10 & $2 \cdot 10^{26}$ & 2.988 & 2.999 & 3.064 & 3.399 & 1 & 1 & 2383 & 107 & $41 \cdot 10^{6}$ & 52 \\ \hline
11 & $7 \cdot 10^{23}$ & 2.020 & 2.038 & 2.158 & 2.638 & 5 & 5 & 2161 & 114 & $44 \cdot 10^{6}$ & 45 \\ \hline
12 & $3 \cdot 10^{32}$ & 3.142 & 3.155 & 3.258 & 3.728 & 5 & 1 & 3568 & 42 & $48 \cdot 10^{6}$ & 56 \\ \hline
13 & $4 \cdot 10^{24}$ & 2.169 & 2.189 & 2.314 & 2.779 & 4 & 4 & 3234 & 30 & $46 \cdot 10^{6}$ & 38 \\ \hline
14 & $2 \cdot 10^{30}$ & 3.203 & 3.216 & 3.350 & 3.657 & 3 & 3 & 2794 & 47 & $46 \cdot 10^{6}$ & 55 \\ \hline
15 & $7 \cdot 10^{30}$ & 3.125 & 3.141 & 3.283 & 3.660 & 4 & 7 & 12515 & 61 & $46 \cdot 10^{6}$ & 45 \\ \hline
16 & $3 \cdot 10^{51}$ & 2.903 & 2.920 & 3.061 & 3.466 & 3 & 3 & 7759 & 55 & $49 \cdot 10^{6}$ & 48 \\ \hline
17 & $4 \cdot 10^{22}$ & 2.410 & 2.435 & 2.576 & 3.011 & 4 & 8 & 2424 & 23 & $50 \cdot 10^{6}$ & 35 \\ \hline
18 & $3 \cdot 10^{26}$ & 3.600 & 3.613 & 3.713 & 4.133 & 1 & 5 & 1553 & 113 & $49 \cdot 10^{6}$ & 59 \\ \hline
19 & $4 \cdot 10^{20}$ & 2.511 & 2.538 & 2.741 & 3.109 & 1 & 6 & 2806 & 73 & $48 \cdot 10^{6}$ & 28 \\ \hline
20 & $5 \cdot 10^{23}$ & 3.513 & 3.530 & 3.666 & 4.092 & 3 & 1 & 4061 & 36 & $45 \cdot 10^{6}$ & 43 \\ \hline
21 & $4 \cdot 10^{32}$ & 3.375 & 3.395 & 3.527 & 3.919 & 4 & 8 & 1507 & 183 & $45 \cdot 10^{6}$ & 35 \\ \hline
22 & $5 \cdot 10^{27}$ & 3.530 & 3.548 & 3.666 & 4.024 & 7 & 7 & 3283 & 107 & $43 \cdot 10^{6}$ & 42 \\ \hline
23 & $7 \cdot 10^{17}$ & 2.687 & 2.715 & 2.908 & 3.279 & 8 & 10 & 1579 & 73 & $49 \cdot 10^{6}$ & 27 \\ \hline
24 & $7 \cdot 10^{37}$ & 3.848 & 3.868 & 3.997 & 4.421 & 11 & 11 & 5920 & 102 & $48 \cdot 10^{6}$ & 37 \\ \hline
25 & $2 \cdot 10^{28}$ & 3.049 & 3.077 & 3.280 & 3.622 & 8 & 8 & 3252 & 52 & $45 \cdot 10^{6}$ & 28 \\ \hline
26 & $8 \cdot 10^{26}$ & 3.660 & 3.680 & 3.792 & 4.165 & 7 & 9 & 1984 & 165 & $49 \cdot 10^{6}$ & 34 \\ \hline
27 & $4 \cdot 10^{19}$ & 3.275 & 3.303 & 3.453 & 3.846 & 8 & 4 & 1251 & 27 & $45 \cdot 10^{6}$ & 28 \\ \hline
28 & $5 \cdot 10^{25}$ & 3.774 & 3.796 & 3.993 & 4.350 & 3 & 13 & 2018 & 38 & $47 \cdot 10^{6}$ & 34 \\ \hline
29 & $8 \cdot 10^{20}$ & 2.901 & 2.936 & 3.185 & 3.488 & 3 & 3 & 601 & 29 & $47 \cdot 10^{6}$ & 22 \\ \hline
30 & $5 \cdot 10^{39}$ & 4.431 & 4.449 & 4.592 & 5.047 & 7 & 7 & 2093 & 102 & $46 \cdot 10^{6}$ & 48 \\ \hline
31 & $4 \cdot 10^{24}$ & 2.963 & 3.000 & 3.216 & 3.549 & 14 & 12 & 1496 & 31 & $50 \cdot 10^{6}$ & 22 \\ \hline
32 & $2 \cdot 10^{27}$ & 3.593 & 3.619 & 3.821 & 4.159 & 7 & 15 & 1231 & 44 & $50 \cdot 10^{6}$ & 29 \\ \hline
33 & $2 \cdot 10^{29}$ & 3.734 & 3.761 & 3.900 & 4.286 & 4 & 8 & 1550 & 23 & $49 \cdot 10^{6}$ & 29 \\ \hline
34 & $4 \cdot 10^{35}$ & 3.877 & 3.903 & 4.013 & 4.397 & 11 & 3 & 2642 & 59 & $47 \cdot 10^{6}$ & 31 \\ \hline
35 & $3 \cdot 10^{27}$ & 3.730 & 3.760 & 3.960 & 4.283 & 1 & 9 & 2470 & 58 & $48 \cdot 10^{6}$ & 26 \\ \hline
36 & $6 \cdot 10^{56}$ & 4.256 & 4.278 & 4.427 & 4.826 & 7 & 17 & 5305 & 16 & $50 \cdot 10^{6}$ & 38 \\ \hline
37 & $5 \cdot 10^{20}$ & 3.129 & 3.169 & 3.352 & 3.712 & 11 & 18 & 1009 & 17 & $50 \cdot 10^{6}$ & 19 \\ \hline
38 & $4 \cdot 10^{16}$ & 3.970 & 3.997 & 4.152 & 4.495 & 15 & 3 & 909 & 67 & $48 \cdot 10^{6}$ & 28 \\ \hline
39 & $5 \cdot 10^{38}$ & 3.873 & 3.904 & 4.064 & 4.427 & 19 & 19 & 6609 & 94 & $47 \cdot 10^{6}$ & 28 \\ \hline
40 & $9 \cdot 10^{39}$ & 4.214 & 4.242 & 4.364 & 4.785 & 19 & 3 & 1809 & 32 & $49 \cdot 10^{6}$ & 28 \\ \hline
41 & $7 \cdot 10^{21}$ & 3.226 & 3.270 & 3.448 & 3.807 & 15 & 5 & 907 & 43 & $47 \cdot 10^{6}$ & 19 \\ \hline
42 & $9 \cdot 10^{35}$ & 4.703 & 4.724 & 4.912 & 5.305 & 19 & 13 & 5452 & 25 & $50 \cdot 10^{6}$ & 40 \\ \hline
43 & $2 \cdot 10^{19}$ & 3.271 & 3.316 & 3.535 & 3.851 & 4 & 10 & 1596 & 45 & $46 \cdot 10^{6}$ & 18 \\ \hline
44 & $3 \cdot 10^{28}$ & 4.145 & 4.175 & 4.316 & 4.718 & 7 & 15 & 4890 & 55 & $46 \cdot 10^{6}$ & 26 \\ \hline
\end{tabular}%
}
\caption{Data for $3 \leq n \leq 44$}
\label{table:1}
\end{table}

\begin{table}[h!]
\centering
\scalebox{0.9}{%
\begin{tabular}{|c|c|c|c|c|c|c|c|c|c|c|c|}  \hline
$n$ & $\cC_{1,n}$ & $\log \cD_{\text{Chud},n}$ & $\log \cD_{1,n}$ & $\log \cD_{2,n}$ & $\log n\mu_{n}$ & $m_{1,\text{max}}$ & $m_{2,\text{max}}$ & $r_{1,\text{max}}$ & $r_{2,\text{max}}$ & $r_{\text{comp}}$ & $N$ \\ \hline \hline
45 & $5 \cdot 10^{20}$ & 4.196 & 4.228 & 4.388 & 4.759 & 22 & 11 & 1480 & 66 & $49 \cdot 10^{6}$ & 24 \\ \hline
46 & $4 \cdot 10^{20}$ & 4.133 & 4.162 & 4.314 & 4.665 & 5 & 19 & 1615 & 68 & $49 \cdot 10^{6}$ & 26 \\ \hline
47 & $4 \cdot 10^{21}$ & 3.355 & 3.402 & 3.589 & 3.934 & 5 & 19 & 1631 & 18 & $47 \cdot 10^{6}$ & 17 \\ \hline
48 & $10^{22}$ & 4.545 & 4.573 & 4.751 & 5.114 & 7 & 7 & 3982 & 32 & $47 \cdot 10^{6}$ & 30 \\ \hline
49 & $2 \cdot 10^{19}$ & 3.647 & 3.691 & 3.849 & 4.217 & 24 & 8 & 688 & 39 & $49 \cdot 10^{6}$ & 20 \\ \hline
50 & $9 \cdot 10^{26}$ & 4.448 & 4.477 & 4.604 & 5.008 & 7 & 7 & 3503 & 102 & $50 \cdot 10^{6}$ & 29 \\ \hline
51 & $2 \cdot 10^{20}$ & 4.101 & 4.139 & 4.345 & 4.659 & 25 & 22 & 1135 & 27 & $50 \cdot 10^{6}$ & 22 \\ \hline
52 & $2 \cdot 10^{27}$ & 4.287 & 4.320 & 4.460 & 4.859 & 3 & 9 & 1712 & 192 & $45 \cdot 10^{6}$ & 26 \\ \hline
53 & $4 \cdot 10^{17}$ & 3.469 & 3.524 & 3.708 & 4.047 & 17 & 6 & 762 & 45 & $42 \cdot 10^{6}$ & 14 \\ \hline
54 & $5 \cdot 10^{20}$ & 4.668 & 4.697 & 4.885 & 5.232 & 11 & 7 & 2062 & 89 & $48 \cdot 10^{6}$ & 28 \\ \hline
55 & $8 \cdot 10^{31}$ & 4.092 & 4.135 & 4.296 & 4.650 & 4 & 14 & 567 & 27 & $47 \cdot 10^{6}$ & 18 \\ \hline
56 & $2 \cdot 10^{27}$ & 4.473 & 4.508 & 4.706 & 5.043 & 1 & 19 & 587 & 105 & $48 \cdot 10^{6}$ & 23 \\ \hline
57 & $6 \cdot 10^{26}$ & 4.198 & 4.240 & 4.459 & 4.756 & 23 & 28 & 1437 & 27 & $48 \cdot 10^{6}$ & 17 \\ \hline
58 & $5 \cdot 10^{27}$ & 4.335 & 4.371 & 4.568 & 4.874 & 17 & 17 & 722 & 33 & $48 \cdot 10^{6}$ & 21 \\ \hline
59 & $2 \cdot 10^{18}$ & 3.571 & 3.630 & 3.957 & 4.148 & 24 & 28 & 655 & 27 & $38 \cdot 10^{6}$ & 13 \\ \hline
60 & $3 \cdot 10^{26}$ & 5.176 & 5.203 & 5.388 & 5.740 & 19 & 11 & 2171 & 96 & $48 \cdot 10^{6}$ & 27 \\ \hline
61 & $5 \cdot 10^{19}$ & 3.603 & 3.664 & 3.876 & 4.180 & 17 & 8 & 1096 & 21 & $36 \cdot 10^{6}$ & 13 \\ \hline
62 & $10^{22}$ & 4.394 & 4.433 & 4.660 & 4.935 & 23 & 23 & 2398 & 31 & $47 \cdot 10^{6}$ & 20 \\ \hline
63 & $3 \cdot 10^{21}$ & 4.453 & 4.494 & 4.723 & 5.017 & 10 & 29 & 589 & 27 & $50 \cdot 10^{6}$ & 21 \\ \hline
64 & $3 \cdot 10^{31}$ & 4.285 & 4.326 & 4.476 & 4.853 & 31 & 9 & 1711 & 47 & $48 \cdot 10^{6}$ & 20 \\ \hline
65 & $3 \cdot 10^{22}$ & 4.232 & 4.281 & 4.505 & 4.791 & 14 & 28 & 677 & 27 & $44 \cdot 10^{6}$ & 17 \\ \hline
66 & $9 \cdot 10^{25}$ & 5.082 & 5.112 & 5.267 & 5.672 & 19 & 29 & 1383 & 35 & $48 \cdot 10^{6}$ & 30 \\ \hline
67 & $4 \cdot 10^{16}$ & 3.693 & 3.759 & 3.923 & 4.269 & 17 & 27 & 635 & 134 & $32 \cdot 10^{6}$ & 13 \\ \hline
68 & $2 \cdot 10^{29}$ & 4.519 & 4.560 & 4.752 & 5.090 & 21 & 31 & 1564 & 67 & $50 \cdot 10^{6}$ & 20 \\ \hline
69 & $4 \cdot 10^{15}$ & 4.366 & 4.412 & 4.623 & 4.926 & 1 & 7 & 707 & 26 & $49 \cdot 10^{6}$ & 19 \\ \hline
70 & $3 \cdot 10^{26}$ & 5.080 & 5.112 & 5.287 & 5.669 & 23 & 17 & 1120 & 31 & $50 \cdot 10^{6}$ & 27 \\ \hline
71 & $2 \cdot 10^{17}$ & 3.749 & 3.819 & 4.073 & 4.324 & 10 & 14 & 1096 & 13 & $30 \cdot 10^{6}$ & 11 \\ \hline
72 & $4 \cdot 10^{27}$ & 4.951 & 4.987 & 5.160 & 5.520 & 31 & 35 & 4221 & 124 & $48 \cdot 10^{6}$ & 21 \\ \hline
73 & $5 \cdot 10^{13}$ & 3.775 & 3.848 & 4.053 & 4.351 & 11 & 4 & 442 & 31 & $30 \cdot 10^{6}$ & 11 \\ \hline
74 & $3 \cdot 10^{20}$ & 4.553 & 4.595 & 4.807 & 5.098 & 27 & 1 & 1549 & 45 & $48 \cdot 10^{6}$ & 18 \\ \hline
75 & $6 \cdot 10^{24}$ & 4.704 & 4.748 & 4.967 & 5.270 & 2 & 8 & 1913 & 58 & $50 \cdot 10^{6}$ & 21 \\ \hline
76 & $9 \cdot 10^{32}$ & 4.617 & 4.659 & 4.874 & 5.188 & 23 & 31 & 446 & 57 & $49 \cdot 10^{6}$ & 18 \\ \hline
77 & $2 \cdot 10^{19}$ & 4.348 & 4.409 & 4.600 & 4.908 & 3 & 15 & 576 & 101 & $36 \cdot 10^{6}$ & 13 \\ \hline
78 & $9 \cdot 10^{23}$ & 5.227 & 5.261 & 5.493 & 5.813 & 19 & 31 & 1841 & 92 & $45 \cdot 10^{6}$ & 28 \\ \hline
79 & $3 \cdot 10^{11}$ & 3.851 & 3.928 & 4.150 & 4.426 & 25 & 12 & 101 & 11 & $28 \cdot 10^{6}$ & 10 \\ \hline
80 & $9 \cdot 10^{28}$ & 4.910 & 4.950 & 5.126 & 5.478 & 3 & 33 & 1571 & 107 & $48 \cdot 10^{6}$ & 20 \\ \hline
81 & $3 \cdot 10^{18}$ & 4.376 & 4.435 & 4.658 & 4.944 & 40 & 23 & 484 & 50 & $40 \cdot 10^{6}$ & 14 \\ \hline
82 & $9 \cdot 10^{18}$ & 4.646 & 4.692 & 4.900 & 5.193 & 35 & 35 & 822 & 37 & $49 \cdot 10^{6}$ & 19 \\ \hline
83 & $3 \cdot 10^{21}$ & 3.899 & 3.980 & 4.172 & 4.473 & 18 & 8 & 765 & 23 & $27 \cdot 10^{6}$ & 10 \\ \hline
84 & $3 \cdot 10^{21}$ & 5.433 & 5.468 & 5.643 & 5.998 & 37 & 23 & 1017 & 94 & $49 \cdot 10^{6}$ & 24 \\ \hline
85 & $4 \cdot 10^{15}$ & 4.461 & 4.527 & 4.713 & 5.023 & 19 & 9 & 568 & 17 & $33 \cdot 10^{6}$ & 12 \\ \hline
86 & $3 \cdot 10^{17}$ & 4.689 & 4.736 & 4.919 & 5.238 & 17 & 11 & 593 & 53 & $50 \cdot 10^{6}$ & 18 \\ \hline
\end{tabular}%
}
\caption{Data for $45 \leq n \leq 86$}
\label{table:2}
\end{table}

\begin{table}[h!]
\centering
\scalebox{0.9}{%
\begin{tabular}{|c|c|c|c|c|c|c|c|c|c|c|c|}  \hline
$n$ & $\cC_{1,n}$ & $\log \cD_{\text{Chud},n}$ & $\log \cD_{1,n}$ & $\log \cD_{2,n}$ & $\log n\mu_{n}$ & $m_{1,\text{max}}$ & $m_{2,\text{max}}$ & $r_{1,\text{max}}$ & $r_{2,\text{max}}$ & $r_{\text{comp}}$ & $N$ \\ \hline \hline
87 & $3 \cdot 10^{18}$ & 4.574 & 4.632 & 4.830 & 5.136 & 35 & 35 & 1398 & 33 & $49 \cdot 10^{6}$ & 14 \\ \hline
88 & $10^{23}$ & 4.842 & 4.888 & 5.115 & 5.411 & 9 & 23 & 1097 & 31 & $49 \cdot 10^{6}$ & 18 \\ \hline
89 & $2 \cdot 10^{12}$ & 3.966 & 4.051 & 4.264 & 4.540 & 37 & 35 & 180 & 25 & $25 \cdot 10^{6}$ & 9 \\ \hline
90 & $5 \cdot 10^{25}$ & 5.571 & 5.605 & 5.775 & 6.145 & 23 & 29 & 590 & 54 & $49 \cdot 10^{6}$ & 24 \\ \hline
91 & $9 \cdot 10^{15}$ & 4.488 & 4.561 & 4.762 & 5.049 & 5 & 29 & 385 & 57 & $29 \cdot 10^{6}$ & 10 \\ \hline
92 & $3 \cdot 10^{21}$ & 4.788 & 4.836 & 5.042 & 5.358 & 35 & 31 & 499 & 45 & $49 \cdot 10^{6}$ & 16 \\ \hline
93 & $10^{17}$ & 4.635 & 4.698 & 4.924 & 5.197 & 37 & 35 & 833 & 27 & $36 \cdot 10^{6}$ & 13 \\ \hline
94 & $2 \cdot 10^{17}$ & 4.770 & 4.821 & 4.961 & 5.321 & 23 & 17 & 2659 & 33 & $46 \cdot 10^{6}$ & 17 \\ \hline
95 & $3 \cdot 10^{11}$ & 4.558 & 4.628 & 4.809 & 5.120 & 41 & 46 & 591 & 42 & $30 \cdot 10^{6}$ & 11 \\ \hline
96 & $2 \cdot 10^{31}$ & 5.239 & 5.281 & 5.462 & 5.807 & 7 & 7 & 1661 & 46 & $48 \cdot 10^{6}$ & 19 \\ \hline
97 & $5 \cdot 10^{14}$ & 4.050 & 4.140 & 4.344 & 4.623 & 45 & 36 & 332 & 17 & $23 \cdot 10^{6}$ & 8 \\ \hline
98 & $9 \cdot 10^{16}$ & 5.038 & 5.085 & 5.339 & 5.603 & 9 & 37 & 375 & 50 & $50 \cdot 10^{6}$ & 19 \\ \hline
99 & $2 \cdot 10^{18}$ & 4.819 & 4.881 & 5.101 & 5.385 & 28 & 32 & 971 & 79 & $35 \cdot 10^{6}$ & 14 \\ \hline
100 & $2 \cdot 10^{23}$ & 5.133 & 5.178 & 5.405 & 5.701 & 23 & 41 & 1587 & 45 & $48 \cdot 10^{6}$ & 18 \\ \hline
\end{tabular}%
}
\caption{Data for $87 \leq n \leq 100$}
\label{table:3}
\end{table}

\begin{table}[h!]
\centering
\begin{tabular}{|c|c|c|c|c|c|c|c|c|c|}  \hline
$n$ & $\cC_{1,n}$ & $\log \cD_{\text{Chud},n}$ & $\log \cD_{1,n}$ & $\log \cD_{2,n}$ & $\log n\mu_{n}$ & $r_{1,\text{max}}$ & $r_{2,\text{max}}$ & $r_{\text{comp}}$ & $N$ \\ \hline \hline
101 & $3 \cdot 10^{8}$ & 4.089 & 4.188 & 4.247 & 4.662 & 253 & 253 & $22 \cdot 10^{6}$ & 8 \\ \hline
103 & $2 \cdot 10^{5}$ & 4.108 & 4.206 & 4.264 & 4.681 & 271 & 37 & $22 \cdot 10^{6}$ & 8 \\ \hline
107 & $3 \cdot 10^{3}$ & 4.145 & 4.249 & 4.323 & 4.717 & 42 & 42 & $21 \cdot 10^{6}$ & 8 \\ \hline
109 & $8 \cdot 10^{3}$ & 4.163 & 4.270 & 4.302 & 4.735 & 147 & 85 & $20 \cdot 10^{6}$ & 7 \\ \hline
113 & $9 \cdot 10^{5}$ & 4.198 & 4.305 & 4.395 & 4.770 & 117 & 33 & $20 \cdot 10^{6}$ & 8 \\ \hline
127 & $4 \cdot 10^{3}$ & 4.311 & 4.428 & 4.507 & 4.883 & 47 & 47 & $18 \cdot 10^{6}$ & 6 \\ \hline
131 & $2 \cdot 10^{9}$ & 4.341 & 4.464 & 4.550 & 4.913 & 193 & 193 & $17 \cdot 10^{6}$ & 6 \\ \hline
137 & $5 \cdot 10^{5}$ & 4.385 & 4.508 & 4.645 & 4.957 & 62 & 62 & $16 \cdot 10^{6}$ & 6 \\ \hline
139 & $7 \cdot 10^{3}$ & 4.399 & 4.527 & 4.551 & 4.971 & 177 & 177 & $16 \cdot 10^{6}$ & 6 \\ \hline
149 & $224$ & 4.467 & 4.607 & 4.634 & 5.038 & 181 & 19 & $15 \cdot 10^{6}$ & 5 \\ \hline
151 & $122$ & 4.480 & 4.621 & 4.624 & 5.051 & 71 & 71 & $15 \cdot 10^{6}$ & 5 \\ \hline
157 & $821$ & 4.518 & 4.657 & 4.687 & 5.089 & 71 & 71 & $14 \cdot 10^{6}$ & 5 \\ \hline
163 & $10$ & 4.555 & 4.701 & 4.701 & 5.126 & 1 &  -- & $14 \cdot 10^{6}$ & 6 \\ \hline
167 & $3 \cdot 10^{4}$ & 4.578 & 4.733 & 4.766 & 5.149 & 163 & 163 & $13 \cdot 10^{6}$ & 5 \\ \hline
173 & $94$ & 4.613 & 4.768 & 4.768 & 5.184 & 253 &  -- & $13 \cdot 10^{6}$ & 5 \\ \hline
179 & $15$ & 4.646 & 4.806 & 4.806 & 5.217 & 263 &  -- & $13 \cdot 10^{6}$ & 5 \\ \hline
181 & $8 \cdot 10^{3}$ & 4.657 & 4.821 & 4.856 & 5.228 & 145 & 23 & $12 \cdot 10^{6}$ & 5 \\ \hline
191 & $705$ & 4.710 & 4.881 & 4.949 & 5.280 & 29 & 29 & $12 \cdot 10^{6}$ & 4 \\ \hline
193 & $22$ & 4.720 & 4.895 & 4.895 & 5.291 & 17 &  -- & $12 \cdot 10^{6}$ & 5 \\ \hline
197 & $490$ & 4.740 & 4.913 & 4.940 & 5.311 & 61 & 61 & $11 \cdot 10^{6}$ & 4 \\ \hline
199 & $59$ & 4.750 & 4.930 & 4.930 & 5.321 & 18 &  -- & $11 \cdot 10^{6}$ & 5 \\ \hline
211 & $18$ & 4.808 & 4.992 & 4.992 & 5.378 & 25 &  -- & $11 \cdot 10^{6}$ & 4 \\ \hline
223 & $205$ & 4.862 & 5.057 & 5.069 & 5.432 & 61 & 61 & $10 \cdot 10^{6}$ & 4 \\ \hline
227 & $11$ & 4.879 & 5.076 & 5.076 & 5.449 & 1 &  -- & $10 \cdot 10^{6}$ & 4 \\ \hline
229 & $28$ & 4.888 & 5.088 & 5.088 & 5.458 & 17 &  -- & $9.8 \cdot 10^{6}$ & 4 \\ \hline
233 & $14$ & 4.905 & 5.108 & 5.108 & 5.475 & 87 &  -- & $10 \cdot 10^{6}$ & 4 \\ \hline
239 & $53$ & 4.930 & 5.134 & 5.134 & 5.500 & 33 &  -- & $9.9 \cdot 10^{6}$ & 4 \\ \hline
241 & $2 \cdot 10^{3}$ & 4.938 & 5.142 & 5.222 & 5.508 & 31 & 31 & $9.9 \cdot 10^{6}$ & 3 \\ \hline
251 & $12$ & 4.978 & 5.188 & 5.188 & 5.548 & 1 &  -- & $9.9 \cdot 10^{6}$ & 4 \\ \hline
257 & $254$ & 5.002 & 5.217 & 5.230 & 5.571 & 73 & 73 & $9.9 \cdot 10^{6}$ & 4 \\ \hline
263 & $12$ & 5.024 & 5.235 & 5.235 & 5.594 & 1 &  -- & $10 \cdot 10^{6}$ & 4 \\ \hline
269 & $21$ & 5.046 & 5.264 & 5.264 & 5.616 & 7 &  -- & $9.9 \cdot 10^{6}$ & 4 \\ \hline
271 & $85$ & 5.054 & 5.273 & 5.273 & 5.623 & 13 &  -- & $10 \cdot 10^{6}$ & 4 \\ \hline
277 & $2 \cdot 10^{3}$ & 5.075 & 5.299 & 5.336 & 5.645 & 73 & 73 & $9.9 \cdot 10^{6}$ & 4 \\ \hline
281 & $42$ & 5.089 & 5.313 & 5.313 & 5.659 & 13 &  -- & $9.9 \cdot 10^{6}$ & 4 \\ \hline
283 & $225$ & 5.096 & 5.322 & 5.340 & 5.666 & 45 & 45 & $10 \cdot 10^{6}$ & 3 \\ \hline
293 & $12$ & 5.131 & 5.361 & 5.361 & 5.700 & 1 &  -- & $10 \cdot 10^{6}$ & 3 \\ \hline
307 & $20$ & 5.177 & 5.414 & 5.414 & 5.746 & 7 &  -- & $9.9 \cdot 10^{6}$ & 3 \\ \hline
311 & $12$ & 5.189 & 5.432 & 5.432 & 5.759 & 1 &  -- & $9.9 \cdot 10^{6}$ & 3 \\ \hline
313 & $2 \cdot 10^{5}$ & 5.196 & 5.434 & 5.489 & 5.765 & 129 & 129 & $10 \cdot 10^{6}$ & 4 \\ \hline
317 & $13$ & 5.208 & 5.454 & 5.454 & 5.778 & 1 &  -- & $10 \cdot 10^{6}$ & 3 \\ \hline
331 & $27$ & 5.251 & 5.504 & 5.504 & 5.820 & 9 &  -- & $9.9 \cdot 10^{6}$ & 3 \\ \hline
\end{tabular}
\caption{Data for $101 \leq n \leq 331$, prime}
\label{table:4}
\end{table}

\begin{table}[h!]
\centering
\begin{tabular}{|c|c|c|c|c|c|c|c|c|c|}  \hline
$n$ & $\cC_{1,n}$ & $\log \cD_{\text{Chud},n}$ & $\log \cD_{1,n}$ & $\log \cD_{2,n}$ & $\log n\mu_{n}$ & $r_{1,\text{max}}$ & $r_{2,\text{max}}$ & $r_{\text{comp}}$ & $N$ \\ \hline \hline
337 & $14$ & 5.269 & 5.522 & 5.522 & 5.838 & 5 &  -- & $10 \cdot 10^{6}$ & 3 \\ \hline
347 & $63$ & 5.298 & 5.555 & 5.555 & 5.867 & 25 &  -- & $9.9 \cdot 10^{6}$ & 3 \\ \hline
349 & $58$ & 5.303 & 5.564 & 5.564 & 5.872 & 25 &  -- & $10 \cdot 10^{6}$ & 3 \\ \hline
353 & $13$ & 5.315 & 5.581 & 5.581 & 5.884 & 1 &  -- & $9.9 \cdot 10^{6}$ & 3 \\ \hline
359 & $18$ & 5.331 & 5.599 & 5.599 & 5.900 & 7 &  -- & $10 \cdot 10^{6}$ & 3 \\ \hline
367 & $13$ & 5.353 & 5.617 & 5.617 & 5.922 & 1 &  -- & $9.9 \cdot 10^{6}$ & 3 \\ \hline
373 & $216$ & 5.369 & 5.640 & 5.654 & 5.938 & 59 & 59 & $9.9 \cdot 10^{6}$ & 3 \\ \hline
379 & $13$ & 5.385 & 5.654 & 5.654 & 5.954 & 1 &  -- & $9.9 \cdot 10^{6}$ & 3 \\ \hline
383 & $13$ & 5.395 & 5.669 & 5.669 & 5.964 & 1 &  -- & $10 \cdot 10^{6}$ & 3 \\ \hline
389 & $13$ & 5.410 & 5.691 & 5.691 & 5.979 & 1 &  -- & $10 \cdot 10^{6}$ & 3 \\ \hline
397 & $23$ & 5.431 & 5.716 & 5.716 & 6.000 & 9 &  -- & $9.8 \cdot 10^{6}$ & 3 \\ \hline
401 & $163$ & 5.441 & 5.723 & 5.731 & 6.009 & 65 & 65 & $10 \cdot 10^{6}$ & 3 \\ \hline
409 & $26$ & 5.460 & 5.746 & 5.746 & 6.029 & 35 &  -- & $10 \cdot 10^{6}$ & 3 \\ \hline
419 & $21$ & 5.484 & 5.775 & 5.775 & 6.053 & 28 &  -- & $10 \cdot 10^{6}$ & 3 \\ \hline
421 & $14$ & 5.489 & 5.775 & 5.775 & 6.058 & 1 &  -- & $10 \cdot 10^{6}$ & 3 \\ \hline
431 & $42$ & 5.512 & 5.810 & 5.810 & 6.081 & 13 &  -- & $9.9 \cdot 10^{6}$ & 3 \\ \hline
433 & $17$ & 5.516 & 5.811 & 5.811 & 6.085 & 7 &  -- & $10 \cdot 10^{6}$ & 3 \\ \hline
439 & $14$ & 5.530 & 5.829 & 5.829 & 6.099 & 1 &  -- & $9.9 \cdot 10^{6}$ & 3 \\ \hline
443 & $23$ & 5.539 & 5.839 & 5.839 & 6.108 & 31 &  -- & $9.9 \cdot 10^{6}$ & 3 \\ \hline
449 & $14$ & 5.552 & 5.854 & 5.854 & 6.121 & 1 &  -- & $9.9 \cdot 10^{6}$ & 3 \\ \hline
457 & $14$ & 5.570 & 5.882 & 5.882 & 6.139 & 1 &  -- & $9.9 \cdot 10^{6}$ & 3 \\ \hline
461 & $14$ & 5.578 & 5.889 & 5.889 & 6.147 & 1 &  -- & $9.9 \cdot 10^{6}$ & 3 \\ \hline
463 & $14$ & 5.583 & 5.897 & 5.897 & 6.152 & 1 &  -- & $9.9 \cdot 10^{6}$ & 3 \\ \hline
467 & $15$ & 5.591 & 5.904 & 5.904 & 6.160 & 7 &  -- & $9.9 \cdot 10^{6}$ & 3 \\ \hline
479 & $14$ & 5.616 & 5.934 & 5.934 & 6.185 & 1 &  -- & $10 \cdot 10^{6}$ & 3 \\ \hline
487 & $14$ & 5.633 & 5.956 & 5.956 & 6.201 & 1 &  -- & $10 \cdot 10^{6}$ & 2 \\ \hline
491 & $14$ & 5.641 & 5.971 & 5.971 & 6.210 & 1 &  -- & $10 \cdot 10^{6}$ & 3 \\ \hline
499 & $167$ & 5.657 & 5.986 & 6.002 & 6.226 & 33 & 33 & $9.9 \cdot 10^{6}$ & 2 \\ \hline
503 & $15$ & 5.665 & 5.992 & 5.992 & 6.233 & 1 &  -- & $10 \cdot 10^{6}$ & 2 \\ \hline
509 & $15$ & 5.677 & 6.014 & 6.014 & 6.245 & 1 &  -- & $9.9 \cdot 10^{6}$ & 3 \\ \hline
521 & $15$ & 5.700 & 6.051 & 6.051 & 6.268 & 1 &  -- & $10 \cdot 10^{6}$ & 2 \\ \hline
523 & $15$ & 5.703 & 6.046 & 6.046 & 6.272 & 1 &  -- & $10 \cdot 10^{6}$ & 2 \\ \hline
541 & $15$ & 5.737 & 6.082 & 6.082 & 6.306 & 1 &  -- & $10 \cdot 10^{6}$ & 2 \\ \hline
547 & $15$ & 5.748 & 6.095 & 6.095 & 6.316 & 1 &  -- & $10 \cdot 10^{6}$ & 3 \\ \hline
557 & $15$ & 5.766 & 6.117 & 6.117 & 6.334 & 1 &  -- & $10 \cdot 10^{6}$ & 2 \\ \hline
563 & $15$ & 5.777 & 6.132 & 6.132 & 6.345 & 1 &  -- & $10 \cdot 10^{6}$ & 2 \\ \hline
569 & $15$ & 5.787 & 6.144 & 6.144 & 6.356 & 1 &  -- & $9.9 \cdot 10^{6}$ & 2 \\ \hline
571 & $18$ & 5.791 & 6.150 & 6.150 & 6.359 & 11 &  -- & $10 \cdot 10^{6}$ & 2 \\ \hline
577 & $15$ & 5.801 & 6.168 & 6.168 & 6.369 & 1 &  -- & $9.9 \cdot 10^{6}$ & 2 \\ \hline
587 & $15$ & 5.818 & 6.185 & 6.185 & 6.386 & 1 &  -- & $9.9 \cdot 10^{6}$ & 2 \\ \hline
593 & $15$ & 5.828 & 6.204 & 6.204 & 6.396 & 1 &  -- & $10 \cdot 10^{6}$ & 3 \\ \hline
599 & $15$ & 5.838 & 6.210 & 6.210 & 6.406 & 1 &  -- & $10 \cdot 10^{6}$ & 2 \\ \hline
\end{tabular}
\caption{Data for $337 \leq n \leq 599$, prime}
\label{table:5}
\end{table}

\begin{table}[h!]
\centering
\begin{tabular}{|c|c|c|c|c|c|c|c|c|c|}  \hline
$n$ & $\cC_{1,n}$ & $\log \cD_{\text{Chud},n}$ & $\log \cD_{1,n}$ & $\log \cD_{2,n}$ & $\log n\mu_{n}$ & $r_{1,\text{max}}$ & $r_{2,\text{max}}$ & $r_{\text{comp}}$ & $N$ \\ \hline \hline
601 & $15$ & 5.841 & 6.226 & 6.226 & 6.410 & 1 &  -- & $9.9 \cdot 10^{6}$ & 2 \\ \hline
607 & $15$ & 5.851 & 6.234 & 6.234 & 6.420 & 1 &  -- & $10 \cdot 10^{6}$ & 2 \\ \hline
613 & $15$ & 5.861 & 6.242 & 6.242 & 6.429 & 1 &  -- & $10 \cdot 10^{6}$ & 3 \\ \hline
617 & $15$ & 5.867 & 6.244 & 6.244 & 6.436 & 1 &  -- & $9.9 \cdot 10^{6}$ & 2 \\ \hline
619 & $15$ & 5.871 & 6.244 & 6.244 & 6.439 & 1 &  -- & $9.9 \cdot 10^{6}$ & 2 \\ \hline
631 & $15$ & 5.890 & 6.280 & 6.280 & 6.458 & 1 &  -- & $10 \cdot 10^{6}$ & 2 \\ \hline
641 & $15$ & 5.905 & 6.296 & 6.296 & 6.474 & 1 &  -- & $10 \cdot 10^{6}$ & 2 \\ \hline
643 & $15$ & 5.908 & 6.308 & 6.308 & 6.477 & 1 &  -- & $10 \cdot 10^{6}$ & 2 \\ \hline
647 & $16$ & 5.914 & 6.302 & 6.302 & 6.483 & 1 &  -- & $10 \cdot 10^{6}$ & 2 \\ \hline
653 & $16$ & 5.924 & 6.312 & 6.312 & 6.492 & 1 &  -- & $10 \cdot 10^{6}$ & 2 \\ \hline
659 & $16$ & 5.933 & 6.329 & 6.329 & 6.501 & 1 &  -- & $10 \cdot 10^{6}$ & 2 \\ \hline
661 & $16$ & 5.936 & 6.326 & 6.326 & 6.504 & 1 &  -- & $10 \cdot 10^{6}$ & 2 \\ \hline
673 & $15$ & 5.954 & 6.378 & 6.378 & 6.522 & 1 &  -- & $10 \cdot 10^{6}$ & 2 \\ \hline
677 & $16$ & 5.959 & 6.365 & 6.365 & 6.528 & 1 &  -- & $10 \cdot 10^{6}$ & 2 \\ \hline
683 & $16$ & 5.968 & 6.372 & 6.372 & 6.537 & 1 &  -- & $10 \cdot 10^{6}$ & 2 \\ \hline
691 & $16$ & 5.980 & 6.395 & 6.395 & 6.548 & 1 &  -- & $9.9 \cdot 10^{6}$ & 2 \\ \hline
701 & $16$ & 5.994 & 6.413 & 6.413 & 6.562 & 1 &  -- & $10 \cdot 10^{6}$ & 2 \\ \hline
709 & $16$ & 6.005 & 6.417 & 6.417 & 6.574 & 1 &  -- & $9.9 \cdot 10^{6}$ & 2 \\ \hline
719 & $16$ & 6.019 & 6.447 & 6.447 & 6.588 & 1 &  -- & $10 \cdot 10^{6}$ & 2 \\ \hline
727 & $16$ & 6.030 & 6.458 & 6.458 & 6.599 & 1 &  -- & $9.9 \cdot 10^{6}$ & 2 \\ \hline
733 & $16$ & 6.038 & 6.462 & 6.462 & 6.607 & 1 &  -- & $9.9 \cdot 10^{6}$ & 2 \\ \hline
739 & $16$ & 6.046 & 6.474 & 6.474 & 6.615 & 1 &  -- & $9.9 \cdot 10^{6}$ & 2 \\ \hline
743 & $16$ & 6.052 & 6.489 & 6.489 & 6.620 & 1 &  -- & $10 \cdot 10^{6}$ & 2 \\ \hline
751 & $16$ & 6.062 & 6.499 & 6.499 & 6.631 & 1 &  -- & $10 \cdot 10^{6}$ & 2 \\ \hline
757 & $16$ & 6.070 & 6.501 & 6.501 & 6.639 & 1 &  -- & $9.9 \cdot 10^{6}$ & 2 \\ \hline
761 & $16$ & 6.076 & 6.523 & 6.523 & 6.644 & 1 &  -- & $10 \cdot 10^{6}$ & 2 \\ \hline
769 & $16$ & 6.086 & 6.528 & 6.528 & 6.654 & 1 &  -- & $9.9 \cdot 10^{6}$ & 2 \\ \hline
773 & $16$ & 6.091 & 6.540 & 6.540 & 6.659 & 1 &  -- & $10 \cdot 10^{6}$ & 2 \\ \hline
787 & $16$ & 6.109 & 6.564 & 6.564 & 6.677 & 1 &  -- & $10 \cdot 10^{6}$ & 2 \\ \hline
797 & $16$ & 6.122 & 6.580 & 6.580 & 6.690 & 1 &  -- & $9.9 \cdot 10^{6}$ & 2 \\ \hline
809 & $16$ & 6.136 & 6.599 & 6.599 & 6.705 & 1 &  -- & $10 \cdot 10^{6}$ & 2 \\ \hline
811 & $16$ & 6.139 & 6.603 & 6.603 & 6.707 & 1 &  -- & $10 \cdot 10^{6}$ & 2 \\ \hline
821 & $16$ & 6.151 & 6.615 & 6.615 & 6.719 & 1 &  -- & $10 \cdot 10^{6}$ & 2 \\ \hline
823 & $16$ & 6.153 & 6.621 & 6.621 & 6.722 & 1 &  -- & $10 \cdot 10^{6}$ & 2 \\ \hline
827 & $16$ & 6.158 & 6.627 & 6.627 & 6.726 & 1 &  -- & $10 \cdot 10^{6}$ & 2 \\ \hline
829 & $16$ & 6.161 & 6.641 & 6.641 & 6.729 & 1 &  -- & $10 \cdot 10^{6}$ & 2 \\ \hline
839 & $16$ & 6.173 & 6.653 & 6.653 & 6.741 & 1 &  -- & $10 \cdot 10^{6}$ & 2 \\ \hline
853 & $16$ & 6.189 & 6.683 & 6.683 & 6.757 & 1 &  -- & $9.9 \cdot 10^{6}$ & 2 \\ \hline
857 & $16$ & 6.194 & 6.678 & 6.678 & 6.762 & 1 &  -- & $10 \cdot 10^{6}$ & 2 \\ \hline
859 & $16$ & 6.196 & 6.677 & 6.677 & 6.764 & 1 &  -- & $10 \cdot 10^{6}$ & 2 \\ \hline
863 & $16$ & 6.201 & 6.681 & 6.681 & 6.769 & 1 &  -- & $9.9 \cdot 10^{6}$ & 2 \\ \hline
877 & $16$ & 6.217 & 6.706 & 6.706 & 6.785 & 1 &  -- & $9.9 \cdot 10^{6}$ & 2 \\ \hline
\end{tabular}
\caption{Data for $601 \leq n \leq 877$, prime}
\label{table:6}
\end{table}

\begin{table}[h!]
\centering
\begin{tabular}{|c|c|c|c|c|c|c|c|c|c|}  \hline
$n$ & $\cC_{1,n}$ & $\log \cD_{\text{Chud},n}$ & $\log \cD_{1,n}$ & $\log \cD_{2,n}$ & $\log n\mu_{n}$ & $r_{1,\text{max}}$ & $r_{2,\text{max}}$ & $r_{\text{comp}}$ & $N$ \\ \hline \hline
881 & $16$ & 6.221 & 6.710 & 6.710 & 6.789 & 1 &  -- & $10 \cdot 10^{6}$ & 2 \\ \hline
883 & $16$ & 6.223 & 6.723 & 6.723 & 6.792 & 1 &  -- & $9.9 \cdot 10^{6}$ & 2 \\ \hline
887 & $16$ & 6.228 & 6.723 & 6.723 & 6.796 & 1 &  -- & $10 \cdot 10^{6}$ & 2 \\ \hline
907 & $16$ & 6.250 & 6.751 & 6.751 & 6.818 & 1 &  -- & $10 \cdot 10^{6}$ & 2 \\ \hline
911 & $16$ & 6.254 & 6.761 & 6.761 & 6.823 & 1 &  -- & $10 \cdot 10^{6}$ & 2 \\ \hline
919 & $16$ & 6.263 & 6.774 & 6.774 & 6.831 & 1 &  -- & $10 \cdot 10^{6}$ & 2 \\ \hline
929 & $16$ & 6.274 & 6.789 & 6.789 & 6.842 & 1 &  -- & $10 \cdot 10^{6}$ & 2 \\ \hline
937 & $16$ & 6.282 & 6.806 & 6.806 & 6.850 & 1 &  -- & $10 \cdot 10^{6}$ & 2 \\ \hline
941 & $16$ & 6.287 & 6.816 & 6.816 & 6.855 & 1 &  -- & $10 \cdot 10^{6}$ & 2 \\ \hline
947 & $17$ & 6.293 & 6.813 & 6.813 & 6.861 & 1 &  -- & $10 \cdot 10^{6}$ & 2 \\ \hline
953 & $16$ & 6.299 & 6.827 & 6.827 & 6.867 & 1 &  -- & $10 \cdot 10^{6}$ & 2 \\ \hline
967 & $16$ & 6.314 & 6.848 & 6.848 & 6.882 & 1 &  -- & $9.9 \cdot 10^{6}$ & 2 \\ \hline
971 & $17$ & 6.318 & 6.847 & 6.847 & 6.886 & 1 &  -- & $10 \cdot 10^{6}$ & 2 \\ \hline
977 & $17$ & 6.324 & 6.857 & 6.857 & 6.892 & 1 &  -- & $10 \cdot 10^{6}$ & 2 \\ \hline
983 & $17$ & 6.330 & 6.867 & 6.867 & 6.898 & 1 &  -- & $10 \cdot 10^{6}$ & 2 \\ \hline
991 & $16$ & 6.338 & 6.889 & 6.889 & 6.906 & 1 &  -- & $10 \cdot 10^{6}$ & 1 \\ \hline
997 & $17$ & 6.344 & 6.884 & 6.884 & 6.912 & 1 &  -- & $10 \cdot 10^{6}$ & 2 \\ \hline
1009 & $17$ & 6.356 & 6.905 & 6.905 & 6.924 & 1 &  -- & $9.9 \cdot 10^{6}$ & 2 \\ \hline
\end{tabular}
\caption{Data for $881 \leq n \leq 1009$, prime}
\label{table:7}
\end{table}

\clearpage

\end{document}